\documentclass[1 [leqno,10pt]{amsart} 
\author{Stefano Scrobogna}
\title{On the global well-posedness of a  class of 2D solutions for the Rosensweig system of ferrofluids}

\usepackage[a4paper]{geometry}
\geometry{
 a4paper,
 total={170mm,257mm},
 left=20mm,
 right=20mm,
 top=20mm,
 bottom=20mm
 }

\usepackage{empheq}
\usepackage{times}
\usepackage[T1]{fontenc}
\usepackage{amssymb, amsmath,  bm}
\usepackage{amsfonts}
\usepackage[english]{babel}
\usepackage{amssymb,amsthm}
\usepackage{mathtools}
\DeclareMathAlphabet{\mathcal}{OMS}{cmsy}{m}{n}
\usepackage{hyperref}
\usepackage{cite}
\allowdisplaybreaks[1]
\usepackage[bbgreekl]{mathbbol}
\DeclareSymbolFontAlphabet{\mathbb}{AMSb}
\DeclareSymbolFontAlphabet{\mathbbl}{bbold}
\usepackage{xfrac}

\newcommand{\dx}{\textnormal{d}{x}}
\renewcommand{\d}{\textnormal{d}}
\renewcommand{\div}{\textnormal{div}}
\newcommand{\fine}{\hfill$\blacklozenge$}
\newcommand{\curl}{\textnormal{curl}}
\newcommand{\pare}[1]{\left( #1 \right)}

\newcommand{\norm}[1]{\left\| #1 \right\|}
\newcommand{\av}[1]{\left| #1 \right|}
\newcommand{\bra}[1]{\left[ #1 \right]}
\newcommand{\set}[1]{\left\{ #1 \right\}}
\newcommand{\Hud}{{H}^{\frac{1}{2}}\pare{\bR^2}}

\newcommand{\cP}{\mathcal{P}}
\newcommand{\pa}{\partial^\alpha}

\newcommand{\cC}{\mathcal{C}}
\newcommand{\cL}{\mathcal{L}}

\newcommand{\cQ}{\mathcal{Q}}
\newcommand{\cF}{\mathcal{F}}
\newcommand{\cJ}{\mathcal{J}}
\newcommand{\cD}{\mathcal{D}}
\newcommand{\cE}{\mathcal{E}}

\newcommand{\cR}{\mathcal{R}}
\newcommand{\bR}{\mathbb{R}}
\newcommand{\bN}{\mathbb{N}}
\newcommand{\cG}{\mathcal{G}}

\newcommand{\ps}[2]{\pare{ \left. #1 \ \right| \ #2 }}

\newcommand{\hra}{\hookrightarrow}

\newcommand{\loc}{\textnormal{loc}}
\newcommand{\NS}{Navier-Stokes }

\theoremstyle{theorem}
\newtheorem{theorem}{Theorem}[section]
\newtheorem*{theorem*}{Theorem}
\newtheorem{prop}[theorem]{Proposition}
\newtheorem{lemma}[theorem]{Lemma}

\theoremstyle{definition}
\newtheorem{definition}[theorem]{Definition}
\newtheorem{rem}[theorem]{Remark}
\newtheorem{hyp}{Hypothesis}

\numberwithin{equation}{section}

\begin{document}

\AtEndDocument{\bigskip{\footnotesize
  \textsc{BCAM - Basque Center for Applied Mathematics,Mazarredo, 14,  E48009 Bilbao, Basque Country -- Spain} \par
  \textit{E-mail address:}  \texttt{\href{mailto:sscrobogna@bcamath.org}{sscrobogna@bcamath.org}}}}
  
  \thanks{The research of the  author was partially supported by the Basque Government through the BERC 2014-2017 program and by the Spanish Ministry of Economy and Competitiveness MINECO: BCAM Severo Ochoa accreditation SEV-2013-0323.}

 \maketitle
 
 \begin{abstract}
 We study study a class of 2D solutions of a Bloch-Torrey regularization of the Rosensweig system in the whole space, which arise when the initial data and the external magnetic field are 2D.  We prove that such solutions are globally defined if the initial data is in $ H^k\pare{\bR^2}, k\geqslant 1 $. 
 \end{abstract}

 \section{Introduction}

 A ferrofluid  is a liquid which presents ferromagnetic properties, i.e. it becomes strongly magnetizable in presence of an external magnetic field. Such material do not exist naturally in the environment but it was created in 1963 by NASA \cite{stephen1965low} with a very specific goal: to be used as a fuel for rockets in an environment without gravity, whence the necessity to be pumped applying a magnetic field. \\
 
 The versatility of such material and its peculiar property of being controlled via a magnetic field made it suitable to be later used in a whole variety of applications: ferrofluids are for instance used in loudspeakers in order to cool the coil and damp the cone \cite{Miwa2003}, as seals in magnetic hard-drives \cite{raj1982ferrofluid}, in order to reduce friction \cite{Huang2011} or enhance heat transfer \cite{LAJVARDI20103508}, \cite{Sheikholeslami2015}. We refer the interested reader to \cite{Zahn2001}, the introduction of \cite{NST2016} and references therein for a survey of potential applications of ferrofluids. \\
 
 Ferrofluids are collidal\footnote{A mixture in which one substance of microscopically dispersed insoluble particles is suspended throughout another substance.} made of nanoscale ferromagnetic particles of a compound containing iron, suspended in a fluid. They are magnetically soft, which means that they do not retain magnetization once there is no external magnetic field acting on them. \\
 
 On a physical point of view ferrofluids (FF) are very different from magnetohydrodynamical (MHD) fluids: the former are magnetizable fluids with very low electrical conductivity while the latter are nonmagnetizables and electrically conducting. There are two generally accepted models describing the evolution of a FF which are known under the name of their developers, the Rosensweig model \cite{Rosensweig}, and the Shiliomis model \cite{shiliomis1975non}. The mathematical analysis of such systems is very recent, in \cite{AH_Shilomis_weak},  \cite{AH_Shilomis_strong}, \cite{AH_Rosensweing_strong} and \cite{AH_Rosensweing_weak} it is proved that both Shiliomis and Rosensweig model admit global weak and local strong solutions in bounded, smooth subdomains of $ \bR^3 $.  The same authors then considered as well thermal and electrical conductivity as well as steady-state solutions of various ferrofluids systems in  \cite{AH12-2}, \cite{AH12}, \cite{AH13}, \cite{AH14}, \cite{AH15}, \cite{AH16} and \cite{HHl16}. \\

In the present paper we will consider the following regularization of Bloch-Torrey type of the Rosensweig model for homogeneous micropolar fluids 

\begin{equation}\label{eq:Rosensweig}\tag{$ \mathcal{R} $}
\left\lbrace
\begin{aligned}
& \rho_0 \pare{\partial_t u +u\cdot \nabla u } -\pare{\eta + \zeta} \Delta u + \nabla p = \mu_0 M\cdot \nabla H  + 2\zeta\ \curl \ \Omega, \\
& \rho_0 \kappa \pare{\partial_t \Omega +u\cdot \nabla \Omega} - \eta ' \Delta \Omega - \lambda' \nabla \div \ \Omega = \mu_0 \ M\times H + 2\zeta \pare{\curl\ u -2\Omega}, \\
& \partial_t M +u\cdot \nabla M - \sigma \Delta M = \Omega \times M -\frac{1}{\tau} \pare{M-\chi_0 H}, \\
& \div\pare{H+ M}=F, \\
& \div \ u = \curl \ H =0, \\
& \left. \pare{u, \omega, M, H}\right|_{t=0}= \pare{u_0, \Omega_0, M_0, H_0 } ,
\end{aligned}
\right.
\end{equation}
where the parameters $ \rho_0, \eta, \zeta, \mu_0, \kappa, \eta', \lambda', \sigma, \tau $ and $ \chi_0 $ have a physical meaning and are considered to be fixed and positive. The unknown $ u $ represents the linear velocity wile $ \Omega $ represents the angular velocity, $ M $ is the magnetizing  field and $ H $ is the effective magnetizing field. The equation 
$$ \div\pare{H+M}=F $$
 will be often denoted as the \textit{magnetostatic equation}. 
The parameter $ \sigma > 0 $ comes in play  when the diffusion of the spin magnetic moment is not negligible, we refer the reader to \cite{GaspariBloch}, and indeed it has a regularizing effect since in such regime the system \eqref{eq:Rosensweig} is purely parabolic. \\

The constant $ \chi_0 $ is a dimensionless value called magnetic susceptibility, for oil-based fluids (see \cite{RZ02}) usually $ \chi_0\in \bra{0.3, 4.3} $ while for water-based fluids $ 0<\chi_0 \ll 1 $. The critical value $ \chi_0 =0 $ implies that the medium is not magnetizable and hence there is not external magnet force exerted on the fluid. \\

The equations \eqref{eq:Rosensweig} are derived under the following hypothesis (see \cite{Rosensweig2002})
\begin{itemize}
\item[$ \triangleright $] The ferromagnetic particles suspended in the carrier fluid are spherical, 

\item[$ \triangleright $] The ferromagnetic particles have the same size and mass, 

\item[$ \triangleright $] The density of the ferromagnetic particles in the colloidal is homogeneous, 

\item[$ \triangleright $] No agglomeration effects are considered.
\end{itemize}

 The equations \eqref{eq:Rosensweig} are considered in the whole three-dimensional space in $ \bR^3\times \bR_+ $, and we assume that
\begin{equation}\label{eq:hypF}\tag{H1}
F=F\pare{x_1, x_2, t}, 
\end{equation}
i.e. the external magnetic field is independent of the vertical variables. 
 In such setting we are going to consider special solutions of \eqref{eq:Rosensweig} of the form
\begin{equation}\label{eq:special_solutions}\tag{H2}
\begin{aligned}
u & = \pare{u_1\pare{x_1, x_2, t}, u_2\pare{x_1, x_2, t}, 0}, \\
\Omega & = \pare{ 0, 0, \omega \pare{x_1, x_2, t}}, \\
M & = \pare{M_1\pare{x_1, x_2, t}, M_2\pare{x_1, x_2, t}, 0}. 
\end{aligned}
\end{equation}

With the hypothesis \eqref{eq:hypF} and  \eqref{eq:special_solutions} the system \eqref{eq:Rosensweig} becomes:
\begin{equation}\label{eq:Rosensweig2D}\tag{$ \mathcal{R}_{\text{2D}} $}
\left\lbrace
\begin{aligned}
& \rho_0 \pare{\partial_t u +u\cdot \nabla u } -\pare{\eta + \zeta} \Delta u + \nabla p = \mu_0 M\cdot \nabla H + 2\zeta \pare{\begin{array}{c}
\partial_2 \omega\\ -\partial_1\omega
\end{array}}, \\
& \rho_0 \kappa \pare{\partial_t \omega + u\cdot \nabla \omega} - \eta' \Delta \omega = \mu_0 M\times H + 2\zeta \pare{\curl \ u -2\omega}, \\
& \partial_t M + u\cdot \nabla M - \sigma \Delta M = \pare{
\begin{array}{c}
-M_2\\ M_1
\end{array}
}\omega -\frac{1}{\tau}\pare{M-\chi_0 H}, \\
& \div\pare{H+ M}=F, \\
& \div \ u = \curl \ H =0, \\
& \left. \pare{u, \omega, M, H}\right|_{t=0}= \pare{u_0, \omega_0, M_0, H_0 }. 
\end{aligned}
\right.
\end{equation}

\begin{rem}
We underline the fact that the symbols $ \Delta, \nabla $ do \textit{not} represent the same operators in\eqref{eq:Rosensweig} and \eqref{eq:Rosensweig2D}; in the former they represent respectively the three-dimensional Laplacian and gradient while in the latter they represent the bi-dimensional Laplacian and gradient. Only thanks to the hypothesis \eqref{eq:hypF} and  \eqref{eq:special_solutions} we can perform the identification of \eqref{eq:Rosensweig} and \eqref{eq:Rosensweig2D}. In order to avoid confusion we explicitly define here the operators appearing in \eqref{eq:Rosensweig2D}, even though they are nothing but the standard three dimensional operators restricted onto the space of functions satisfying the hypothesis \eqref{eq:hypF} and  \eqref{eq:special_solutions}. From now on the symbols $ \Delta, \nabla $ represent respectively the operators
\begin{align*}
\Delta = \partial_1^2+\partial_2^2, &&
\nabla = \pare{\begin{array}{c}
\partial_1 \\ \partial_2
\end{array}},
\end{align*}
and the transport form is defined as
\begin{equation*}
u\cdot \nabla A = \sum_{i=1}^2 u_i\partial_i A. 
\end{equation*}
In the same spirit the vector product is identified as the bilinear form
\begin{equation*}
\pare{A, B}\in \bR^2\times \bR^2 \mapsto A\times B = -A_1B_2 + A_2 B_1 \in \bR,
\end{equation*}
and the curl operator is the following operator
\begin{equation*}
\curl \ u = -\partial_2 u_1 + \partial_1 u_2, 
\end{equation*}
while given any vector field $ v = \pare{v_1, v_2}^{\intercal} $ we define as $ v^\perp = \pare{-v^2, v^1}^\intercal $. \fine
\end{rem}

\section{Results and notation}

\subsection{Main result and organization of the paper}  The following statement codifies the main result presented in this paper which concerns the global well-posedness of solutions in the form \eqref{eq:special_solutions} for the system \eqref{eq:Rosensweig2D}:

\begin{theorem}\label{thm:main result}
Let $ u_0, \omega_0, M_0, H_0\in H^k\pare{\bR^2} $ for some $ k\geqslant 1 $ such that $ \div\ u_0=0 $ and $ \div \pare{M_0\pare{x}+H_0\pare{x}} = F\pare{x, 0} $ and let $ \cG_F = \Delta^{-1}\nabla F \in W^{1, \infty}_{\loc}\pare{\bR_+; H^{k+1}\pare{\bR^2}} $, $ F\in L^2_{\loc}  \pare{\bR_+; L^2\pare{\bR^2}} $. The system \eqref{eq:Rosensweig2D} admits a unique global strong solution in  $ \cC\pare{\bR_+ ;H^k\pare{\bR^2} } $ which enjoys the following additional regularity
\begin{align*}
u, \omega, M, H\in L^\infty \pare{\bra{0, T};H^k\pare{\bR^2} }, &&
\nabla u, \nabla \omega,\nabla M,\nabla H\in L^2\pare{\bra{0, T};H^k\pare{\bR^2} }, 
\end{align*} 
for each $ T>0 $. 
\end{theorem}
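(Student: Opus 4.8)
The plan is to recast \eqref{eq:Rosensweig2D} as a semilinear parabolic system in the single triple $\pare{u,\omega,M}$ by eliminating the effective field $H$, and then to run the classical scheme: local existence by a fixed point argument, global a priori bounds via an energy estimate exploiting a cancellation between the magnetic force and torque, and continuation. First I would solve the magnetostatic equations for $H$: since $\curl H=0$ we have $H=\nabla\phi$, and $\div\pare{H+M}=F$ gives $\Delta\phi=F-\div M$, so that
\[
H=\cG_F-\nabla\Delta^{-1}\div M=\cG_F+\pare{\bP-I}M,
\]
where $\bP=I-\nabla\Delta^{-1}\div$ is the Leray projector. Thus $H$ is an affine, zeroth--order (Calderón--Zygmund) function of $M$ losing no derivatives, $\norm{H}_{H^s}\lesssim\norm{M}_{H^s}+\norm{\cG_F}_{H^s}$ for every $s$. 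Substituting this and applying $\bP$ to the momentum equation to remove $\nabla p$, the system takes the form $\partial_t V-\mathcal{L}V=\mathcal{N}\pare{V}+\mathcal{F}$, with $V=\pare{u,\omega,M}$, $\mathcal{L}$ a diagonal second--order elliptic operator carrying the positive diffusivities $\eta+\zeta,\eta',\sigma$, $\mathcal{N}$ a sum of quadratic terms each with at most one derivative (the transports $u\cdot\nabla\,\cdot$, the force $M\cdot\nabla H$, the torque $M\times H$ and the rotation $M^\perp\omega$), and $\mathcal{F}$ a forcing built from $\cG_F$ and $F$.

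For local existence and uniqueness I would set up a contraction in $E_T=\cC\pare{\bra{0,T};H^k}\cap L^2\pare{\bra{0,T};H^{k+1}}$ via Duhamel's formula with the heat semigroups generated by $\mathcal{L}$. Since $k\geqslant 1$ and the dimension is $2$, the quadratic first--order nonlinearities map $E_T$ into $L^2\pare{\bra{0,T};H^{k-1}}$, the correct source space for the inhomogeneous heat flow, the borderline product estimates at $k=1$ being covered by the gain of one derivative in the dissipation. A standard Banach fixed point then yields a unique solution on a maximal interval $\bra{0,T^\ast}$, with $T^\ast$ bounded below in terms of $\norm{V_0}_{H^k}$ alone, and uniqueness follows from an energy estimate on the difference of two solutions.

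The heart of the argument is a closed global a priori estimate, and the key is the choice of test functions (all spatial norms below being $L^2\pare{\bR^2}$). Testing the momentum equation with $u$, the angular equation with $\omega$, and---crucially---the $M$--equation with $\mu_0 H+\beta M$ for a constant $\beta>\mu_0$, the two genuinely critical cubic terms cancel: using $\div u=0$ and $\curl H=0$,
\[
\mu_0\int_{\bR^2}\pare{u\cdot\nabla M}\cdot H\ \dx+\mu_0\int_{\bR^2}\pare{M\cdot\nabla H}\cdot u\ \dx=0,
\]
\[
\mu_0\int_{\bR^2}\pare{M^\perp\omega}\cdot H\ \dx+\mu_0\int_{\bR^2}\pare{M\times H}\,\omega\ \dx=0,
\]
while $\int\pare{u\cdot\nabla M}\cdot M=0$ and $M^\perp\cdot M=0$ kill the $\beta M$ contributions. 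The resulting identity controls the positive--definite energy $\frac{\rho_0}{2}\norm{u}^2+\frac{\rho_0\kappa}{2}\norm{\omega}^2+\frac{\beta}{2}\norm{M}^2-\frac{\mu_0}{2}\norm{\pare{I-\bP}M}^2$ together with the full dissipation of $\nabla u,\nabla\omega,\nabla M$; the micropolar coupling $2\zeta\pare{\curl u-2\omega}$ is treated as in the standard micropolar theory, and the $\cG_F,F$ forcing is absorbed using their stated integrability. A Grönwall argument then yields global $L^\infty_t L^2\cap L^2_t H^1$ bounds. I would propagate these to $H^k$ by applying $\pa$, $\av{\alpha}\leqslant k$, using the same algebraic cancellation at top order up to commutators estimated by the Kato--Ponce/Moser inequalities and the two--dimensional Gagliardo--Nirenberg (Ladyzhenskaya) inequality $\norm{f}_{L^4}^2\lesssim\norm{f}_{L^2}\norm{\nabla f}_{L^2}$ to absorb all top--order contributions into the dissipation, and Grönwall bounds $\norm{V\pare{t}}_{H^k}$ on every $\bra{0,T}$.

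The hard part will be exactly this energy step: identifying the test function $\mu_0 H+\beta M$ that makes the critical magnetic force and torque cancel---with the naive choice $M$ one is left with a critical cubic term $\mu_0\int\pare{M\cdot\nabla H}\cdot u$ that cannot be absorbed into the dissipation for large data---and then verifying that this cancellation survives differentiation so that the $H^k$ estimate closes against only commutator remainders. Once the global $H^k$ bound is in hand, the lower bound on $T^\ast$ in terms of $\norm{V}_{H^k}$ forces $T^\ast=\infty$, producing the global solution in $\cC\pare{\bR_+;H^k\pare{\bR^2}}$ with the stated parabolic regularity.
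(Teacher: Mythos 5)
Your overall architecture --- eliminate $H$ through $H=\cG_F-\cQ M$, local existence by a contraction argument in $\cC\pare{\bra{0,T};H^k}\cap L^2\pare{\bra{0,T};H^{k+1}}$, global a priori bounds, continuation --- is a legitimate route and genuinely different from the paper's, which instead constructs global weak solutions by a frequency-truncated Galerkin scheme plus Aubin--Lions compactness in $\Hud$ (Proposition \ref{prop:existence_global_weak_H12_solutions}) and then propagates $H^k$ regularity by induction (Proposition \ref{prop:propagation_higher_regularity}). However, the step you yourself call the heart of the argument fails as written, because of a sign error in the test function: it must be $-\mu_0 H+\beta M$, not $+\mu_0 H+\beta M$. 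Testing the $M$--equation with $+\mu_0 H$ places $+\mu_0\ps{u\cdot\nabla M}{H}_{L^2}$ on the left-hand side of that identity, while testing the momentum equation with $u$ places $+\mu_0\ps{M\cdot\nabla H}{u}_{L^2}$ on the right-hand side; since $\ps{M\cdot\nabla H}{u}_{L^2}=-\ps{u\cdot\nabla M}{H}_{L^2}$, summing the two balances leaves
\begin{equation*}
\mu_0\ps{u\cdot\nabla M}{H}_{L^2}-\mu_0\ps{M\cdot\nabla H}{u}_{L^2}=2\,\mu_0\ps{u\cdot\nabla M}{H}_{L^2}
\end{equation*}
on the left: the critical cubic term \emph{doubles} instead of cancelling. (Your two displayed identities are correct as identities; they are simply not the combination your test function produces.) The same mismatch affects the torque pair: with the convention under which \eqref{eq:Rosensweig2D} is the reduction of \eqref{eq:Rosensweig}, namely $M\times H=M_1H_2-M_2H_1$, one has $\ps{M\times H}{\omega}_{L^2}=\ps{M^\perp\omega}{H}_{L^2}$, so with your sign these two right-hand-side terms add rather than cancel. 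Two further symptoms of the same error: the viscous term enters as $-\mu_0\sigma\ps{\Delta M}{H}_{L^2}=-\mu_0\sigma\norm{\div M}_{L^2}^2+\mu_0\sigma\int \div M\, F\,\dx$ on the left, i.e.\ \emph{anti}-dissipatively, and your energy carries the indefinite correction $-\tfrac{\mu_0}{2}\norm{\pare{I-\cP}M}_{L^2}^2$, forcing the artificial restriction $\beta>\mu_0$. With $-\mu_0H+\beta M$ all of this disappears: both cancellations hold, the dissipation $+\mu_0\sigma\norm{\div M}_{L^2}^2$ has the correct sign, and the energy becomes $\tfrac{\rho_0}{2}\norm{u}_{L^2}^2+\tfrac{\rho_0\kappa}{2}\norm{\omega}_{L^2}^2+\tfrac{\beta}{2}\norm{M}_{L^2}^2+\tfrac{\mu_0}{2}\norm{\cQ M}_{L^2}^2$, positive definite for every $\beta>0$; this is exactly the bookkeeping of Lemma \ref{lem:L2_energy_estimates}, whose conserved quantity carries $+\mu_0\norm{H}_{L^2}^2$.

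A second, smaller gap: the claim that the cancellation ``survives differentiation so that the $H^k$ estimate closes against only commutator remainders'' is precisely what Remark \ref{rem:propagation_HE} warns against. Applying $\pa$ and testing with $-\mu_0\pa H$, the cross terms $\ps{M\cdot\nabla\pa H}{\pa u}_{L^2}$ and $\ps{\pa u\cdot\nabla M}{\pa H}_{L^2}$ do cancel, but the term $\ps{u\cdot\nabla\pa M}{\pa H}_{L^2}$ has no partner: since $M$ is not divergence-free, $\ps{u\cdot\nabla\pa M}{\cQ\pa M}_{L^2}\neq 0$, and this is not a commutator. It \emph{is} absorbable by a direct estimate, $\av{\ps{u\cdot\nabla\pa M}{\cQ\pa M}_{L^2}}\leqslant\norm{u}_{L^\infty}\norm{\nabla\pa M}_{L^2}\norm{\pa M}_{L^2}$, and this is how the paper actually proceeds: at order $k$ it abandons the cancellation altogether and bounds every term brutally (Lemmas \ref{lem:higher_energy_est_transport_term} and \ref{lem:higher_energy_est_bilinear_term}), absorbing all top-order derivatives into the dissipation with coefficients controlled by the inductively known lower-order norms. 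So the repair is: flip the sign to close the $L^2$ (and, if you follow your fixed-point route, the $H^k$) estimate, and replace ``cancellation plus commutators'' at high order by the direct absorption argument; with those corrections your contraction-plus-continuation scheme does deliver the theorem, trading the paper's compactness construction for a fixed-point one.
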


\begin{rem}
The choice of an initial data in $ H^k\pare{\bR^2}, \ k\geqslant 1 $ is due to technical reasons. We expect to be able to prove global propagation of any Sobolev regularity when the initial data belongs to $ L^2\pare{\bR^2} $, obtaining an analogous result of what is already known for the 2D incompressible \NS equations. Such result will be the subject of future investigation. \fine
\end{rem}

As it is often the case in the study of existence and regularity for complex fluids the main difficulty in the present paper is the global analysis of the perturbations induced by the many nonlinear interactions of \eqref{eq:Rosensweig2D}. At first hence we study the natural $ L^2\pare{\bR^2} $--energy decay for smooth solutions of \ref{eq:Rosensweig2D}. Adapting the techniques of \cite{AH_Rosensweing_weak} to the present setting and exploiting some cancellation properties it is hence possible to prove that smooth, decaying at infinity solutions of \eqref{eq:Rosensweig2D} propagate globally $ L^2\pare{\bR^2} $ regularity. Unfortunately, contrarily to the incompressible \NS equations, such result is not sufficient in order to construct global-in-time $ L^2 $ solutions by mean of compactness methods. In fact a standard way to construct global weak solutions is to prove that, given a sequence $ \pare{U_n}_n $, 
\begin{align*}
\pare{U_n}_n & \hspace{5mm}\text{ is bounded in } L^p\pare{\left[0, T\right); X_0}, \\
\pare{\partial_t U_n}_n & \hspace{5mm}\text{ is bounded in } L^p\pare{\left[0, T\right); X_1}, 
\end{align*}
for a $ p\in\pare{1, \infty} $, any $ T>0 $ and $X_0\hra X_1 $, hence if there exists some space $ X $ such that\footnote{In such notation $ Z\Subset Y $ means that $ Z $ is compactly embedded in $ Y $. }
\begin{equation*}
X_0\Subset X \hra X_1, 
\end{equation*}
it is possible to apply Aubin-Lions lemma \cite{Aubin63} in order to deduce that the sequence $ \pare{U_n}_n $ is compact in $ L^p\pare{\left[0, T\right); X} $. Whence, if such bounds are proved to be true, a passage to the limit as $ n\to\infty $ concludes the construction. In the case of Rosensweig system though, as it was already remarked in \cite{AH_Rosensweing_weak}, the Lorentz force $ \textsf{F}^m = \mu_0 \ M\cdot\nabla H $ is only $ L^1\pare{\left[0, T\right); H^{-\sfrac{1}{2}}\pare{\bR^2}} $, i.e. the Lorentz force has not sufficiently regularity in-time in order to apply such technique. A way to bypass such problem is hence to construct global weak solutions in $ \Hud $; if such global bounds can be attained we can mange hence to prove that $ \textsf{F}^m \in  L^2\pare{\left[0, T\right); L^2\pare{\bR^2}}$, whose regularity in time is  hi enough in order to deduce existence of global weak solutions by means of compactness methods. \\
\noindent Next we investigate if these weak solutions constructed are sufficiently regular  to deduce global propagation of any Sobolev regularity. The answer is indeed affirmative, and the proof of such result is performed via an iterative argument; given a $ k\in\bN\setminus\set{0} $ we suppose that the system \eqref{eq:Rosensweig2D} is globally well-posed in $ H^j\pare{\bR^2}, \ j\in\set{0, \ldots , k-1} $ and we prove that the global propagation holds true as well in $ H^k\pare{\bR^2} $. Indeed if $ k=0\Rightarrow H^k=L^2 $ the propagation is true thanks to the global $ L^2 $--estimates. The main tool in order to prove such inductive argument are the technical estimates performed in Lemma \ref{lem:higher_energy_est_transport_term} and \ref{lem:higher_energy_est_bilinear_term}. \\

The proof of Theorem \ref{thm:main result} is divided in two parts such as in others works describing  global regularity of two-dimensional complex fluids systems (we refer for instance to  \cite{PZ12}, \cite{LZZ08}, \cite{LLW10}, \cite{DeAnna17}); at first, using a Galerkin approximation scheme, it is possible to prove the existence of global weak solutions. Next, assuming the initial data fulfills the regularity requirements stated in Theorem \ref{thm:main result}, we prove that the system \ref{eq:Rosensweig2D} propagates globally Sobolev regularity of any order greater or equal than one.\\

The paper is structured as follows

\begin{itemize}

\item Section \ref{sec:preliminaries} is a brief introduction to more or less well-known technical results and notation which will be used all along the present work.

\item In Section \ref{sec:apriori_estimates} we perform some a priori estimates on sufficiently smooth and decaying at infinity solutions of \eqref{eq:Rosensweig2D} in the same spirit as in \cite{AH_Rosensweing_weak} (and as well \cite{AH_Shilomis_weak} and \cite{PZ12} for some different systems). In detail we prove that such regularized solutions conserve globally $ L^2 $ energy thanks to some cancellation properties first remarked in \cite{AH_Rosensweing_weak} in the framework of bounded and smooth domains of $ \bR^3 $ and here adapted to our framework. Next we prove in Lemma \ref{lem:H12_energy_estimates} that as long as the hypothesis for the conservation of the $ L^2 $ energy are satisfied, and if the initial data is more regular (namely $ \Hud $), then the global propagation of energy can be extended to the $ \Hud $ level as well. We focus to prove the global propagation of the $ \Hud $ energy since such step will be required in order to construct global weak solutions in $ H^{\frac{1}{2}-\varepsilon}\pare{\bR^2}, \ \varepsilon>0 $, providing hence global weak solutions with high regularity.  

\item  In Section \ref{sec:weak_solutions} we construct global weak solutions of \eqref{eq:Rosensweig2D} under the stronger hypothesis of an initial data in $ \Hud $. It is hence in this section that this higher regularity (compared to classical Leray solutions, cf. \cite{Leray} or \cite{monographrotating}) assumption on the initial data is explained. The Lorentz force $ \textsf{F}^m = \mu_0 \ M\cdot \nabla H $ can be bounded in the space $ L^1_{\loc}\pare{\bR_+; H^{-\frac{1}{2}}\pare{\bR^2}} $ only with the bounds provided by the global conservation of energy at a $ L^2 $ level only (i.e. with the results of Lemma \ref{lem:L2_energy_estimates}). Such time-regularity is hence not sufficient in order to apply standard compactness theorems in functional spaces (such as the one in \cite{Aubin63}), whence the requirement of an initial data in $ \Hud $, which is again non-restrictive since the goal is to construct global \textit{strong} solutions for \eqref{eq:Rosensweig2D}. 

\item Lastly in Section \ref{sec:HE} we prove that, considered an initial data in $ H^k $ and an external magnetic field $ F $ sufficiently regular, we can propagate globally-in-time such Sobolev regularity. Such result is not a completely trivial deduction as it is pointed out in Remark \ref{rem:propagation_HE}; again the Lorentz force $ \textsf{F}^m $ lacks  the commutation properties which are characteristics for transport terms with isochoric velocity fields, whence a more careful energy bound, whose key feature is an iterative proof relying on the technical Lemmas \ref{lem:higher_energy_est_transport_term} and \ref{lem:higher_energy_est_bilinear_term}, is required. 

\end{itemize}

\subsection{Preliminaries and notation} \label{sec:preliminaries}
From now on for any Lebesgue or Sobolev space whose domain  $\Omega \subseteq \bR^d $ is not explicitly defined  it will be implicitly considered to be $ \Omega = \bR^2 $.  \\

All along this paper, given  a $ v $ such that $ \hat{v}\in L^1_{\loc}\pare{\bR^2} $ we define the family of operators $ \pare{\Lambda^s}_{s \in \bR} $ as
\begin{equation*}
\Lambda^{s} v = \cF^{-1}\pare{\Big. \av{\xi}^{s}\hat{v}\pare{\xi}}. 
\end{equation*}
Using such family of operators we can hence define the \textit{nonhomogeneous fractional Sobolev} space $ H^s\pare{\bR^2} $ as the space of tempered distributions $ v $ such that $ \hat{v}\in L^2_{\loc}\pare{\bR^2} $ and such that
\begin{equation*}
\pare{1+\Lambda}^s v \in L^2\pare{\bR^2}. 
\end{equation*}

\noindent There exists as well an \textit{homogeneous} counterpart of the fractional Sobolev space which consists of all the tempered distributions  $ \hat{v}\in L^1_{\loc}\pare{\bR^2} $ such that $ \Lambda^s v \in L^2\pare{\bR^2} $. In order to avoid notational confusion between homogeneous and nonhomogeneous Sobolev spaces we denote the former as $ \Lambda^s L^2\pare{\bR^2} $. For a much deeper discussion on homogeneous Sobolev spaces and their properties we refer the reader to \cite[Section 1.3]{BCD} and references therein.  \\

Given a vector field $ v = \pare{v_1, \ldots, v_N} $ for any $ N\in\bN $ we denote as $ \nabla v $ the Jacobian matrix of $ v $ i.e.
\begin{equation*}
\nabla v =\pare{\Big. \partial_i v_j}_{\substack{i=1, 2 \\ j =1, \ldots , N}}. 
\end{equation*}

\noindent
It is of interest to notice that if we define 
\begin{equation*}
\norm{\nabla v}_{L^2}^2 = \sum_{i=1}^2 \sum_{j=1}^N \int \av{\partial_i v_j\pare{x}}^2\dx, 
\end{equation*}
there exists a $ K>0 $  such that for any $ v\in \Lambda L^2\pare{\bR^2} $
\begin{equation*}
\frac{1}{K}\norm{\Lambda v}_{L^2} \leqslant \norm{\nabla v}_{L^2} \leqslant K \norm{\Lambda v}_{L^2}, 
\end{equation*}
we shall use such property continuously in what follows and, by extension, we will identify for any $ k\in\bN $ the equivalent quantities
\begin{align*}
\norm{\pare{1+\Lambda}^k v}_{L^2} && \text{and} && \pare{\sum_{\av{\alpha}=0}^k \norm{\pa v}_{L^2}^2}^{\sfrac{1}{2}},
\end{align*} 
where $ \pa $ is a differential operator of the form $ \pa = \partial_1^{\alpha_1} \partial_2^{\alpha_2} $ where $ \alpha =\pare{ \alpha_1,\alpha_2} $. \\

A point of interest is to understand how, given a $ \sigma \in \bR $, the operator $ \Lambda^\sigma $ acts  on a product of tempered distributions. The following result, which belongs to the mathematical folklore (see \cite{gallagher_schochet}, \cite{Scrobo_Froude_FS} just to make an example),  gives a very simple criterion:

\begin{lemma}\label{lem:prod_rules_Sobolev_2D}
Let $ s, t $ real values such that $ s, t < 1 $ and $ s+t >0 $, and let $ \Lambda^s u_1\in L^2\pare{\bR^2}, \Lambda^t u_2 \in L^2\pare{\bR^2} $, then $ \Lambda^{s+t-1} \pare{u_1 u_2} \in  L^2\pare{\bR^2} $. 
\end{lemma}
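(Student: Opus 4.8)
The lemma states: for real $s, t < 1$ with $s + t > 0$, if $\Lambda^s u_1 \in L^2(\mathbb{R}^2)$ and $\Lambda^t u_2 \in L^2(\mathbb{R}^2)$, then $\Lambda^{s+t-1}(u_1 u_2) \in L^2(\mathbb{R}^2)$.

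This is a product estimate in homogeneous Sobolev spaces in 2D. Let me think about how to prove it.

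The natural approach uses paraproducts / Littlewood-Paley, or Fourier analysis directly. The "loss of one derivative" (the $-1$) is characteristic of dimension $d=2$: the general formula is $\Lambda^{s+t-d/2}$ for $\dot{H}^s \cdot \dot{H}^t \to \dot{H}^{s+t-d/2}$ when $s, t < d/2$ and $s+t > 0$. Here $d = 2$, so $d/2 = 1$.

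Let me think about the standard proof via Littlewood-Paley decomposition and Bony's paraproduct.\textbf{Proof strategy.} The statement is the two-dimensional instance of the general product law $\dot H^s \cdot \dot H^t \hookrightarrow \dot H^{s+t-d/2}$, valid for $s,t<d/2$ and $s+t>0$, specialized to $d=2$ (so $d/2=1$). The natural tool is the Littlewood–Paley decomposition together with Bony's paraproduct. The plan is to write $u_1 u_2 = T_{u_1}u_2 + T_{u_2}u_1 + R(u_1,u_2)$, where $T$ denotes the paraproduct and $R$ the remainder, and to estimate the homogeneous $\dot H^{s+t-1}$ norm of each piece separately, showing each lies in $L^2$.

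\textbf{Key steps.} First I would set up dyadic blocks $\Delta_j$ and recall that membership $\Lambda^s u_1 \in L^2$ is equivalent to $\big(2^{js}\norm{\Delta_j u_1}_{L^2}\big)_j \in \ell^2(\mathbb Z)$, and similarly for $u_2$ with exponent $t$. For the two paraproduct terms, say $T_{u_1}u_2 = \sum_j S_{j-1}u_1\,\Delta_j u_2$, I would use the Bernstein inequality in $\bR^2$ to control $\norm{S_{j-1}u_1}_{L^\infty}$: since $s<1=d/2$, the low-frequency sum $\norm{S_{j-1}u_1}_{L^\infty}\lesssim \sum_{k<j}2^{k(1-s)}\,2^{ks}\norm{\Delta_k u_1}_{L^2}$ converges and gains the factor $2^{j(1-s)}$ after summation, which is exactly where the hypothesis $s<1$ is used. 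This yields $2^{j(s+t-1)}\norm{\Delta_j(T_{u_1}u_2)}_{L^2}\in\ell^2$, hence $T_{u_1}u_2\in\dot H^{s+t-1}$; the term $T_{u_2}u_1$ is symmetric, using $t<1$. For the remainder $R(u_1,u_2)=\sum_{|j-j'|\le 1}\Delta_j u_1\,\Delta_{j'}u_2$, the frequency support forces summation of the spectra, and here I would apply Bernstein to turn the $L^1$-in-physical-space product bound into the correct $L^2$ frequency bound; the convergence of the resulting series is guaranteed precisely by $s+t>0$, since the remainder is the term where positivity of the total regularity is indispensable.

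\textbf{Main obstacle.} The delicate point is the remainder term $R(u_1,u_2)$. Because the two factors are frequency-localized at comparable scales, the output is spread over all lower frequencies, and one must sum $\sum_{q}2^{q(s+t-1)}\big(\sum_{j\ge q}\norm{\Delta_j u_1}_{L^2}\norm{\Delta_j u_2}_{L^\infty}\big)$ or a symmetric variant, applying Bernstein to pass from $\norm{\cdot}_{L^1}$ to $\norm{\cdot}_{L^2}$ at the cost of a $2^j$ factor in $\bR^2$. Getting the bookkeeping right so that the $\ell^2$ summability survives — and checking that it is exactly $s+t>0$ (not merely $s+t-1>-1$, which would be automatic) that rescues convergence — is the heart of the argument. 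Once all three pieces are shown to lie in $\dot H^{s+t-1}=\Lambda^{1-s-t}L^2$, equivalently $\Lambda^{s+t-1}(u_1u_2)\in L^2$, the proof concludes. I would also note that the condition $s+t-1$ could be negative here, so one genuinely works with the homogeneous space $\Lambda^{s+t-1}L^2$ as defined in the preliminaries, and the equivalence $\norm{\Lambda v}_{L^2}\sim\norm{\nabla v}_{L^2}$ recorded earlier is not needed, only the dyadic characterization of homogeneous norms.
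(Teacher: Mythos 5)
First, a point of comparison: the paper never proves this lemma --- it is invoked as folklore, with pointers to \cite{gallagher_schochet}, \cite{Scrobo_Froude_FS} (and it is of the type proved in \cite{BCD}) --- so there is no in-text argument to measure you against; your proof stands or falls on its own. The route you choose (Bony decomposition $u_1u_2 = T_{u_1}u_2+T_{u_2}u_1+R(u_1,u_2)$ plus the dyadic characterization of the homogeneous norms) is precisely the standard one in that literature, and your treatment of the two paraproduct terms is correct: Bernstein on the low-frequency factor gives $\norm{S_{j-1}u_1}_{L^\infty}\lesssim 2^{j(1-s)}\norm{\Lambda^s u_1}_{L^2}$, which is exactly where $s<1$ enters, and the $\ell^2$ bookkeeping in $j$ closes without difficulty.

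The flaw is in the remainder, at the very spot you flag as the heart of the argument. The bound you display, $\sum_{q}2^{q(s+t-1)}\pare{\sum_{j\ge q}\norm{\Delta_j u_1}_{L^2}\norm{\Delta_j u_2}_{L^\infty}}$, i.e.\ Bernstein ``at the cost of a $2^j$ factor'', does not close. Indeed, writing $a_j=2^{js}\norm{\Delta_ju_1}_{L^2}$ and $b_j=2^{jt}\norm{\Delta_ju_2}_{L^2}$ and using $\norm{\Delta_ju_2}_{L^\infty}\lesssim 2^{j}\norm{\Delta_ju_2}_{L^2}$, that expression equals $\sum_q\sum_{m\geqslant 0}2^{m(1-s-t)}a_{q+m}b_{q+m}$; in the relevant regime $s+t<1$ the weight $2^{m(1-s-t)}$ grows exponentially in $m$, while $(a_jb_j)_j\in\ell^1$ may decay arbitrarily slowly, so the series diverges in general. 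The correct move --- which your prose gestures at (``turn the $L^1$-in-physical-space product bound into the correct $L^2$ frequency bound'') but your formula contradicts --- is to apply Bernstein to the \emph{output} block: $\norm{\Delta_q\pare{\Delta_ju_1\,\widetilde{\Delta}_ju_2}}_{L^2}\lesssim 2^{q}\norm{\Delta_ju_1\,\widetilde{\Delta}_ju_2}_{L^1}\leqslant 2^{q}\norm{\Delta_ju_1}_{L^2}\norm{\widetilde{\Delta}_ju_2}_{L^2}$, so the gain is $2^q$ (output frequency), not $2^j$ (input frequency). With this, $2^{q(s+t-1)}\norm{\Delta_qR(u_1,u_2)}_{L^2}\lesssim\sum_{m\geqslant -C}2^{-m(s+t)}a_{q+m}b_{q+m}$, a convolution of the $\ell^1$ sequence $\pare{2^{-m(s+t)}}_{m\geqslant -C}$ --- this is exactly where $s+t>0$ is used --- with the $\ell^1$ sequence $(a_jb_j)_j$, hence an $\ell^1\subset\ell^2$ sequence in $q$, and the proof closes. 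With that single correction your argument is complete, modulo the routine caveat that the dyadic characterization of $\Lambda^\sigma L^2$ as a function space is legitimate here because all exponents involved ($s$, $t$, and $s+t-1$) are strictly below $d/2=1$.
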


A space of great interest in Section \ref{sec:apriori_estimates} will be the fractional $ \Lambda^{\frac{1}{2}}L^2\pare{\bR^2} $ space. In particular we will require the following interpolation inequality whose proof is a straightforward consequence of the continuous embedding of $ \Lambda^{\frac{1}{2}}L^2\pare{\bR^2} $ in $ L^4\pare{\bR^2} $ (see \cite[Chapter 1]{BCD}):
\begin{lemma}
Let $ u\in H^1\pare{\bR^2} $, the following chain of inequalities holds true
\begin{equation*}
\norm{u}_{L^4}\leqslant C_1 \norm{\Lambda^{\frac{1}{2}} u}_{L^2}\leqslant C_2 \norm{u}_{L^2}^{\sfrac{1}{2}} \norm{\nabla u}_{L^2}^{\sfrac{1}{2}}. 
\end{equation*}
\end{lemma}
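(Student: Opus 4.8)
The plan is to establish the two inequalities separately. The first, $\norm{u}_{L^4}\leqslant C_1 \norm{\Lambda^{\frac{1}{2}} u}_{L^2}$, is precisely the continuous Sobolev embedding $\Lambda^{\frac{1}{2}}L^2\pare{\bR^2}\hra L^4\pare{\bR^2}$ recalled in the statement and proved in \cite[Chapter 1]{BCD}; I would simply invoke it, the only point to verify being that $u\in H^1\pare{\bR^2}$ indeed guarantees $\Lambda^{\frac{1}{2}} u \in L^2\pare{\bR^2}$, which follows from the elementary bound $\av{\xi}\leqslant 1+\av{\xi}^2$ together with Plancherel's identity.

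For the second inequality I would argue on the Fourier side. By Plancherel's identity,
\begin{equation*}
\norm{\Lambda^{\frac{1}{2}} u}_{L^2}^2 = \int_{\bR^2} \av{\xi}\ \av{\hat u\pare{\xi}}^2 \d\xi = \int_{\bR^2} \av{\hat u\pare{\xi}}\cdot \av{\xi}\av{\hat u\pare{\xi}}\ \d\xi.
\end{equation*}
Applying the Cauchy--Schwarz inequality to the two factors $\av{\hat u\pare{\xi}}$ and $\av{\xi}\av{\hat u\pare{\xi}}$ gives
\begin{equation*}
\norm{\Lambda^{\frac{1}{2}} u}_{L^2}^2 \leqslant \pare{\int_{\bR^2}\av{\hat u\pare{\xi}}^2\d\xi}^{\sfrac{1}{2}}\pare{\int_{\bR^2}\av{\xi}^2\av{\hat u\pare{\xi}}^2\d\xi}^{\sfrac{1}{2}} = \norm{u}_{L^2}\ \norm{\Lambda u}_{L^2},
\end{equation*}
whence $\norm{\Lambda^{\frac{1}{2}} u}_{L^2}\leqslant \norm{u}_{L^2}^{\sfrac{1}{2}}\norm{\Lambda u}_{L^2}^{\sfrac{1}{2}}$. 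Finally I would convert $\norm{\Lambda u}_{L^2}$ into $\norm{\nabla u}_{L^2}$ by means of the norm equivalence $\frac{1}{K}\norm{\Lambda u}_{L^2}\leqslant \norm{\nabla u}_{L^2}\leqslant K\norm{\Lambda u}_{L^2}$ recorded earlier in the excerpt, absorbing the factor $K^{\sfrac{1}{2}}$ together with the embedding constant into $C_2$.

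The computation is entirely routine and presents no genuine obstacle; the only mild care needed is to ensure that every integral above is finite, which is guaranteed by the hypothesis $u\in H^1\pare{\bR^2}$, so that both $\hat u$ and $\av{\xi}\hat u$ are square-integrable and the Cauchy--Schwarz step is legitimate. The argument is in fact nothing but the standard logarithmic convexity of the homogeneous norms $s\mapsto\norm{\Lambda^{s}u}_{L^2}$, specialised to $s=\tfrac12$ sitting midway between $s=0$ and $s=1$.
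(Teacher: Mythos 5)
Your proof is correct and takes essentially the approach the paper intends: the first inequality is precisely the continuous embedding $\Lambda^{\sfrac{1}{2}}L^2\pare{\bR^2}\hra L^4\pare{\bR^2}$ cited from \cite[Chapter 1]{BCD}, and the second is the standard Plancherel--Cauchy--Schwarz interpolation (log-convexity of $s\mapsto\norm{\Lambda^s u}_{L^2}$) combined with the equivalence $\norm{\Lambda u}_{L^2}\simeq\norm{\nabla u}_{L^2}$ recorded in the paper. The paper itself gives no further detail, declaring the lemma a straightforward consequence of that embedding, so your write-up simply supplies the routine steps left implicit.
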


Next we state the following interpolation inequality which will be useful 

\begin{lemma}\label{lem:interpolation_inequality}
Let $ v \in H^{\sfrac{3}{2}} \pare{\bR^2} $. Then $ v\in L^\infty\pare{\bR^2} $ and there exist a $ C > 0 $ such that
\begin{equation}\label{eq:interpolation_inequality}
\norm{v}_{L^\infty\pare{\bR^2}}\leqslant C \norm{\Lambda^{\frac{1}{2}} v}_{L^2 \pare{\bR^2}}^{\sfrac{1}{2}} \norm{\Lambda^{\frac{3}{2}} v}_{L^2 \pare{\bR^2}}^{\sfrac{1}{2}}. 
\end{equation}
\end{lemma}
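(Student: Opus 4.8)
The plan is to prove the interpolation inequality
\[
\norm{v}_{L^\infty\pare{\bR^2}}\leqslant C \norm{\Lambda^{\frac{1}{2}} v}_{L^2 \pare{\bR^2}}^{\sfrac{1}{2}} \norm{\Lambda^{\frac{3}{2}} v}_{L^2 \pare{\bR^2}}^{\sfrac{1}{2}}
\]
by working directly on the Fourier side, since the $ L^\infty $ norm is controlled by the $ L^1 $ norm of the Fourier transform via the elementary inequality $ \norm{v}_{L^\infty}\leqslant \norm{\hat v}_{L^1} $. First I would split the frequency integral using a cutoff at a radius $ R>0 $ to be optimized at the end, writing
\[
\norm{\hat v}_{L^1}= \int_{\av{\xi}\leqslant R}\av{\hat v\pare{\xi}}\, \d\xi + \int_{\av{\xi}> R}\av{\hat v\pare{\xi}}\, \d\xi .
\]
On the low-frequency piece I would insert the weight $ \av{\xi}^{\frac{1}{2}} $ and apply Cauchy--Schwarz, using that in dimension $ 2 $ the function $ \av{\xi}^{-1} $ is locally integrable; this yields a bound of the form $ C R^{\frac{1}{2}}\norm{\Lambda^{\frac{1}{2}} v}_{L^2} $. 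Symmetrically, on the high-frequency piece I would insert the weight $ \av{\xi}^{\frac{3}{2}} $ and apply Cauchy--Schwarz, using that $ \av{\xi}^{-3} $ is integrable at infinity in dimension $ 2 $; this produces a bound of the form $ C R^{-\frac{1}{2}}\norm{\Lambda^{\frac{3}{2}} v}_{L^2} $.

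Combining the two estimates gives
\[
\norm{v}_{L^\infty}\leqslant C\pare{ R^{\frac{1}{2}}\norm{\Lambda^{\frac{1}{2}} v}_{L^2} + R^{-\frac{1}{2}}\norm{\Lambda^{\frac{3}{2}} v}_{L^2}} ,
\]
valid for every $ R>0 $. The final step is to optimize in $ R $: choosing $ R $ so that the two terms balance, namely $ R = \norm{\Lambda^{\frac{3}{2}} v}_{L^2}\big/\norm{\Lambda^{\frac{1}{2}} v}_{L^2} $, makes both contributions equal to $ \norm{\Lambda^{\frac{1}{2}} v}_{L^2}^{\frac{1}{2}}\norm{\Lambda^{\frac{3}{2}} v}_{L^2}^{\frac{1}{2}} $ and yields the claimed inequality. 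The same computation simultaneously shows $ \hat v\in L^1 $, hence $ v $ is (a.e. equal to) a continuous function vanishing at infinity, so $ v\in L^\infty\pare{\bR^2} $.

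The only genuine point requiring care, which I expect to be the main (mild) obstacle, is the convergence of the two frequency integrals against the weights $ \av{\xi}^{-1} $ and $ \av{\xi}^{-3} $: in polar coordinates in $ \bR^2 $ the Jacobian contributes a factor $ \rho\, \d\rho $, so $ \int_{\av{\xi}\leqslant R}\av{\xi}^{-1}\d\xi \sim \int_0^R \d\rho < \infty $ and $ \int_{\av{\xi}> R}\av{\xi}^{-3}\d\xi \sim \int_R^\infty \rho^{-2}\, \d\rho<\infty $, both finite precisely in dimension $ 2 $. This is exactly the borderline case where the critical Sobolev exponent $ s=\sfrac{3}{2} > \sfrac{d}{2}=1 $ barely embeds into $ L^\infty $, and the geometric splitting above is the cleanest way to capture the resulting logarithmic-free gain; an alternative would be to invoke the embedding $ H^{\sfrac{3}{2}}\pare{\bR^2}\hra L^\infty $ together with an abstract interpolation argument, but the direct Fourier proof is self-contained and makes the constant $ C $ explicit.
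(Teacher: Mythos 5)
Your proof is correct and follows essentially the same route as the paper: both split $ v $ at a frequency cutoff, bound the low frequencies by $ R^{\sfrac{1}{2}}\norm{\Lambda^{\sfrac{1}{2}} v}_{L^2} $ and the high frequencies by $ R^{-\sfrac{1}{2}}\norm{\Lambda^{\sfrac{3}{2}} v}_{L^2} $ via Cauchy--Schwarz against $ \av{\xi}^{-3} $, and then optimize the cutoff to $ R = \norm{\Lambda^{\sfrac{3}{2}} v}_{L^2}/\norm{\Lambda^{\sfrac{1}{2}} v}_{L^2} $. The only cosmetic difference is in the low-frequency estimate, where the paper invokes a Bernstein inequality together with the embedding $ \Lambda^{\sfrac{1}{2}}L^2\pare{\bR^2}\hra L^4\pare{\bR^2} $, while you apply Cauchy--Schwarz directly on the Fourier integral using the local integrability of $ \av{\xi}^{-1} $ in dimension two.
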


Indeed the fact that $ v\in H^{\frac{3}{2}} $ obviously implies that $ v\in L^\infty $ by classical Sobolev embeddings.  What is important in Lemma \ref{lem:interpolation_inequality} is the inequality \eqref{eq:interpolation_inequality} which allows qualitatively better control on the $ L^\infty $ norm of $ v $ in terms of an interpolation between high and low order derivatives. \\

We provide a short proof of the classical result stated in Lemma \ref{lem:interpolation_inequality} for the sake of clarity. 

\begin{proof}
We can indeed decompose $ v $ as $ v=v_A + v^A $, where 
\begin{align*}
v_A = \cF^{-1}\pare{1_{\set{\av{\xi}\leqslant A}} \hat{v}}, &&
v^A = \cF^{-1}\pare{1_{\set{\av{\xi}> A}}\hat{v}}.
\end{align*}
Using a Bernstein inequality (see \cite[Lemma 2.1, p. 52]{BCD}) and the Sobolev embedding $ \Lambda^{\sfrac{1}{2}}L^2\hra L^4 $ we can argue that
\begin{equation*}
\norm{v_A}_{L^\infty}\lesssim A^{\sfrac{1}{2}} \norm{v}_{L^4} \leqslant A^{\sfrac{1}{2}} \norm{ \Lambda^{\sfrac{1}{2}} v}_{L^2}, 
\end{equation*}
while for the hi-frequency part $ v^A $ we can argue in the following way
\begin{align*}
\av{v^A\pare{x}} & \lesssim \int _{\set{\av{\xi}>A}}\av{\hat{v}\pare{\xi}}\d \xi, \\
& \lesssim \pare{\int _{\set{\av{\xi}>A}} \av{\xi}^{-3}\d \xi}^{\sfrac{1}{2}} \norm{ \Lambda^{\sfrac{3}{2}} v}_{L^2}
\lesssim A^{-\sfrac{1}{2}}\norm{ \Lambda^{\sfrac{3}{2}} v}_{L^2}. 
\end{align*}
Setting hence
\begin{equation*}
A = \frac{\norm{ \Lambda^{\sfrac{3}{2}} v}_{L^2}}{\norm{ \Lambda^{\sfrac{1}{2}} v}_{L^2}}, 
\end{equation*}
we conclude. 
\end{proof}

And we state the following simple product rule, for a proof of which we refer the reader to \cite[Corollary 2.54, p. 90]{BCD}

\begin{lemma}\label{lem:product rule}
For each $ s>0 $ the space $ H^s\cap L^\infty $ is an algebra and there exist a $ C>0 $ such that for each $ v_1, v_2 \in H^s\cap L^\infty $
\begin{equation*}
\norm{v_1 v_2}_{H^s}\leqslant \frac{C^{s+1}}{s} \pare{\Big. \norm{v_1}_{L^\infty} \norm{v_2}_{H^s} + \norm{v_1}_{H^s} \norm{v_2}_{L^\infty} }. 
\end{equation*}
A similar results holds for the homogeneous space $ \Lambda^s L^2\cap L^\infty $. 
\end{lemma}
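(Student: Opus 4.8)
The plan is to prove the estimate via the Littlewood--Paley decomposition together with Bony's paraproduct calculus, following \cite[Chapter 2]{BCD}. First I would recall the dyadic blocks $ \pare{\Delta_j}_{j\geqslant -1} $ and the low-frequency truncations $ S_j=\sum_{j'<j}\Delta_{j'} $, so that every tempered distribution satisfies $ v=\sum_j \Delta_j v $ and the norm $ \norm{v}_{H^s} $ is equivalent, up to a universal constant, to $ \pare{\sum_j 2^{2js}\norm{\Delta_j v}_{L^2}^2}^{\sfrac 12} $. The starting point is the Bony decomposition
\begin{equation*}
v_1 v_2 = T_{v_1}v_2 + T_{v_2}v_1 + R\pare{v_1, v_2}, \qquad T_{v_1}v_2 = \sum_j S_{j-1}v_1\, \Delta_j v_2, \qquad R\pare{v_1, v_2}=\sum_{\av{j-k}\leqslant 1}\Delta_j v_1\, \Delta_k v_2,
\end{equation*}
which reduces the claim to estimating the two paraproducts and the remainder separately.

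For the paraproduct $ T_{v_1}v_2 $, each summand $ S_{j-1}v_1\,\Delta_j v_2 $ is spectrally localized in an annulus of size $ 2^j $ (since $ S_{j-1}v_1 $ lives in a ball of radius $ \sim 2^{j-1} $ and $ \Delta_j v_2 $ in the annulus $ 2^j $), so that $ \Delta_q\pare{S_{j-1}v_1\,\Delta_j v_2} $ vanishes unless $ \av{q-j} $ is bounded by a fixed constant; the blocks are therefore almost orthogonal. Using $ \norm{S_{j-1}v_1}_{L^\infty}\leqslant \norm{v_1}_{L^\infty} $ I bound $ \norm{\Delta_q\pare{S_{j-1}v_1\,\Delta_j v_2}}_{L^2}\lesssim \norm{v_1}_{L^\infty}\norm{\Delta_j v_2}_{L^2} $ and sum against the weight $ 2^{qs} $ to get $ \norm{T_{v_1}v_2}_{H^s}\lesssim \norm{v_1}_{L^\infty}\norm{v_2}_{H^s} $; this holds for \emph{every} $ s\in\bR $ and needs no sign assumption. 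The symmetric term is handled identically, yielding $ \norm{T_{v_2}v_1}_{H^s}\lesssim \norm{v_2}_{L^\infty}\norm{v_1}_{H^s} $.

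The delicate term is the remainder $ R\pare{v_1,v_2} $, whose summands $ \Delta_j v_1\,\Delta_k v_2 $ with $ \av{j-k}\leqslant 1 $ are localized only in a \emph{ball} of radius $ \sim 2^{\max\pare{j,k}} $ rather than an annulus, so the almost-orthogonality is lost: $ \Delta_q\pare{\Delta_j v_1\,\Delta_k v_2} $ can be nonzero for all $ q\leqslant \max\pare{j,k}+N_0 $, with $ N_0 $ a fixed frequency shift depending on the partition. Estimating $ \norm{\Delta_q\pare{\Delta_j v_1\,\Delta_k v_2}}_{L^2}\lesssim \norm{v_1}_{L^\infty}\norm{\Delta_k v_2}_{L^2} $ and writing $ 2^{qs}=2^{\pare{q-k}s}2^{ks} $, the shift constraint forces $ q-k\leqslant N_0+1 $, so the sum over $ q $ produces a geometric series $ \sum_{m\geqslant 0}2^{\pare{N_0+1-m}s} $ comparable to $ 2^{N_0 s}\pare{1-2^{-s}}^{-1} $. \emph{This is exactly where the hypothesis $ s>0 $ is used}: for $ s\leqslant 0 $ the series diverges, while for $ s>0 $ it converges and a discrete Young/Schur summation gives $ \norm{R\pare{v_1,v_2}}_{H^s}\lesssim 2^{N_0 s}\pare{1-2^{-s}}^{-1}\norm{v_1}_{L^\infty}\norm{v_2}_{H^s} $.

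Adding the three contributions produces the tame estimate. The stated constant $ C^{s+1}/s $ is then read off from this bookkeeping: the factor $ 2^{N_0 s}=\pare{2^{N_0}}^s $ from the fixed frequency shift accounts for the $ C^s $ part, while $ \pare{1-2^{-s}}^{-1} $, which behaves like $ \pare{s\ln 2}^{-1} $ as $ s\to 0^+ $ and stays bounded for large $ s $, is dominated by $ C/s $ after absorbing the remaining universal constants coming from Bernstein's inequality and the finite overlap of the dyadic annuli. The homogeneous statement for $ \Lambda^s L^2\cap L^\infty $ follows verbatim, replacing the inhomogeneous square function by its homogeneous counterpart. The main obstacle is not the algebraic structure — Bony's decomposition is routine — but keeping the $ s $-dependence explicit: one must refrain from absorbing $ s $-dependent factors into an anonymous $ \lesssim $ and instead track the geometric summation precisely, so as to exhibit both the sharp blow-up $ 1/s $ as $ s\to 0^+ $ and the exponential factor $ C^s $ controlling large $ s $.
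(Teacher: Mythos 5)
Your proof is correct. The paper does not prove this lemma itself---it refers the reader to \cite[Corollary 2.54, p. 90]{BCD}---and your argument via Bony's decomposition, with the paraproducts $T_{v_1}v_2$, $T_{v_2}v_1$ bounded for every $s\in\bR$ and the remainder $R\pare{v_1,v_2}$ requiring $s>0$ for the geometric summation over dyadic blocks localized in balls, is precisely the proof given in that reference, including the bookkeeping that yields the stated constant $C^{s+1}/s$ (the factor $C^s$ from the fixed frequency shift and the factor $1/s$ from $\pare{1-2^{-s}}^{-1}$ as $s\to 0^+$).
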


\noindent
We will use these results repeatedly in the following. \\

\begin{definition}[Hodge decomposition]
Let $ v \in L^2 $, we can decompose $ v $ as $ v = \cP v + \cQ v $, where
\begin{equation}
\div \ \cP v =0, \hspace{1cm} \curl \ \cQ v =0, 
\end{equation}
and 
\begin{align*}
\cP v = \pare{\Big. 1_{\bR^3} - \mathcal{R}\otimes \mathcal{R}} v, && \cQ v = \mathcal{R}\otimes \mathcal{R} \ v, 
\end{align*}
where $ \cR=\pare{\cR_1, \cR_2} $ and $ \mathcal{R}_j $ is the $ j $-th Riesz transform (cf. \cite{Stein70} and \cite{Stein93}). 
\end{definition}

\begin{rem}
We will denote the Leray projector $ \cP $ and its orthogonal (in $ L^2 $) complement $ \cQ $ respectively as
\begin{equation*}
\cP = 1- \Delta^{-1}\nabla\div, \hspace{1cm} \cQ = \Delta^{-1}\nabla\div, 
\end{equation*}
following a common notation in the Navier-Stokes theory (see \cite{LR1}). The operators $ \cP $ and $ \cQ $ are continuous in $ L^p, \ p \in\pare{1, \infty} $. \fine
\end{rem}

\begin{rem}
Let us consider a Banach space $ X $ and let us set $ p\in\bra{1, \infty} $, we  say that
\begin{equation*}
\text{the sequence } \pare{u_n}_{n\in\bN} \text{ is uniformly bounded in } L^p_{\loc}\pare{\bR_+; X}, 
\end{equation*}
if, fixed any $ T > 0 $, there exists a positive constant $ c_T $ depending on $ T>0 $ only such that
\begin{equation*}
\norm{u_n}_{L^p\pare{\bra{0, T} ; X}}\leqslant c_T \text{ for any } n\in \bN. 
\end{equation*}
\fine
\end{rem}

We will denote as $ C $ a positive constant whose  expression may depend upon the several physical parameters appearing in \eqref{eq:Rosensweig2D} and whose explicit value may implicitly vary from line to line.

\section{Energy inequality and a priori estimates}\label{sec:apriori_estimates}

In this section we perform some a priori estimates for smooth solutions of \eqref{eq:Rosensweig2D} which decay at infinity sufficiently fast to zero so that we can integrate by parts without boundary terms.  We moreover consider the external magnetic field $ F $ to be of zero average, and we formally define 
\begin{equation*}
\cG_F = \Delta^{-1}\nabla F. 
\end{equation*}\\

The main result we want to prove in this section is the following one 
\begin{prop}\label{prop:energy_est}
Let $ u_0, \omega_0, M_0, H_0\in H^{\frac{1}{2}}\pare{\bR^2} $ and let us suppose $ u, \omega, M, H $ is a smooth solution of \eqref{eq:Rosensweig2D} which decays at infinity sufficiently fast so that there is no boundary term. Then if $ F\in L^2_{\loc} \pare{\bR_+; L^2\pare{\bR^2}} $ and $ \cG_F=\Delta^{-1}\nabla F\in W^{1, \infty}_{\loc} \pare{\bR_+; H^{\frac{3}{2}}\pare{\bR^2}} $ then for any $ T>0 $
\begin{align*}
\left( u, \omega, M, H \right) \in L^\infty\pare{\bra{0, T} ; \Hud }, && \nabla\left( u, \omega, M, H \right) \in L^2\pare{\bra{0, T} ; \Hud }.
\end{align*}
\end{prop}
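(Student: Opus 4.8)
The plan is to reduce the system to the three genuine unknowns $u,\omega,M$, establish the basic $L^2$ balance through the cancellation structure, and then promote it to the $\Hud$ level by testing each equation against $\Lambda^{1/2}$ of itself, using the product rule of Lemma~\ref{lem:prod_rules_Sobolev_2D} together with the full parabolic dissipation to absorb every nonlinear contribution. First I would eliminate $H$: since $\curl\ H=0$ and $\div\pare{H+M}=F$, the field $H$ is a gradient solving $\Delta H=\nabla\pare{F-\div\ M}$, so that
\[
H=\cG_F-\cQ M,\qquad \cQ=\Delta^{-1}\nabla\div .
\]
Because $\cQ$ is a zeroth order operator (a combination of Riesz transforms) it is bounded on every $\Lambda^s L^2$, whence $\norm{\Lambda^s H}_{L^2}\lesssim\norm{\Lambda^s M}_{L^2}+\norm{\cG_F}_{\Lambda^s L^2}$; the hypotheses $\cG_F\in W^{1,\infty}_{\loc}\pare{\bR_+;H^{3/2}}$ and $F\in L^2_{\loc}\pare{\bR_+;L^2}$ then control $H$ at the $\Hud$ and $\dot H^{3/2}$ levels in terms of $M$ and of locally bounded forcing, so $H$ ceases to be an independent unknown.

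Next I would carry out the $L^2$ balance. Testing the three evolution equations against $u,\omega,M$, the transport terms vanish by $\div\ u=0$, the pressure drops out, and the rotation term is annihilated since $\pare{-M_2,M_1}\cdot\pare{M_1,M_2}=0$; the two $2\zeta$ coupling blocks recombine, because integrating by parts gives $\int\pare{\partial_2\omega,-\partial_1\omega}\cdot u=\int\omega\,\curl\ u$, producing a favourable $-4\zeta\norm{\omega}_{L^2}^2$ together with a cross term that Young's inequality absorbs into $\norm{\nabla u}_{L^2}^2$. The magnetic terms $\mu_0\langle M\cdot\nabla H,u\rangle$ and $\mu_0\langle M\times H,\omega\rangle$ enjoy the cancellation first noted in \cite{AH_Rosensweing_weak}, adapted to the present setting, while $-\tfrac1\tau\langle M-\chi_0H,M\rangle$ is dissipative up to a forcing contribution carried by $\cG_F$. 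This yields the global $L^2$ bound, i.e. $\pare{u,\omega,M}\in L^\infty_{\loc}\pare{\bR_+;L^2}$ with $\nabla\pare{u,\omega,M}\in L^2_{\loc}\pare{\bR_+;L^2}$.

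For the $\Hud$ estimate I would apply $\Lambda^{1/2}$ to each equation and pair it with $\Lambda^{1/2}u,\Lambda^{1/2}\omega,\Lambda^{1/2}M$. The dissipation furnishes the coercive quantities $\norm{\Lambda^{1/2}\nabla u}_{L^2}^2$, $\norm{\Lambda^{1/2}\nabla\omega}_{L^2}^2$ and $\sigma\norm{\Lambda^{1/2}\nabla M}_{L^2}^2$; it is here that the parabolicity of the magnetization equation (the $\sigma\Delta M$ term) is decisive. Every quadratic nonlinearity is then estimated by duality and Lemma~\ref{lem:prod_rules_Sobolev_2D}: for a transport or Lorentz term one writes, schematically,
\[
\av{\langle\Lambda^{1/2}\pare{a\cdot\nabla b},\Lambda^{1/2}c\rangle}\leqslant\norm{\Lambda^{-1/2}\pare{a\cdot\nabla b}}_{L^2}\norm{\Lambda^{3/2}c}_{L^2}\lesssim\norm{\Lambda^{1/2}a}_{L^2}\norm{\nabla b}_{L^2}\norm{\Lambda^{1/2}\nabla c}_{L^2},
\]
applying the product rule with exponents $s=\tfrac12,\ t=0$ (so $s+t-1=-\tfrac12$, admissible since $s,t<1$ and $s+t>0$). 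Young's inequality absorbs $\norm{\Lambda^{1/2}\nabla c}_{L^2}^2$ into the dissipation, leaving a factor $\norm{\nabla b}_{L^2}^2\in L^1_{\loc}\pare{\bR_+}$ (by the $L^2$ step) multiplying the energy $\norm{\Lambda^{1/2}a}_{L^2}^2$. The rotation term $M^\perp\omega$, the torque $M\times H$ and the relaxation $\tfrac1\tau\cQ M$ are handled identically, each producing a coefficient that is either an $L^\infty_t$ $L^2$-norm or an $L^1_t$ $\dot H^1$-dissipation, plus a forcing term controlled by $\norm{\cG_F}_{H^{3/2}}$.

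Collecting everything gives a differential inequality of the form
\[
\frac{d}{dt}\,\mathcal{E}_{1/2}(t)+c\,\mathcal{D}_{1/2}(t)\leqslant C(t)\,\mathcal{E}_{1/2}(t)+g(t),
\]
with $\mathcal{E}_{1/2}=\norm{\Lambda^{1/2}\pare{u,\omega,M}}_{L^2}^2$, $\mathcal{D}_{1/2}=\norm{\Lambda^{1/2}\nabla\pare{u,\omega,M}}_{L^2}^2$, and $C,g\in L^1_{\loc}\pare{\bR_+}$ thanks to the $L^2$ bounds and the regularity of $\cG_F$. Gronwall's lemma yields $\pare{u,\omega,M}\in L^\infty\pare{[0,T];\Hud}$ and, integrating the dissipation, $\nabla\pare{u,\omega,M}\in L^2\pare{[0,T];\Hud}$ for every $T>0$; combining with the $L^2$ step gives control of the full nonhomogeneous $\Hud$ norm, and the bounds for $H$ follow from $H=\cG_F-\cQ M$, which is the claim. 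The main obstacle is precisely the Lorentz force $\mu_0\,M\cdot\nabla H$: unlike a transport term it carries a full derivative and possesses no exact antisymmetry at the fractional level, so its control hinges on distributing the half-derivative through the 2D product rule and on borrowing dissipation simultaneously from the three parabolic equations (here $\norm{\nabla H}_{L^2}^2\lesssim\norm{\nabla M}_{L^2}^2+\norm{\nabla\cG_F}_{L^2}^2$ feeds the $M$-dissipation into the coefficient).
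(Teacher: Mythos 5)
Your proposal is correct and follows essentially the same route as the paper: the same two-tier argument (first the $L^2$ balance via the cancellations of \cite{AH_Rosensweing_weak}, then $\Lambda^{\sfrac{1}{2}}L^2$ estimates closed by a Gronwall inequality whose $L^1_{\loc}$ coefficients are supplied by the $L^2$ step), the same elimination $H=\cG_F-\cQ M$ (Lemma \ref{lem:reg_H}), and the same product rule (Lemma \ref{lem:prod_rules_Sobolev_2D}). The only difference is cosmetic, namely how the half-derivative is distributed in the Lorentz-force pairing: you place $\Lambda^{\sfrac{3}{2}}$ on $u$ and keep $\norm{\nabla H}_{L^2}^2\lesssim\norm{\nabla M}_{L^2}^2+\norm{\nabla\cG_F}_{L^2}^2$ as the Gronwall coefficient, whereas the paper (Lemma \ref{lem:bound_Lorentz_force}) keeps $\norm{\nabla u}_{L^2}$ at the low level and absorbs $\norm{\Lambda^{\sfrac{1}{2}}\nabla M}_{L^2}^2$ into the $\sigma$-dissipation; both choices close the estimate identically.
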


The proof of Proposition \ref{prop:energy_est} is divided in two steps; at first we prove that smooth, decaying at infinity solutions of \eqref{eq:Rosensweig2D} conserve $ L^2 $ energy adapting the proof of \cite{AH_Rosensweing_weak} in our case, i.e. when the domain is $ \bR^2 $, next we use the propagation of the $ L^2 $ energy in order to prove that, when the initial data belongs to $ H^{\sfrac{1}{2}} $, such smooth and decating solutions propagate the $ H^{\sfrac{1}{2}} $ regularity as well. The complete proofs of such results are performed in full detail in Appendix \ref{appendix_technical_estimates}, but we will nonetheless explain the main feature and cancellations which make possible such propagation. \\

 Let us define the following quantities:

\begin{align}
\tilde{c} & = \min\set{ \eta, \eta', \sigma, \frac{\mu_0\sigma}{2}, \frac{1}{ \tau} \pare{ \frac{\mu_0}{2} + \chi_0\pare{\mu_0+\frac{1}{2}}}, \frac{1}{\tau} },\label{eq:ctilde} \\
\cE\pare{0} & =  {\rho_0}\norm{u_0}^2_{L^2} + {\mu_0} \   \norm{H_0}^2_{L^2} + \rho_0 \kappa  \norm{\omega_0}_{L^2}^2 + \norm{M_0}_{L^2}^2,  \\
\cE\pare{t} & = {\rho_0}\norm{u}^2_{L^2} + {\mu_0} \   \norm{H}^2_{L^2} + \rho_0 \kappa  \norm{\omega}_{L^2}^2 + \norm{M}_{L^2}^2,  \\
\cE_d \pare{t} & =  \norm{\nabla u}^2_{L^2} + \norm{\nabla\omega}^2_{L^2} +  \norm{\nabla M}_{L^2}^2 +  \norm{\div\ M}_{L^2}^2 +\norm{H}_{L^2}^2 +\norm{M}_{L^2}^2, \label{eq:def_Ed}\\
f_\tau \pare{t} & = \frac{C}{\tau}\norm{\cG_F}_{L^2}^2 + C \norm{F}_{L^2}^2,\label{eq:cFtau} 
\end{align}
while we will denote as $ C $ a positive constant, whose explicit value may vary from line to line, which depends upon the physiscal quantities  $ \rho_0, \eta, \zeta, \mu_0, \kappa, \eta' , \sigma $ and $ \tau $.

The first result is the following control of $ L^2 $ energy:
\begin{lemma} \label{lem:L2_energy_estimates}
Let $ u_0, \omega_0,  M_0, H_0 \in L^2 $ and $F,  \cG_F, \partial_t \cG_F \in L^2_{\loc}\pare{\bR_+; L^2}  $. 
Let $ u, \omega, M, H $ be a smooth, decaying at infinity, solution of \eqref{eq:Rosensweig2D} in the timespan $ \bra{0, T} $ where $ T $ is positive and possibly finite. Then
\begin{align*}
u, M, H, \omega \in L^\infty\pare{\bra{0, T}; L^2}, && \nabla u, \nabla \omega, \nabla M, H, M \in L^2\pare{\bra{0, T}; L^2},
\end{align*}
and for each $ t\in\bra{0, T} $ the following  inequality holds
\begin{equation}\label{eq:L2_energy_bound}
\frac{1}{2} \ \cE\pare{t} + \tilde{c}\int_0^t \cE_d  \pare{t'}\d t' \leqslant \Psi\pare{U_0, F, \cG_F} ,  
\end{equation}
where
\begin{equation}\label{eq:def_Psi}
\Psi\pare{U_0, F, \cG_F}= 
\frac{1}{2} \ \cE\pare{0} + C \pare{\norm{F}_{L^2\pare{\bra{0, T}; L^2}} + \norm{\cG_F}_{L^2\pare{\bra{0, T}; L^2}} + \norm{\partial_t\cG_F}_{L^2\pare{\bra{0, T}; L^2}}}. 
\end{equation}
\end{lemma}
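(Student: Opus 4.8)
The plan is to obtain the energy inequality \eqref{eq:L2_energy_bound} by testing each evolution equation of \eqref{eq:Rosensweig2D} against its own unknown in $L^2$, with the physical weights $\rho_0, \mu_0, \rho_0\kappa, 1$ chosen precisely so that the many nonlinear coupling terms cancel in pairs. First I would treat the transport terms: since $\div\ u = 0$, each term of the form $\int \pare{u\cdot\nabla A}\cdot A \ \dx$ vanishes after integration by parts, killing the advection contributions from the $u$-, $\omega$-, and $M$-equations. The dissipative terms $-\pare{\eta+\zeta}\Delta u$, $-\eta'\Delta\omega$, $-\sigma\Delta M$ produce, upon integration by parts, the good terms $\pare{\eta+\zeta}\norm{\nabla u}_{L^2}^2$, $\eta'\norm{\nabla\omega}_{L^2}^2$, $\sigma\norm{\nabla M}_{L^2}^2$ feeding $\cE_d$.

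The crux is the handling of the magnetic couplings. Testing the $u$-equation against $u$ produces the Lorentz force term $\mu_0\int \pare{M\cdot\nabla H}\cdot u \ \dx$; testing the $\omega$-equation against $\omega$ produces $\mu_0\int \pare{M\times H}\ \omega\ \dx$; testing the $M$-equation against $M$ (weighted by $\mu_0$) produces the torque term $\mu_0\int \pare{\begin{smallmatrix} -M_2 \\ M_1 \end{smallmatrix}}\omega\cdot M\ \dx$; and the gyromagnetic coupling $2\zeta\pare{\curl\ u - 2\omega}$ against $\omega$ pairs with the term $2\zeta\pare{\begin{smallmatrix}\partial_2\omega\\ -\partial_1\omega\end{smallmatrix}}$ against $u$, which after integrating by parts combine into a perfect cancellation up to the dissipative $-8\zeta\norm{\omega}_{L^2}^2$. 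The key observation is that $\pare{\begin{smallmatrix}-M_2\\ M_1\end{smallmatrix}}\cdot M = 0$ pointwise, so the torque against $M$ vanishes, while $M\times H$ tested against $\omega$ cancels against the corresponding piece of the Lorentz interaction once one uses $\curl\ H = 0$ to write $H = \nabla\phi$ and the magnetostatic relation $\div\pare{H+M}=F$. This is the cancellation structure first isolated in \cite{AH_Rosensweing_weak}, adapted here to $\bR^2$; verifying it in detail is the main obstacle and is deferred to Appendix \ref{appendix_technical_estimates}.

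The remaining subtlety is that $H$ itself is not transported, so I would handle the $H$-evolution indirectly through the magnetostatic constraint. Writing $H = -M + \nabla\phi$ with $\Delta\phi = F$, i.e. $\nabla\phi = \cG_F = \Delta^{-1}\nabla F$, I would differentiate the constraint in time and substitute the $M$-equation to obtain an expression for $\partial_t H$ in terms of $\partial_t M$ and $\partial_t\cG_F$. This is exactly where the hypotheses $F, \cG_F, \partial_t\cG_F \in L^2_{\loc}\pare{\bR_+; L^2}$ enter: the relaxation term $-\frac{1}{\tau}\pare{M-\chi_0 H}$ tested against $\mu_0 H$ and $M$ produces, after using $H = -M+\cG_F$, the dissipative contributions $\norm{H}_{L^2}^2$ and $\norm{M}_{L^2}^2$ appearing in $\cE_d$ together with the forcing terms $\frac{C}{\tau}\norm{\cG_F}_{L^2}^2 + C\norm{F}_{L^2}^2 = f_\tau\pare{t}$, while the $\div\ M$ term in $\cE_d$ arises from the divergence of the magnetostatic identity.

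Collecting all contributions yields a differential inequality of the form $\frac{1}{2}\frac{\d}{\d t}\cE\pare{t} + \tilde{c}\ \cE_d\pare{t} \leqslant f_\tau\pare{t}$, where $\tilde{c}$ is the minimum of the dissipation coefficients as defined in \eqref{eq:ctilde}; I would choose $\tilde{c}$ so small that every good term survives with a positive coefficient after absorbing the cross terms via Young's inequality. Integrating in time over $\bra{0, t}$ and bounding $\int_0^t f_\tau\pare{t'}\d t'$ by the Cauchy-Schwarz inequality in terms of $\norm{F}_{L^2\pare{\bra{0,T};L^2}}$, $\norm{\cG_F}_{L^2\pare{\bra{0,T};L^2}}$ and $\norm{\partial_t\cG_F}_{L^2\pare{\bra{0,T};L^2}}$ gives precisely \eqref{eq:L2_energy_bound} with $\Psi$ as in \eqref{eq:def_Psi}. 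The stated regularity $u,\omega,M,H \in L^\infty\pare{\bra{0,T};L^2}$ and $\nabla u,\nabla\omega,\nabla M, H, M \in L^2\pare{\bra{0,T};L^2}$ then follows by reading off the left-hand side of the inequality.
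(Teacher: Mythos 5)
Your proposal follows the same route as the paper's own proof in Appendix \ref{pf:L2_bounds} --- weighted $L^2$ estimates, the transport cancellations from $\div \ u = 0$, the pointwise identity $\pare{-M_2, M_1}\cdot M = 0$, the rewriting of the Lorentz force through $H = \nabla \phi_H$, the pairing of the $M$-equation with $\mu_0 H$, and the recovery of the $\mu_0 \norm{H}_{L^2}^2$ part of the energy from the time-differentiated magnetostatic constraint --- but two of the identities you rely on are wrong, not merely imprecise. First, writing $H = -M + \nabla\phi$ with $\nabla\phi = \cG_F$ asserts that $M + H$ is a gradient, which would require $\curl \ M = 0$ and is false in general; the correct relation, obtained from $\div\pare{M+H} = F$ and $\curl \ H = 0$, is $H = -\cQ M + \cG_F$ with $\cQ = \Delta^{-1}\nabla\div$, i.e. \eqref{eq:H_in_fz_di_M}. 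The discrepancy, which is exactly $\cP M$, is invisible whenever you pair against a gradient field (only $\cQ M$ is seen there, which is why the paper's identity \eqref{eq:psMH}, proved by testing the constraint against $\phi_H$, is safe), but you invoke your identity precisely where it fails: in the relaxation term tested against $M$. The correct evaluation is $\frac{\chi_0}{\tau}\ps{H}{M}_{L^2} = -\frac{\chi_0}{\tau}\norm{H}_{L^2}^2 + \frac{\chi_0}{\tau}\ps{\cG_F}{H}_{L^2}$, whereas your substitution gives $-\frac{\chi_0}{\tau}\norm{M}_{L^2}^2 + \frac{\chi_0}{\tau}\ps{\cG_F}{M}_{L^2}$; the two differ by $\frac{\chi_0}{\tau}\norm{\cP M}_{L^2}^2$, so as derived your differential inequality claims dissipation that the equation does not provide.

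Second, the gyromagnetic terms do not ``combine into a perfect cancellation up to the dissipative $-8\zeta\norm{\omega}_{L^2}^2$''. Adding $2\zeta\ps{\curl \ u}{\omega}_{L^2}$ (from the $u$-equation) to $2\zeta\ps{\curl \ u}{\omega}_{L^2} - 4\zeta\norm{\omega}_{L^2}^2$ (from the $\omega$-equation) leaves the sign-indefinite quantity $4\zeta\ps{\curl \ u}{\omega}_{L^2} - 4\zeta\norm{\omega}_{L^2}^2 = \zeta\norm{\curl \ u}_{L^2}^2 - \zeta\norm{\curl \ u - 2\omega}_{L^2}^2$, which must be absorbed into the viscosity using $\norm{\curl \ u}_{L^2} = \norm{\nabla u}_{L^2}$ (valid since $\div \ u = 0$); this costs exactly the $\zeta$-part of $\pare{\eta+\zeta}\norm{\nabla u}_{L^2}^2$, which is why $\tilde{c}$ in \eqref{eq:ctilde} retains only $\eta$ and why $\cE_d$ in \eqref{eq:def_Ed} contains no $\norm{\omega}_{L^2}^2$ term. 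If you believe these terms cancel, you never perform this absorption and the estimate does not close. Finally, deferring ``the verification of the cancellation in detail'' to Appendix \ref{appendix_technical_estimates} is circular here: that appendix \emph{is} the proof of this lemma, so the chain --- $\mu_0\ps{M\cdot\nabla H}{u}_{L^2} = -\mu_0\ps{u\cdot\nabla M}{H}_{L^2}$, cancellation of the latter by testing the $M$-equation against $\mu_0 H$, matching of the torque term so produced with $\mu_0\ps{M\times H}{\omega}_{L^2}$, conversion of $\mu_0\ps{\partial_t M}{H}_{L^2}$ into $-\frac{\mu_0}{2}\frac{\d}{\d t}\norm{H}_{L^2}^2 + \mu_0\ps{\partial_t \cG_F}{H}_{L^2}$, and of $-\mu_0\sigma\ps{\Delta M}{H}_{L^2}$ into $-\mu_0\sigma\norm{\div \ M}_{L^2}^2 + \mu_0\sigma\int \div \ M \ F \ \dx$ --- has to be written out explicitly, since it constitutes the entire content of the statement.
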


It is a rather common feature, when it comes to estimate the $ L^2 $ regularity of smooth solution of nonlinear parabolic system, to look for suitable cancellations in the energy terms of higher order. This procedure leads to the well-known cancellation
\begin{equation*}
\ps{u \cdot \nabla u}{u}_{L^2} = \frac{1}{2} \int u \cdot \nabla \pare{\av{u}^2} \ \dx =0,
\end{equation*} 
due to the incompressibility of the flow, which makes possible to prove the existence of global  solutions \textit{\`a la Leray} for the incompressible \NS\ equations in any dimension. We obviously refer to seminal work of Leray \cite{Leray}.  Such simple trick can hence be used in relatively simple systems describing hydrodynamical incompressible inhomogeneous flows, such as in  \cite{charve2} or \cite{Scrobo_Froude_periodic}, in order to deduce immediately the existence of global $ L^2 $ energy solutions. \\

More refined cancellations   can be used as well in more sophisticated systems describing complex fluids, such as the already mentioned \cite{PZ12}, \cite{AH_Shilomis_weak} or \cite{AH_Rosensweing_weak}, in order to obtain uniform $ L^2 $ bounds for regularized solutions. We outline hence here the main cancellations required in order to achieve such uniform bounds, leaving the detailed computations to Appendix \ref{pf:L2_bounds}, in order to, hopefully, provide a clear idea of the methodology adopted without the unnecessary burden of the long, albeit inevitable, estimates which are involved. We remark that the cancellations explained in the following have been at first performed in \cite{AH_Rosensweing_weak} in the context of bounded, smooth, three-dimensional domains.  \\

Indeed the transport terms $ u\cdot \nabla u, \ u\cdot \nabla \omega, \ u\cdot \nabla M $ give a zero contribution in a $ L^2 $ estimate due to the fact that we consider the velocity flow to satisfy the incompressibility condition $ \div \ u=0 $, i.e.
\begin{align*}
\ps{ u\cdot \nabla u}{u}_{L^2}=0, && \ps{ u\cdot \nabla \omega}{\omega}_{L^2} =0, && \ps{ u\cdot \nabla M}{M}_{L^2} =0, 
\end{align*}
exploiting the very same trick mentioned above. Moreover since the vector $ \pare{-M_2, M_1} $ is orthogonal to $ M $ the pointwise identity $ \pare{-M_2, M_1}\cdot M=0 $ holds true, whence
\begin{equation*}
\ps{\pare{
\begin{array}{c}
-M_2\\ M_1
\end{array}
}\omega}{M}_{L^2}=0. 
\end{equation*}

The only bilinear interactions which do not present an immediate cancellation are hence the \textit{Lorentz force} $ \textsf{F}^m = \mu_0 M\cdot \nabla H $ appearing in the equation describing the evolution of $ u $ in \eqref{eq:Rosensweig2D}, and the bilinear interaction $ \mu_0 M\times H $ in the equation for $ \omega $. The key observation is hence the following one: since $ \curl \ H = 0 $ the vector field $ H $ can hence be written as the gradient of a potential function $ \phi_H $, with such the following chain of identities holds true\footnote{Here we use Einsteins summation convention}
\begin{equation*}
\ps{M\cdot \nabla H }{u}_{L^2} = \int M_i \partial_i \partial_j \phi_H \ u_j \overset{\div \ u=0}{=} - \int \partial_j M_i \partial_i \phi_H \ u_j = -\ps{u\cdot\nabla M}{H}_{L^2}. 
\end{equation*}
We can recover an analogous term as the r.h.s. of the above equation multiplying the equation of $ M $ for $ H $ and integrating, in doing so we provide the cancellation required for the Lorentz force $ \textsf{F}^m $, but we create an additional bilinear interaction 
\begin{equation*}
\ps{\pare{
\begin{array}{c}
-M_2\\ M_1
\end{array}
}\omega}{H}_{L^2}=0. 
\end{equation*}
Fortunately such interaction cancels with the term\footnote{One convinces himself performing the computations componentwise} $ \ps{M\times H}{\omega}_{L^2} $, which indeed is exactly the second term of which we could not identify an immediate cancellation. \\

Next we state and prove the following simple lemma, which relates the regularity of the vector field $ H $ in terms of the regularity of $ M $. Despite the proof is immediate we will use continuously such technical result in the rest of the paper.

\begin{lemma}\label{lem:reg_H}
Let us fix a $ s\in\bR $ and let $ M, \cG_F $ be such that $ {\Lambda^s M, \Lambda^s \cG_F\in L^2} $, then there exists a positive constant $ C $ such that
\begin{equation*}
\begin{aligned}
\norm{ \Lambda^s H}_{L^2} & \leqslant C \pare{\norm{\Lambda^s M}_{L^2} + \norm{\Lambda^s \cG_F}_{L^2}}, \\
\norm{ \Lambda^s \nabla H}_{L^2} & \leqslant C \pare{\norm{\Lambda^s \nabla M}_{L^2} + \norm{\Lambda^s \nabla \cG_F}_{L^2}}
.
\end{aligned}
\end{equation*}
\end{lemma}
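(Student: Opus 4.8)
The plan is to exploit the two constraints that the system \eqref{eq:Rosensweig2D} imposes on $H$, namely $\curl\ H = 0$ together with the magnetostatic equation $\div\pare{H+M}=F$, in order to write $H$ explicitly as a zeroth order operator applied to $M$ plus a forcing contribution; once this is done both inequalities follow immediately from the $L^2$-boundedness of the projector $\cQ$ and from the fact that $\cQ$ commutes with the Fourier multipliers $\Lambda^s$ and $\nabla$.

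First I would note that the condition $\curl\ H = 0$ collapses the Hodge decomposition of $H$ onto its potential part: on the Fourier side $\widehat{\cP H}$ is the projection of $\hat H$ onto the orthogonal complement of $\xi$, which is exactly annihilated by the hypothesis $\curl\ H = 0$, so that $H = \cQ H = \Delta^{-1}\nabla\div\ H$. Inserting $\div\ H = F - \div\ M$, read off from the magnetostatic equation, and recalling that $\div\ \cG_F = \Delta^{-1}\Delta F = F$, I obtain the pointwise-in-time identity
\begin{equation*}
H = \Delta^{-1}\nabla F - \Delta^{-1}\nabla\div\ M = \cG_F - \cQ M .
\end{equation*}
This single formula is the whole content of the lemma.

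It then remains to apply $\Lambda^s$ and to use that $\cQ = \cR\otimes\cR$ is a matrix of compositions of Riesz transforms, hence a Fourier multiplier of order zero that is bounded on $L^2$ and commutes with $\Lambda^s$. Thus $\norm{\Lambda^s\cQ M}_{L^2}=\norm{\cQ\Lambda^s M}_{L^2}\leqslant C\norm{\Lambda^s M}_{L^2}$, and the triangle inequality applied to $\Lambda^s H = \Lambda^s\cG_F - \Lambda^s\cQ M$ yields the first bound. For the second I would differentiate the same identity, $\nabla H = \nabla\cG_F - \cQ\nabla M$, using that $\nabla$ and $\cQ$ commute (both being Fourier multipliers), and repeat the previous estimate with $\Lambda^s\nabla M$ in place of $\Lambda^s M$.

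As the author already signals, there is no genuine obstacle here: the only point worth isolating is the reduction $H = \cG_F - \cQ M$, which converts the two differential constraints on $H$ into an explicit algebraic relation. Once this is in hand everything is a consequence of the $L^2$-continuity of $\cQ$ and of the commutation of the relevant Fourier multipliers; the only mild care needed is to record that $\cQ$, $\Lambda^s$ and $\nabla$ all commute, so that no derivative is lost in passing from the first estimate to the second.
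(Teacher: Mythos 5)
Your proof is correct and follows essentially the same route as the paper: both reduce the lemma to the identity $H = -\cQ M + \cG_F$ obtained from the constraints $\curl\ H = 0$ and $\div\pare{H+M}=F$, and then conclude from the fact that $\cQ$ is an order-zero Fourier multiplier, bounded on $L^2$, commuting with $\Lambda^s$ and $\nabla$. The only difference is that you spell out the derivation of the identity via the Hodge decomposition ($\curl\ H=0$ forcing $H=\cQ H$), a step the paper's proof leaves implicit.
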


\begin{proof}
Let us consider the magnetostatic equation $ \div\pare{M+H}=F $,  we can deduce hence that
\begin{equation}\label{eq:H_in_fz_di_M}
H = -\cQ M +\cG_F, 
\end{equation}
where $ \cQ $ is defined as
\begin{equation*}
\cQ v = \Delta^{-1}\nabla\div  \ v, 
\end{equation*}
and for any vector field $ v $ we have $ \curl \ \cQ v=0 $. The operator  $ \cQ $ commutes with $ \Lambda^s $, and being $ \cQ $ a Fourier multiplier of order zero it maps  $ L^2 $  to itself, whence the claim follows. 
\end{proof}

A first application of Lemma \ref{lem:reg_H} is the following lemma, which provides a bound for the Lorentz force $ \textsf{F}^m $ in $ H^{\sfrac{1}{2}} $:

\begin{lemma}\label{lem:bound_Lorentz_force}
The following bound holds true
\begin{equation*}
\av{ \ps{\Lambda^{\sfrac{1}{2}} \pare{M\cdot \nabla H}}{\Lambda^{\sfrac{1}{2}} u}_{L^2}}\leqslant C \norm{\nabla u}_{L^2} \norm{ \Lambda^{\sfrac{1}{2}} M}_{L^2} \pare{\norm{\nabla \Lambda^{\sfrac{1}{2}} M}_{L^2} + \norm{\nabla \Lambda^{\sfrac{1}{2}} \cG_F}_{L^2}}. 
\end{equation*}
\end{lemma}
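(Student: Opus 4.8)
The plan is to reduce the $ H^{\sfrac{1}{2}} $--level pairing to a plain $ L^2 $ product estimate by exploiting the self-adjointness of $ \Lambda^{\sfrac{1}{2}} $, and then to close using the two-dimensional endpoint embedding $ \Lambda^{\sfrac{1}{2}}L^2\hra L^4 $ together with Lemma \ref{lem:reg_H}. The guiding observation is that the target bound asks for the factor $ \norm{\nabla u}_{L^2} $ (order $ 1 $ on $ u $), so one half-derivative should be transferred off $ u $ and onto the product.

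First I would move both half-derivatives onto $ u $. Since $ \Lambda^{\sfrac{1}{2}} $ is a self-adjoint Fourier multiplier with real symbol $ \av{\xi}^{\sfrac{1}{2}} $, Plancherel's identity gives
\[
\ps{\Lambda^{\sfrac{1}{2}}\pare{M\cdot\nabla H}}{\Lambda^{\sfrac{1}{2}}u}_{L^2} = \ps{M\cdot\nabla H}{\Lambda u}_{L^2},
\]
so that Cauchy--Schwarz followed by the norm equivalence $ \norm{\Lambda u}_{L^2}\leqslant K\norm{\nabla u}_{L^2} $ recalled in Section \ref{sec:preliminaries} yields
\[
\av{\ps{\Lambda^{\sfrac{1}{2}}\pare{M\cdot\nabla H}}{\Lambda^{\sfrac{1}{2}}u}_{L^2}}\leqslant K\,\norm{M\cdot\nabla H}_{L^2}\,\norm{\nabla u}_{L^2}.
\]
This maneuver produces exactly the factor $ \norm{\nabla u}_{L^2} $ required by the statement and relegates all the remaining work to controlling $ M\cdot\nabla H $ in $ L^2 $.

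Next I would estimate $ \norm{M\cdot\nabla H}_{L^2} $ by splitting the product with Hölder's inequality (using $ \tfrac12=\tfrac14+\tfrac14 $), namely $ \norm{M\cdot\nabla H}_{L^2}\leqslant \norm{M}_{L^4}\norm{\nabla H}_{L^4} $, and then invoking the embedding $ \Lambda^{\sfrac{1}{2}}L^2\hra L^4 $ (the inequality $ \norm{v}_{L^4}\leqslant C_1\norm{\Lambda^{\sfrac{1}{2}}v}_{L^2} $ stated above) applied to $ M $ and to each component of $ \nabla H $; this gives $ \norm{M}_{L^4}\leqslant C\norm{\Lambda^{\sfrac{1}{2}}M}_{L^2} $ and $ \norm{\nabla H}_{L^4}\leqslant C\norm{\Lambda^{\sfrac{1}{2}}\nabla H}_{L^2} $. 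Finally I would rewrite $ \norm{\Lambda^{\sfrac{1}{2}}\nabla H}_{L^2}=\norm{\nabla\Lambda^{\sfrac{1}{2}}H}_{L^2} $ and apply the second inequality of Lemma \ref{lem:reg_H} with $ s=\sfrac{1}{2} $ to dominate it by $ C\pare{\norm{\nabla\Lambda^{\sfrac{1}{2}}M}_{L^2}+\norm{\nabla\Lambda^{\sfrac{1}{2}}\cG_F}_{L^2}} $; chaining these inequalities gives precisely the claim.

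The computation is essentially routine once the tools are lined up, and I do not expect a serious obstacle. The only genuinely delicate point is that $ M\cdot\nabla H $ is placed in $ L^2 $ at the critical Sobolev scaling, so both factors must be measured in $ L^4 $ and the endpoint embedding $ \Lambda^{\sfrac{1}{2}}L^2\hra L^4 $ is used twice with no room to spare, which is exactly where working in $ \bR^2 $ rather than $ \bR^3 $ is crucial. A quick homogeneity check (each side is scale--invariant of degree $ 0 $) confirms that distributing the derivatives as $ \Lambda^{\sfrac{1}{2}} $ on $ M $ and $ \Lambda^{\sfrac{1}{2}} $ on $ \nabla H $ is the consistent choice.
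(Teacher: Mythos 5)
Your proof is correct and follows essentially the same route as the paper: self-adjointness of $ \Lambda^{\sfrac{1}{2}} $ to move both half-derivatives onto $ u $, Cauchy--Schwarz to reduce everything to $ \norm{M\cdot\nabla H}_{L^2} $, and Lemma \ref{lem:reg_H} to eliminate $ H $ in favor of $ M $ and $ \cG_F $. The only (cosmetic) difference is that where the paper invokes Lemma \ref{lem:prod_rules_Sobolev_2D} with $ s=t=\sfrac{1}{2} $ to get $ \norm{M\cdot\nabla H}_{L^2}\leqslant C\norm{\Lambda^{\sfrac{1}{2}}M}_{L^2}\norm{\Lambda^{\sfrac{1}{2}}\nabla H}_{L^2} $, you re-derive precisely that case by H\"older's inequality and the embedding $ \Lambda^{\sfrac{1}{2}}L^2\hra L^4 $, which is the standard proof of that product estimate at this exponent.
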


\begin{proof}
Since for any $ s\in \bR $ the operator $ \Lambda^s v = \mathcal{F}^{-1} \pare{ \av{\xi}^s\hat{v}} $ is self-adjoint in $ L^2 $ we deduce that
\begin{align*}
\av{ \ps{ \Lambda^{\sfrac{1}{2}}\pare{M\cdot \nabla H}}{\Lambda^{\sfrac{1}{2}} u}_{L^2}} & =
\av{\int \Lambda^{\sfrac{1}{2}} \pare{M\cdot\nabla H} \cdot \Lambda^{\sfrac{1}{2}} u\ \dx},  \\
& =  \av{ \int \pare{M\cdot\nabla H}  \cdot \Lambda u \ \dx }, \\
& \leqslant \norm{\nabla u}_{L^2} \norm{M\cdot \nabla H}_{L^2}. 
\end{align*}
whence applying Lemma \ref{lem:prod_rules_Sobolev_2D} and \ref{lem:reg_H}
\begin{align*}
\norm{M\cdot \nabla H}_{L^2} & \leqslant C \norm{\Lambda^{\sfrac{1}{2}} M}_{L^2} \norm{\Lambda^{\sfrac{1}{2}}\nabla H}_{L^2}, \\
& \leqslant C  \norm{\Lambda^{\sfrac{1}{2}} M}_{L^2} \pare{\norm{\Lambda^{\sfrac{1}{2}}\nabla M}_{L^2} + \norm{\Lambda^{\sfrac{1}{2}}\nabla \cG_F}_{L^2}}.
\end{align*}
\end{proof}

Now we can pass to the second step in the proof of Proposition \ref{prop:energy_est}, i.e. we provide global $ H^{\frac{1}{2}}\pare{\bR^2} $ bounds for smooth, decaying at infinity, solutions of \eqref{eq:Rosensweig2D}. Let us hence define the following quantities, 
\begin{equation}
\label{eq:def_c}
\begin{aligned}
c & = \min \set{  \frac{\eta+\zeta}{2}, \frac{\eta'}{2} , 4\zeta , \frac{\sigma}{2} , \frac{1}{\tau} , \frac{\chi_0}{2\tau} }, \\
\cF\pare{0} & = \rho_0 \norm{\Lambda^{\sfrac{1}{2}} u_0}_{L^2}^2 + \rho_0\kappa \norm{ \Lambda^{\sfrac{1}{2}} \omega_0}_{L^2}^2 + \norm{\Lambda^{\sfrac{1}{2}} M_0}_{L^2}^2, \\
\cF\pare{t} & = \rho_0 \norm{\Lambda^{\sfrac{1}{2}} u}_{L^2}^2 + \rho_0\kappa \norm{\Lambda^{\sfrac{1}{2}} \omega}_{L^2}^2 + \norm{\Lambda^{\sfrac{1}{2}} M}_{L^2}^2, \\
\cF_d\pare{t} & = \norm{\Lambda^{\sfrac{1}{2}}\nabla u}_{L^2} + \norm{\Lambda^{\sfrac{1}{2}}\nabla \omega}_{L^2}^2 + \norm{\Lambda^{\sfrac{1}{2}} \omega}_{L^2}^2 + \norm{\Lambda^{\sfrac{1}{2}} \nabla M}_{L^2}^2 + \norm{\Lambda^{\sfrac{1}{2}} M}_{L^2}^2 +\norm{\Lambda^{\sfrac{1}{2}}\cQ M}_{L^2}^2, \\
\Phi_\tau \pare{t} & = \norm{\Lambda^{\sfrac{1}{2}}\nabla \cG_F}_{L^2}^2 + \frac{C}{\tau}\norm{\Lambda^{\sfrac{1}{2}} \cG_F}_{L^2}^2+
\norm{\nabla \omega}_{L^2}\norm{\nabla u}_{L^2}. 
\end{aligned}
\end{equation}
which will be used in the statement of the following lemma:

\begin{lemma}
\label{lem:H12_energy_estimates}
Let $ u_0, \omega_0,  M_0, H_0 \in H^{\sfrac{1}{2}} $,  $F\in L^2_{\loc}\pare{\bR_+; L^2}  $ and $\cG_F\in  W^{1, \infty}_{\loc}\pare{\bR_+; H^{\sfrac{3}{2}}} $. 
Let $ u, \omega, M, H $ be a smooth, decaying at infinity, solution of \eqref{eq:Rosensweig2D} in the timespan $ \bra{0, T} $ where $ T $ is positive and possibly finite. Then 
\begin{equation*}
\Lambda^{\sfrac{1}{2}}\pare{u, \omega, M} \in L^\infty\pare{\bra{0, T}; L^2 }, \hspace{1cm} \Lambda^{\sfrac{1}{2}}\pare{\nabla u, \nabla \omega,  \nabla M} \in L^\infty\pare{\bra{0, T}; L^2 },
\end{equation*}
and for each $ t\in\bra{0, T} $ the following inequality holds
\begin{equation*}
\cF \pare{t} + 2c \int_0^t  \cF_d \pare{t'} \d t' \leqslant \widetilde{\Psi}\pare{U_0, F, \cG_F}, 
\end{equation*}
where $ \widetilde{\Psi}\pare{U_0, F, \cG_F} $ is defined as
\begin{multline*}
\widetilde{\Psi}\pare{U_0, F, \cG_F} =
C \  \cF\pare{0}\exp\set{\frac{C}{\tilde{c}} \ \Psi\pare{U_0, F, \cG_F}} + C \ \exp\set{\frac{C}{\tilde{c}} \ \Psi\pare{U_0, F, \cG_F}} \norm{\cG_F}_{L^2\pare{\bra{0, T}; H^{\frac{3}{2}}}} \\
 + \frac{C}{\tilde{c}} \ \exp\set{\frac{C}{\tilde{c}} \ \Psi\pare{U_0, F, \cG_F}} \ \Psi\pare{U_0, F, \cG_F},
\end{multline*}
and $ \Psi $ is defined in \eqref{eq:def_Psi}. Moreover
\begin{equation}\label{eq:H_in_Eud}
\Lambda^{\sfrac{1}{2}} H \in L^\infty\pare{\bra{0, T}; L^2}, \hspace{1cm} \Lambda^{\sfrac{1}{2}}\nabla H\in  L^2\pare{\bra{0, T}; L^2}. 
\end{equation}
\end{lemma}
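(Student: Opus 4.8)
The plan is to run an energy estimate at the homogeneous $ H^{\sfrac{1}{2}} $ level, mirroring the $ L^2 $ argument behind Lemma \ref{lem:L2_energy_estimates} but now carrying one half–derivative. Concretely, I would apply $ \Lambda^{\sfrac{1}{2}} $ to the evolution equations for $ u, \omega $ and $ M $ in \eqref{eq:Rosensweig2D}, pair each in $ L^2 $ against $ \Lambda^{\sfrac{1}{2}} u, \Lambda^{\sfrac{1}{2}}\omega $ and $ \Lambda^{\sfrac{1}{2}} M $ respectively, and add the three identities after weighting by $ \rho_0, \rho_0\kappa $ and $ 1 $. The time derivatives assemble into $ \frac{1}{2}\frac{\d}{\d t}\cF\pare{t} $; the parabolic terms, after integration by parts, produce the dissipation $ \pare{\eta+\zeta}\norm{\Lambda^{\sfrac{1}{2}}\nabla u}_{L^2}^2 + \eta'\norm{\Lambda^{\sfrac{1}{2}}\nabla\omega}_{L^2}^2 + \sigma\norm{\Lambda^{\sfrac{1}{2}}\nabla M}_{L^2}^2 $; and the pressure term drops by $ \ps{\Lambda^{\sfrac{1}{2}}\nabla p}{\Lambda^{\sfrac{1}{2}} u}_{L^2}=0 $ since $ \div\ u=0 $. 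The zeroth–order term $ -4\zeta\omega $ gives $ 4\zeta\norm{\Lambda^{\sfrac{1}{2}}\omega}_{L^2}^2 $, while after inserting the magnetostatic relation $ H=-\cQ M+\cG_F $ from \eqref{eq:H_in_fz_di_M} the relaxation term yields $ \frac{1}{\tau}\norm{\Lambda^{\sfrac{1}{2}} M}_{L^2}^2 + \frac{\chi_0}{\tau}\norm{\Lambda^{\sfrac{1}{2}}\cQ M}_{L^2}^2 $, the last being nonnegative because $ \cQ $ is an orthogonal projector, so that $ \ps{\Lambda^{\sfrac{1}{2}}\cQ M}{\Lambda^{\sfrac{1}{2}}M}_{L^2}=\norm{\Lambda^{\sfrac{1}{2}}\cQ M}_{L^2}^2 $. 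Together these account for the full dissipative quantity $ 2c\,\cF_d\pare{t} $, with $ c $ as in \eqref{eq:def_c}.

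The heart of the matter is the control of the nonlinear and coupling terms, each of which must be split into a piece absorbable by $ c\,\cF_d $ and a remainder of the form $ g\pare{t}\cF\pare{t}+h\pare{t} $ with $ g,h $ integrable in time. For the transport terms I would write $ \ps{\Lambda^{\sfrac{1}{2}}\pare{u\cdot\nabla A}}{\Lambda^{\sfrac{1}{2}} A}_{L^2} $ and use $ \ps{u\cdot\nabla\Lambda^{\sfrac{1}{2}}A}{\Lambda^{\sfrac{1}{2}}A}_{L^2}=0 $ by incompressibility, estimating the remaining commutator via the product rule Lemma \ref{lem:prod_rules_Sobolev_2D} and the embedding $ \Lambda^{\sfrac{1}{2}}L^2\hra L^4 $ to obtain a bound of the type $ \tfrac{c}{N}\norm{\Lambda^{\sfrac{1}{2}}\nabla A}_{L^2}^2 + C\norm{\nabla A}_{L^2}^2\,\cF\pare{t} $. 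The Lorentz force is handled directly by Lemma \ref{lem:bound_Lorentz_force}, Young's inequality splitting its bound into $ \tfrac{c}{N}\norm{\Lambda^{\sfrac{1}{2}}\nabla M}_{L^2}^2 $, a term $ C\norm{\nabla u}_{L^2}^2\norm{\Lambda^{\sfrac{1}{2}}M}_{L^2}^2 $, and a forcing term involving $ \norm{\Lambda^{\sfrac{1}{2}}\nabla\cG_F}_{L^2} $. For the gyroscopic pair $ 2\zeta\pare{\partial_2\omega,-\partial_1\omega} $ and $ 2\zeta\,\curl u $, integration by parts recombines the two pairings into $ 4\zeta\ps{\Lambda^{\sfrac{1}{2}}\curl u}{\Lambda^{\sfrac{1}{2}}\omega}_{L^2} $, bounded by $ \norm{\nabla u}_{L^2}\norm{\nabla\omega}_{L^2} $, which is precisely the last contribution to $ \Phi_\tau $ in \eqref{eq:def_c}. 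Finally the magnetic torque $ \mu_0 M\times H $ and the rotation term $ \pare{-M_2,M_1}\omega $ are bilinear; after inserting $ H=-\cQ M+\cG_F $ they are estimated by Lemma \ref{lem:prod_rules_Sobolev_2D} and the $ L^4 $ interpolation, again splitting into an absorbable part plus controllable lower–order and forcing terms, in the same spirit as the $ L^2 $ cancellation between these two terms.

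Collecting everything produces a differential inequality $ \tfrac{\d}{\d t}\cF + 2c\,\cF_d \leqslant g\pare{t}\,\cF + h\pare{t} $, where $ g $ is a combination of $ \norm{\nabla u}_{L^2}^2, \norm{\nabla\omega}_{L^2}^2 $ and $ \norm{\nabla M}_{L^2}^2 $, and $ h $ collects the forcing together with $ \norm{\nabla u}_{L^2}\norm{\nabla\omega}_{L^2} $. The crucial point is that all these quantities are time–integrable by the $ L^2 $ bound \eqref{eq:L2_energy_bound}: indeed $ \int_0^T g\,\d t \leqslant \tfrac{C}{\tilde c}\,\Psi\pare{U_0,F,\cG_F} $, while $ \int_0^T h\,\d t $ is controlled by $ \norm{\cG_F}_{L^2\pare{\bra{0,T};H^{\sfrac{3}{2}}}} $ and again by $ \Psi $. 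A Grönwall argument then delivers $ \cF\pare{t}+2c\int_0^t\cF_d\pare{t'}\,\d t' \leqslant \widetilde{\Psi}\pare{U_0,F,\cG_F} $ with exactly the exponential factor $ \exp\set{\tfrac{C}{\tilde c}\Psi} $, yielding the stated regularity of $ \Lambda^{\sfrac{1}{2}}\pare{u,\omega,M} $ and its gradient; the bound \eqref{eq:H_in_Eud} on $ H $ follows at once from Lemma \ref{lem:reg_H} with $ s=\tfrac{1}{2} $, using the $ H^{\sfrac{1}{2}} $–control of $ M $ just obtained and the hypothesis $ \cG_F\in W^{1,\infty}_{\loc}\pare{\bR_+;H^{\sfrac{3}{2}}} $. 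I expect the main obstacle to be the transport terms: unlike in the $ L^2 $ estimate they no longer cancel, and the commutator $ \bra{\Lambda^{\sfrac{1}{2}},\,u\cdot\nabla} $ sits right at the borderline of the two–dimensional product rule $ s+t>0,\ s,t<1 $, so keeping its estimate compatible with a time–integrable Grönwall multiplier controlled by $ \Psi $ is the delicate step on which the whole argument hinges.
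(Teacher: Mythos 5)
Your proposal is correct and follows essentially the same route as the paper's proof: $\Lambda^{\sfrac{1}{2}}$-level energy estimates on the three evolution equations, direct bilinear bounds via Lemma \ref{lem:prod_rules_Sobolev_2D} and Lemma \ref{lem:bound_Lorentz_force}, absorption of the critical pieces into the dissipation, a Gr\"onwall argument whose multiplier is time-integrable thanks to the $L^2$ bound \eqref{eq:L2_energy_bound} (giving exactly the factor $\exp\set{C\Psi/\tilde{c}}$), and Lemma \ref{lem:reg_H} with $s=\sfrac{1}{2}$ for \eqref{eq:H_in_Eud}.

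The only place you deviate is the transport terms, and there the difficulty you flag as ``the delicate step on which the whole argument hinges'' is in fact not present. Since the term you cancel by incompressibility vanishes, the remaining ``commutator'' pairing is identical to the full pairing $\ps{\Lambda^{\sfrac{1}{2}}\pare{u\cdot\nabla A}}{\Lambda^{\sfrac{1}{2}} A}_{L^2}$, and the paper bounds this directly, with no commutator estimate at all: by self-adjointness of $\Lambda^{\sfrac{1}{2}}$ it equals $\ps{u\cdot\nabla A}{\Lambda A}_{L^2}\leqslant \norm{u\cdot\nabla A}_{L^2}\norm{\nabla A}_{L^2}$, and Lemma \ref{lem:prod_rules_Sobolev_2D} with $s=t=\sfrac{1}{2}$ gives $\norm{u\cdot\nabla A}_{L^2}\lesssim \norm{\Lambda^{\sfrac{1}{2}}u}_{L^2}\norm{\Lambda^{\sfrac{1}{2}}\nabla A}_{L^2}$, so Young's inequality yields precisely your target bound $\epsilon\norm{\Lambda^{\sfrac{1}{2}}\nabla A}_{L^2}^2 + C\norm{\nabla A}_{L^2}^2\,\cF\pare{t}$ with $\norm{\nabla A}_{L^2}^2$ integrable in time. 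Hence the borderline Kato--Ponce-type commutator estimate you anticipate never has to be proved, and the rest of your argument closes as you describe.
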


The proof of Lemma \eqref{lem:H12_energy_estimates} consists simply in performing $ \Lambda^{\sfrac{1}{2}} L^2 $ energy estimates on the system \ref{eq:Rosensweig2D} and using the regularity results stated in Lemma \ref{lem:L2_energy_estimates} and proved in Appendix \ref{pf:L2_bounds} to deduce a global energy bound. The detailed proof is postponed in Appendix \ref{appendix:H12_energy_estimates}, and can be skipped in a first stance, for the sake of the readability. Nonetheless one convinces himself that such estimates work out fine considering the bound provided in Lemma \ref{lem:bound_Lorentz_force}: if we consider $ \cG_F $ sufficiently regular\footnote{Let us recall that $ \cG_F = \Delta^{-1}\nabla F $ depends on the external magnetic field $ F $ only, whence it is not an unknown of the system.} the bound provided allows us to apply a Gronwall inequality and entails global $ \Lambda^{\sfrac{1}{2}} L^2 $ regularity provided that $ \norm{\nabla u}_{L^2} \in L^2_{\loc}\pare{\bR_+} $, which is assured by Lemma \ref{lem:L2_energy_estimates}. \\

At this point we can hence prove Proposition \ref{prop:energy_est}; denoting $ U=\pare{u, \omega, M, H} $, considering the inequality
\begin{align*}
\norm{U}_{\Hud} & = \sqrt{ \norm{U}_{L^2}^2 + \norm{\Lambda^{\sfrac{1}{2}} U}_{L^2}^2}, \\
& \leqslant \norm{U}_{L^2} + \norm{\Lambda^{\sfrac{1}{2}} U}_{L^2}, 
\end{align*}
and the results of Lemma \ref{lem:L2_energy_estimates} and \ref{lem:H12_energy_estimates} the claim of Proposition \ref{prop:energy_est} follows.

\section{The approximate system}\label{sec:weak_solutions}

The purpose of the present section is to build global weak solutions for the system \eqref{eq:Rosensweig2D} for sufficiently regular initial data. We will use a Galerkin approximation method.   In detail we prove the following result:

\begin{prop}\label{prop:existence_global_weak_H12_solutions}
Let $ u_0, \omega_0, M_0, H_0 \in H^{\frac{1}{2}}\pare{\bR^2} $ be such that $ \div \ u_0=0 $. Let $F\in L^{2}_{\loc}\pare{\bR_+; L^2}, \ \cG_F \in W^{1, \infty}_{\loc} \pare{\bR_+; H^{\sfrac{3}{2}}}  $, then there exists a unique global weak solution $ \pare{u, \omega, M, H} $ of \eqref{eq:Rosensweig2D} in the energy space 
\begin{equation*}
\pare{u, \omega, M, H}\in \cC\pare{\bR_+; H^{\frac{1}{2}}}, \hspace{1cm} \pare{ \nabla u, \nabla \omega, \nabla M, \nabla H}\in L^2_{\loc}\pare{\bR_+; H^{\frac{1}{2}}}. 
\end{equation*}
\end{prop}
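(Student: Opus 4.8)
The plan is to construct the solution by a Friedrichs-type frequency truncation, exploiting first the fact that the magnetostatic equation removes $H$ as an independent unknown: by Lemma \ref{lem:reg_H} and \eqref{eq:H_in_fz_di_M} we have $H=-\cQ M+\cG_F$, so the system reduces to the three unknowns $\pare{u,\omega,M}$, with $H$ reconstructed a posteriori. I would set $J_n v=\cF^{-1}\pare{1_{\set{\av{\xi}\leqslant n}}\hat v}$ and define the approximate system by applying the Leray projector $\cP$ to the velocity equation to eliminate the pressure, substituting $H=-\cQ M+\cG_F$, and projecting every equation by $J_n$ so that the nonlinearities act only on band-limited fields. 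On the space $L^2_n=\set{v:\ \textnormal{supp}\,\hat v\subseteq B\pare{0,n}}$ the result is a quadratic ordinary differential equation with locally Lipschitz right-hand side, and the Cauchy--Lipschitz theorem yields a unique maximal solution $\pare{u_n,\omega_n,M_n}$.

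The next step is to check that the truncation preserves the cancellation structure of Section \ref{sec:apriori_estimates}. Since $J_n$ is a self-adjoint projection commuting with $\nabla$, $\cQ$ and $\Lambda^s$, testing the $n$-th equations against $u_n,\omega_n,M_n$ reproduces verbatim the identities behind Lemmas \ref{lem:L2_energy_estimates} and \ref{lem:H12_energy_estimates}; in particular the compensation between the Lorentz force $\textsf{F}^m=\mu_0\,M\cdot\nabla H$ and the term $\mu_0\,M\times H$ survives the projection. Hence the a priori bounds of Proposition \ref{prop:energy_est} hold uniformly in $n$, so that
\begin{equation*}
\pare{u_n,\omega_n,M_n}\ \text{is bounded in}\ L^\infty_{\loc}\pare{\bR_+;\Hud}\cap L^2_{\loc}\pare{\bR_+;H^{\sfrac{3}{2}}},
\end{equation*}
uniformly in $n$. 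Because these bounds control the $\Hud$ norm on every finite interval, the maximal ODE solution cannot blow up in finite time and is therefore global.

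Now comes compactness. Using the equations together with the uniform bounds and Lemma \ref{lem:bound_Lorentz_force} — which is precisely where the $\Hud$ regularity intervenes, upgrading $\textsf{F}^m$ to $L^2_{\loc}\pare{\bR_+;L^2}$ — I would estimate $\partial_t\pare{u_n,\omega_n,M_n}$ in $L^2_{\loc}\pare{\bR_+;H^{-\sfrac{1}{2}}}$, uniformly in $n$ (the worst contribution being the diffusion $\Delta$ applied to an $H^{\sfrac{3}{2}}$ field). Since Sobolev embeddings on $\bR^2$ are not compact, I cannot invoke Aubin--Lions globally; instead I would apply it on each ball $B_R$, where $H^{\sfrac{3}{2}}\pare{B_R}\Subset\Hud\pare{B_R}\hra H^{-\sfrac{1}{2}}\pare{B_R}$, and extract by a diagonal procedure over $R\to\infty$ a subsequence converging strongly in $L^2_{\loc}\pare{\bR_+;\Hud_{\loc}}$ and weakly-$*$ in the energy spaces above. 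This strong local convergence suffices to pass to the limit in the quadratic terms $u_n\cdot\nabla u_n$, $u_n\cdot\nabla M_n$, $M_n\cdot\nabla H_n$ and $M_n\times H_n$, the linear and forcing terms passing by weak convergence; reconstructing $H=-\cQ M+\cG_F$ then gives a weak solution with the announced regularity. Continuity in time, $\pare{u,\omega,M,H}\in\cC\pare{\bR_+;\Hud}$, follows from $U\in L^2_{\loc}\pare{\bR_+;H^{\sfrac{3}{2}}}$ and $\partial_t U\in L^2_{\loc}\pare{\bR_+;H^{-\sfrac{1}{2}}}$ via the Lions--Magenes interpolation lemma.

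Finally, for uniqueness I would write the system satisfied by the difference of two solutions with the same data and run an $L^2$ energy estimate. The self-transport terms vanish by incompressibility, while the cross terms and the magnetic bilinear interactions are controlled by the two-dimensional Ladyzhenskaya inequality $\norm{v}_{L^4}\lesssim\norm{v}_{L^2}^{\sfrac{1}{2}}\norm{\nabla v}_{L^2}^{\sfrac{1}{2}}$ followed by a Young absorption into the diffusion; the magnetostatic relation reduces the $H$-differences to $M$-differences through the $L^2$-bounded operator $\cQ$. A Gronwall argument, whose integrating factor is finite because $\norm{\nabla u}_{L^2}^2$ and the analogous gradient norms lie in $L^1_{\loc}$ by the energy bounds, then forces the difference to vanish. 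The \emph{main obstacle} I anticipate is not any single estimate but the combination of the whole-space loss of compactness — handled by the localized Aubin--Lions plus diagonal extraction — with the need to keep the subtle Lorentz-force cancellation intact through the truncation; the time-regularity deficiency flagged in the introduction is exactly what the $\Hud$ a priori bounds of Section \ref{sec:apriori_estimates} are designed to overcome.
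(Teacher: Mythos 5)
Your proposal follows essentially the same route as the paper: a frequency-truncated Galerkin scheme reduced to the unknowns $\pare{u,\omega,M}$ via $H=-\cQ M+\cG_F$ and the Leray projector, global-in-$n$ solutions of the resulting ODE thanks to uniform $L^2$ and $\Hud$ energy bounds (whose cancellations survive the truncation), localized Aubin--Lions compactness with a diagonal extraction to pass to the limit in the nonlinearities, and a Gronwall estimate on the difference of two solutions for uniqueness. The only deviations are minor: the paper also truncates low frequencies in its operator $\cJ_n$, and its uniqueness proof runs both $L^2$ and $\Lambda^{\sfrac{1}{2}}L^2$ difference estimates before interpolating (thereby giving stability in the energy norm), whereas your $L^2$-only Gronwall argument, with coefficients in $L^1_{\loc}$ coming from the $L^2_{\loc}\pare{\bR_+;H^{\sfrac{3}{2}}}$ bound, already suffices for uniqueness since the initial difference vanishes.
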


\begin{proof}

Let us define the following truncation operator
\begin{equation*}
\cJ_n v = \cF^{-1}\pare{1_{\set{\frac{1}{n}\leqslant\av{\xi}\leqslant n}} \hat{v}\pare{\xi}}, 
\end{equation*}
which localize a tempered distribution $ v $ away from  low and  high frequencies. With such we can define the following sequence of approximating systems of \eqref{eq:Rosensweig2D}: 

\begin{equation}\label{eq:Rosensweig2D_approximated1}
\left\lbrace
\begin{aligned}
& \rho_0 \pare{\partial_t u_n +\cJ_n \pare{u_n \cdot \nabla u_n} } -\pare{\eta + \zeta} \Delta u_n + \nabla p_n = \mu_0 \  \cJ_n \pare{  M_n\cdot \nabla H_n} + 2\zeta \pare{\begin{array}{c}
\partial_2 \omega_n\\ -\partial_1\omega_n
\end{array}}, \\
& \rho_0 \kappa \pare{\partial_t \omega_n + \cJ_n\pare{ u_n\cdot \nabla \omega_n}} - \eta' \Delta \omega_n = \mu_0 \cJ_n \pare{ M_n\times H_n} + 2\zeta \pare{\curl \ u_n -2\omega_n}, \\
& \partial_t M_n + \cJ_n\pare{ u_n\cdot \nabla M_n} - \sigma \Delta M_n = \cJ_n\pare{\pare{
\begin{array}{c}
-M_{2, n}\\ M_{1, n}
\end{array}
}\omega_n} -\frac{1}{\tau}\pare{M_n-\chi_0 H_n }, \\
& \div\pare{H_n+ M_n}=\cJ_n F, \\
& \div \ u_n = \curl \ H_n =0, \\
& \left. \pare{u_n, \omega_n, M_n, H_n}\right|_{t=0}= \pare{\cJ_n u_0, \cJ_n\omega_0, \cJ_n M_0, \cJ_n H_0 }. 
\end{aligned}
\right.
\end{equation}

We want to rewrite the above system in a purely evolutionary form, such as it was for instance done in \cite{PZ12} for the $ Q $-tensor system.  To do so we have hence to incorporate th informations given by  the equations 
\begin{align*}
\div\pare{H_n+ M_n}=\cJ_n F && \text{and} && \div \ u_n = \curl \ H_n =0,
\end{align*}
in the evolution equation for $ u_n, \omega_n  $ and $ M_n $. Indeed using the relation \eqref{eq:H_in_fz_di_M} we can define $ H_n $ as a function of $ M_n $ and the external magnetic field as
\begin{equation}\label{eq:def_Hn}
H_n = -\cQ M_n +\cG_n, 
\end{equation}
where indeed $ \cG_n= \nabla\Delta^{-1} \cJ_n F $. Whence, denoting as $ \cP $ the Leray projector onto divergence-free vector fields, explicitly  defined as
\begin{equation*}
\begin{aligned}
\cP v &  =  \pare{1-\cQ}v, \\
 & = \pare{1- \Delta^{-1}\nabla\div} v, 
\end{aligned}
\end{equation*}
we can write the approximated system \eqref{eq:Rosensweig2D_approximated1} in a equivalent,  purely evolutionary form:

\begin{equation}\label{eq:Rosensweig2D_approximated}
\left\lbrace
\begin{aligned}
& 
\begin{multlined}
\rho_0 \pare{\Big. \partial_t u_n +\cP \cJ_n \pare{\cP u_n \cdot \nabla \cP u_n} } -\pare{\eta + \zeta} \Delta u_n \\
 = \mu_0 \  \cP \cJ_n \pare{\Big.   M_n\cdot \nabla \pare{-\cQ M_n +\cG_n}} + 2\zeta \cP \pare{\begin{array}{c}
\partial_2 \omega_n\\ -\partial_1\omega_n
\end{array}},
\end{multlined}
 \\[5mm]
&
\begin{multlined}
 \rho_0 \kappa \pare{\Big. \partial_t \omega_n + \cJ_n\pare{ \cP u_n\cdot \nabla \omega_n}} - \eta' \Delta \omega_n
 \\
 = \mu_0 \cJ_n \pare{ \Big.  M_n\times \pare{-\cQ M_n +\cG_n}} + 2\zeta \pare{ \curl \ \cP u_n -2\omega_n}, 
\end{multlined} 
 \\[5mm]
&
\begin{multlined}
 \partial_t M_n + \cJ_n\pare{ \cP u_n\cdot \nabla M_n} - \sigma \Delta M_n
\\
  = \cJ_n\pare{\pare{
\begin{array}{c}
-M_{2, n}\\ M_{1, n}
\end{array}
}\omega_n} -\frac{1}{\tau}\pare{M_n-\chi_0 \pare{-\cQ M_n +\cG_n} },
\end{multlined} 
 \\[5mm]
& \left. \pare{u_n, \omega_n, M_n}\right|_{t=0}= \pare{\cJ_n u_0, \cJ_n\omega_0, \cJ_n M_0 }. 
\end{aligned}
\right.
\end{equation}

Let us underline moreover that, being $ H_n $ defined via the equation \eqref{eq:def_Hn} the approximate magnetostatic equation in such setting reads as
\begin{equation*}
\begin{aligned}
\div\pare{H_n+M_n} & = \div\ M_n -\div\ \cQ M_n +\div\ \cG_n, 
\end{aligned}
\end{equation*}
but indeed $ \div\ M_n =\div\ \cQ M_n $, and since $ \cG_n = \Delta^{-1}\nabla \ \cJ_n F $ it is immediate that $ \div\ \cG_n = \cJ_n F $, whence we recover the fourth equation of \eqref{eq:Rosensweig2D_approximated1}. Moreover since by hypothesis $ \div \ u_0 =0 $ and the evolution equation of $ u_n $ can be written in the abstract form $ \partial_t u_n = \cP\ f \pare{u_n, x, t} $, for a suitable $ f $, it is hence assured that $ \div\ \bra{ u_n \pare{t}}=0 $ for any $ t $ in the (eventual) lifespan of $ u_n $.  \\

Let us now define the Hilbert space 
\begin{equation*}
H^{\sfrac{1}{2}}_n = \set{f\in H^{\frac{1}{2}}\pare{\bR^2} \ \left| \ \textnormal{Supp} \ \hat{f}\subset B_n\pare{0}\setminus B_{\sfrac{1}{n}}\pare{0}\right.  }, 
\end{equation*}
endowed with the $ H^{\frac{1}{2}}\pare{\bR^2} $ scalar product. \\

Indeed denoting $ V_n=\pare{u_n, \omega_n, M_n} $ we can say that the system \eqref{eq:Rosensweig2D_approximated} can be written in the autonomous form
\begin{equation*}
\frac{\d}{\d t} \ V_n = F_n\pare{V_n}, 
\end{equation*}
where $ F_n $ maps $ H^{\sfrac{1}{2}}_n $ onto itself. We can hence regard system \eqref{eq:Rosensweig2D_approximated} as an ordinary differential equation in $ H^{\sfrac{1}{2}}_n $ verifying the conditions of Cauchy-Lipschitz theorem. Being so we deduce the existence of a sequence of positive maximal lifespans $ \pare{T_n}_n $ such that, for each $ n $,  the system  \eqref{eq:Rosensweig2D_approximated} admits a unique maximal solution
\begin{equation*}
V_n \in \cC\pare{\left[0, T_n \right) ; H^{\sfrac{1}{2}}_n\cap H^{\infty}}.
\end{equation*}

The cancellation properties which allowed us to prove Lemma  \ref{lem:L2_energy_estimates} hold for the system \eqref{eq:Rosensweig2D_approximated} as well, whence following the same lines of the proofs of Lemma \ref{lem:L2_energy_estimates} and \ref{lem:H12_energy_estimates} we can prove similar uniform energy bounds, namely the following result holds true:

\begin{lemma}
\label{lem:energy_bounds_approximate_system}
Let $ u_0, \omega_0, M_0\in H^{\sfrac{1}{2}}, \ F\in L^{2}_{\loc}\pare{\bR_+; L^2}, \ \cG_F \in W^{1, \infty}_{\loc} \pare{\bR_+; H^{\sfrac{3}{2}}} $ and let $ \div \ u_0 =0 $. Fixed any  $ T>0 $  there exists a positive constant $ \mathbbl{c} =  \mathbbl{c}\pare{T, \rho_0, \eta, \zeta, \mu_0, \kappa, \eta' , \sigma ,  \tau } $ independent of $ n $ and  $ t\in\bra{0, T} $,  such that
\begin{align*}
\norm{\Big. \pare{ u_n, \omega_n, M_n}}_{L^\infty \pare{\bra{0, t}; H^{\frac{1}{2}}}} & \leqslant \mathbbl{c} , \\
\norm{\Big. \pare{ \nabla u_n, \nabla \omega_n, \nabla M_n}}_{L^2 \pare{\bra{0, t}; H^{\frac{1}{2}}}} &  \leqslant \frac{\mathbbl{c}}{c} \  , 
\end{align*}
where $ c $ is defined in \eqref{eq:def_c}. 
\end{lemma}

Considering hence the result stated in Lemma \ref{lem:energy_bounds_approximate_system},  fixed any positive, finite $ T > 0 $ we can  argue via a continuation argument in order to deduce that for each $ n $ the maximal lifespan of the unique solution of \eqref{eq:Rosensweig2D_approximated} is equal to  $ T $.  Since $ T $ is positive and arbitrary, we deduce that for each $ n $ 
 \begin{equation*}
V_n \in \cC \pare{ \bR_+ ; H^{\sfrac{1}{2}}_n}. 
\end{equation*}

\noindent Being moreover the energy bounds provided in Lemma \ref{lem:energy_bounds_approximate_system} uniform we can hence infer that the sequences $ \pare{V_n}_n, \pare{\nabla V_n}_n $ are, respectively, uniformly bounded in $ L^\infty_{\loc} \pare{\bR_+ ; H^{\sfrac{1}{2}} } $ and $ L^2_{\loc} \pare{\bR_+; H^{\sfrac{1}{2}} } $, from which we deduce, by interpolation, that the sequence 
\begin{equation*}
\pare{V_n}_n \ \text{is uniformly bounded in}\ L^2_{\loc} \pare{\bR_+; H^{\frac{3}{2}}\pare{\bR^2}}. 
\end{equation*}

This sort of uniform regularity is strong enough in order to provide a uniform bound for the sequence $ \pare{\partial_t V_n}_n $ in a space of the form $ L^2_{\loc}  \pare{\bR_+; H^\alpha} $, where $ \alpha < \sfrac{3}{2} $, possibly negative. \\

We already mentioned that the (approximated) Lorentz force $ \textsf{ F}^m_n = \mu_0 \  \cJ_n\pare{ \Big.  M_n \cdot \nabla\pare{-\cQ M_n +\cG_n}} $ is the less  regular term among all the nonlinear terms appearing in the system \eqref{eq:Rosensweig2D_approximated}, namely exploiting energy estimates only at a $ L^2 $ level (hence using the results provided by Lemma \ref{lem:L2_energy_estimates}) it is possible to prove only that $ \textsf{F}^m_n $ belongs (uniformly in $ n $) to $ L^1_{\loc}\pare{\bR_+; {H}^{-\frac{1}{2}}} $. This time regularity is not sufficiently strong in order to apply standard compactness results, such as Aubins-Lions lemma (cf. \cite{Aubin63}). For this reson hence we introduced in Lemma \ref{lem:H12_energy_estimates} the uniform $ \Hud $ energy estimates: such control on a higher level of derivatives will hence allow us to provide a uniform bound for the sequence $ \pare{\textsf{F}^m_n}_n $ in the $ L^2_{\loc} \pare{\bR_+; L^2} $ topology, making hence possible to apply Aubin-Lions lemma (cf. \cite{Aubin63}). \\

In order to bound the generic term $ \textsf{F}^m_n $ it suffice to remark that, given two vector fields $ \pare{a, b} \in H^{\frac{1}{2}}\pare{\bR^2}\times H^{\frac{1}{2}}\pare{\bR^2} $ the application
\begin{equation*}
\begin{aligned}
H^{\frac{1}{2}}\pare{\bR^2}\times H^{\frac{1}{2}}\pare{\bR^2} && \to && L^2 \pare{\bR^2}, \\
\pare{a, b} && \mapsto && a \otimes b, 
\end{aligned}
\end{equation*}
is continuous, hence we deduce that
\begin{equation*}
\begin{aligned}
\norm{\textsf{F}^m_n}_{L^2\pare{\bra{0, T}; L^2\pare{\bR^2}}} & \leqslant C \norm{M_n}_{L^\infty\pare{\bra{0, T};  H^{\frac{1}{2}}\pare{\bR^2} }}\norm{\nabla \pare{-\cQ M_n +\cG_n}}_{L^2\pare{\bra{0, T}; H^{\frac{1}{2}}\pare{\bR^2}  }},\\
& \leqslant C \norm{M_n}_{L^\infty\pare{\bra{0, T};  H^{\frac{1}{2}}\pare{\bR^2} }} \pare{\norm{\nabla  M_n}_{L^2\pare{\bra{0, T}; H^{\frac{1}{2}}\pare{\bR^2}  }}
+ \norm{\cG_n }_{W^{1, \infty}\pare{\bra{0, T}; H^{\frac{3}{2}}\pare{\bR^2}  }}
}, \\
& <\infty. 
\end{aligned} 
\end{equation*}
We implicitly used in the second inequality the fact that $ \cQ $ maps continuously $ H^{\sfrac{1}{2}} $ to itself. 
The right hand side of the above equation can be bounded, uniformly in $ n $,  in terms of the initial data thanks to the results stated in Lemma \ref{lem:energy_bounds_approximate_system}. \\

The remaining nonlinear approximate terms can be bounded using the global energy estimates at a $ L^2 $ level. Letting $ V_n = \pare{u_n, \omega_n, M_n} $ we remark that every nonlinear term appearing in \eqref{eq:Rosensweig2D_approximated} is either in the form
\begin{equation*}
u_n\cdot\nabla V_n, 
\end{equation*}
or either in the form 
\begin{equation*}
V_n\otimes V_n, 
\end{equation*}
whence it suffice to provide bounds for these two types of nonlinearities. \\

\noindent Being $ u_n $ divergence-free and using Lemma \ref{lem:prod_rules_Sobolev_2D} and an interpolation of Sobolev spaces we deduce the following bound
\begin{equation*}
\begin{aligned}
\norm{u_n \cdot \nabla V_n}_{L^2\pare{\bra{0, T}; H^{-1}}} & \leqslant \norm{u_n \otimes V_n}_{L^2\pare{\bra{0, T}; L^2}}, \\
& \leqslant \norm{ \norm{u_n}_{L^2}^{\sfrac{1}{2}} \norm{\nabla u_n}_{L^2}^{\sfrac{1}{2}}   \norm{V_n}_{L^2}^{\sfrac{1}{2}} \norm{\nabla V_n}_{L^2}^{\sfrac{1}{2}} }_{L^2\pare{\bra{0, T}}}, \\
 & \leqslant  \norm{u_n}_{L^\infty \pare{\bra{0, T};  L^2}}^{\sfrac{1}{2}} \norm{V_n}_{L^\infty \pare{\bra{0, T};  L^2}}^{\sfrac{1}{2}}
 \norm{\nabla u_n}_{L^2 \pare{\bra{0, T};  L^2}}^{\sfrac{1}{2}} \norm{\nabla V_n}_{L^2 \pare{\bra{0, T};  L^2}}^{\sfrac{1}{2}}<\infty, 
\end{aligned}
\end{equation*}
thanks to the results provided in Lemma \ref{lem:energy_bounds_approximate_system}. 
\\
While for the other kind of nonlinearity
\begin{equation*}
\begin{aligned}
\norm{V_n \otimes V_n}_{L^2\pare{\bra{0, T}; L^2}}
 & \leqslant  
\norm{V_n}_{L^\infty \pare{\bra{0, T} ; L^2}} \norm{\nabla V_n}_{L^2 \pare{\bra{0, T} ; L^2}} 
 <\infty. 
\end{aligned}
\end{equation*}

Applying hence Aubin-Lions lemma \cite{Aubin63} we state that the sequence 
\begin{equation*}
\pare{u_n, \omega_n, M_n}_{n\in\bN} \ \text{is compact in } \ L^2_{\loc} \pare{\bR_+; H^{\frac{3}{2}-\varepsilon}_{\loc} \pare{\bR^2}}, \ \forall \ \varepsilon > 0,
\end{equation*}
which implies that there exists at least one
\begin{equation*}
\pare{u, \omega, M}\in L^\infty_{\loc}\pare{\bR_+; H^{\frac{1}{2}}} \cap L^2_{\loc}\pare{\bR_+; H^{\frac{3}{2}}}, 
\end{equation*}
such  that the sequence $ \pare{u_n, \omega_n, M_n}_{n\in\bN} $ converges (up to relabeled subsequences if it may be) to
\begin{equation}\label{eq:conv_Leray}
\pare{u_n, \omega_n, M_n} \xrightarrow{n\to \infty} \pare{u, \omega, M} \text{ in } L^2_{\loc} \pare{\bR_+; H^{\frac{3}{2}-\varepsilon}_{\loc} \pare{\bR^2}}, \ \forall \varepsilon >0. 
\end{equation}
Such convergence is sufficiently strong in order to pass to the limit in the bilinear interactions in \eqref{eq:Rosensweig2D_approximated}. 

To prove that the limit element $ \pare{u, \omega, M} $ solves the system

\begin{equation}\label{eq:Rosensweig2D_limit}
\left\lbrace
\begin{aligned}
& 
\begin{multlined}
\rho_0 \pare{\Big. \partial_t u +\cP  \pare{\cP u \cdot \nabla \cP u} } -\pare{\eta + \zeta} \Delta u \\
 = \mu_0 \  \cP  \pare{\Big.   M\cdot \nabla \pare{-\cQ M +\cG_F}} + 2\zeta \cP \pare{\begin{array}{c}
\partial_2 \omega\\ -\partial_1\omega
\end{array}},
\end{multlined}
 \\[3mm]
&
\begin{multlined}
 \rho_0 \kappa \pare{\Big. \partial_t \omega + \pare{ \cP u\cdot \nabla \omega}} - \eta' \Delta \omega
 \\
 = \mu_0  \pare{ \Big.  M\times \pare{-\cQ M +\cG_F}} + 2\zeta \pare{ \curl \ \cP u -2\omega}, 
\end{multlined} 
 \\[5mm]
&
\begin{multlined}
 \partial_t M + \pare{ \cP u\cdot \nabla M} - \sigma \Delta M
\\
  = \pare{
\begin{array}{c}
-M_{2}\\ M_{1}
\end{array}
}\omega -\frac{1}{\tau}\pare{M-\chi_0 \pare{-\cQ M +\cG_F} },
\end{multlined} 
 \\[5mm]
& \left. \pare{u, \omega, M}\right|_{t=0}= \pare{ u_0, \omega_0, M_0 },
\end{aligned}
\right.
\end{equation}
in a weak sense is a rather standard procedure, facilitated by the rather high regularity of the convergence \eqref{eq:conv_Leray}. Among all the nonlinear interactions the convergence which is less immediate to prove is 
\begin{equation*}
M_n\cdot\nabla \cQ M_n \xrightarrow{n\to\infty} M\cdot\nabla \cQ M,.
\end{equation*}
Considering hence a test function $ \psi \in\cD\pare{\bR_+\times\bR^2} $
\begin{align*}
\int M_n\cdot\nabla \cQ M_n \ \psi \ \dx \d t- \int M\cdot\nabla \cQ M \ \psi \ \dx \d t & = \int \pare{M_n -M}\cdot \nabla \cQ M_n \ \psi \ \dx \d t + \int M\cdot\nabla \cQ \pare{M_n-M} \ \psi \ \dx \d t, \\
& = I_{1, n} + I_{2,n}. 
\end{align*}

Applying H\"older inequality we deduce the bound
\begin{align*}
I_{1, n} & \lesssim \norm{M_n - M}_{L^2_{\loc}\pare{\bR_+; L^4_{\loc}}} \norm{\nabla\cQ M_n}_{L^2_{\loc}\pare{\bR_+; L^2_{\loc}}} \norm{\psi}_{L^\infty\pare{\bR_+; L^4}}. 
\end{align*}
Standard Sobolev embeddings and \eqref{eq:conv_Leray} imply that
\begin{equation*}
\norm{M_n - M}_{L^2_{\loc}\pare{\bR_+; L^4_{\loc}}}\leqslant \norm{M_n - M}_{L^2_{\loc}\pare{\bR_+; H^{\sfrac{1}{2}}_{\loc}}}\to 0 \text{ as } n\to \infty, 
\end{equation*}
moreover since $ \nabla\cQ $ is  a pseudo-differential operator of order one we argue that
\begin{equation*}
\norm{\nabla\cQ M_n}_{L^2_{\loc}\pare{\bR_+; L^2_{\loc}}} \lesssim \norm{ M_n}_{L^2_{\loc}\pare{\bR_+; H^1_{\loc}}} < C < \infty, 
\end{equation*}
thanks to the results of Lemma \ref{lem:energy_bounds_approximate_system}, proving that $ I_{1, n}\xrightarrow{n\to \infty}0 $. \\
Similarly it can be proved that $ I_{2, n}\to 0 $ as $ n\to \infty $. \\

Defining hence 
\begin{equation*}
H = -\cQ M + \cG_F,
\end{equation*}
is straightforward to prove that considering $ \pare{u, \omega, M} $ the weak solution of \eqref{eq:Rosensweig2D_limit}, then $ \pare{u, \omega, M, H} $  solve weakly \eqref{eq:Rosensweig2D}.\\

The proof that the solutions constructed above are unique in the energy space $ L^\infty_{\loc}\pare{\bR_+; H^{\sfrac{1}{2}}} \cap L^2_{\loc}\pare{\bR_+; H^{\sfrac{3}{2}}} $ is postponed in Appendix \ref{sec:uniquess_weak}. 
\end{proof}

\section{Propagation of higher regularity} \label{sec:HE}

In this section we prove that given any initial data in $ H^k, k \in \bN $ it is hence possible to  construct global strong solutions for the system \eqref{eq:Rosensweig2D}. The result we prove is the following one

\begin{prop}\label{prop:propagation_higher_regularity}
Let $ k\geqslant 1 $,  $ u_0, \omega_0, M_0, H_0 \in H^k\pare{\bR^2} $  such that $ \div \ u_0 =0 $ and let $ \cG_F\in W^{1, \infty}_{\loc}\pare{\bR_+; H^{k+1}} $. The unique global weak solution $ U =\pare{u, \omega, M , H} $ of \eqref{eq:Rosensweig2D} identified in Proposition \ref{prop:existence_global_weak_H12_solutions}   enjoys the following additional regularity
\begin{equation*}
U\in \cC\pare{\bR_+; H^k \pare{\bR^2} }, \hspace{1cm} \nabla U \in L^2_{\loc} \pare{\bR_+; H^k\pare{\bR^2} }. 
\end{equation*}
\end{prop}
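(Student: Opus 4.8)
The plan is to combine an induction on $k$ with a single higher-order energy estimate closed by Grönwall's inequality. The base case $k=0$, i.e. $H^0=L^2$, is provided by Lemma \ref{lem:L2_energy_estimates}, and the intermediate $\Hud$-bound by Lemma \ref{lem:H12_energy_estimates}; assuming global propagation of $H^j$ regularity for every $j\in\set{0,\ldots,k-1}$, I would establish it at level $k$. A first systematic simplification is to eliminate $H$ from every estimate through the algebraic relation $H=-\cQ M+\cG_F$ of Lemma \ref{lem:reg_H}, so that each $H^s$-norm of $H$ is controlled by the corresponding norm of $M$ plus a fully explicit forcing built from $\cG_F$, which is admissible since $\cG_F\in W^{1,\infty}_{\loc}\pare{\bR_+;H^{k+1}}$.

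Next I would fix a multi-index $\alpha$ with $\av{\alpha}\leqslant k$, apply $\pa$ to each evolution equation of \eqref{eq:Rosensweig2D}, test against $\pa u$, $\pa\omega$, $\pa M$ (and, to handle the magnetic coupling, pair the $\pa$-differentiated $M$-equation against $\pa H$), and sum over $\alpha$ with the same weights as in the definitions of $\cE$ and $\cF$. The dissipative terms keep on the left-hand side the good quantities $\norm{\nabla\pa u}_{L^2}^2$, $\norm{\nabla\pa\omega}_{L^2}^2$, $\norm{\nabla\pa M}_{L^2}^2$ together with the zeroth-order damping from the relaxation term $-\tau^{-1}\pare{M-\chi_0 H}$. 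The transport contributions are treated by Lemma \ref{lem:higher_energy_est_transport_term}: the genuinely top-order piece $\ps{u\cdot\nabla\pa A}{\pa A}_{L^2}$ vanishes by $\div\,u=0$, while the commutator $\bra{\pa,u\cdot\nabla}A$ is bounded by products of lower-order Sobolev norms, finite by the inductive hypothesis and the bounds already proved.

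The main obstacle, exactly as flagged in Remark \ref{rem:propagation_HE}, is the Lorentz force $\textsf{F}^m=\mu_0\,M\cdot\nabla H$ together with its companion $\mu_0\,M\times H$ in the $\omega$-equation, since $\textsf{F}^m$ is not a transport term by the divergence-free velocity and therefore enjoys no self-cancellation at the $\pa$ level. Here I would reproduce at order $k$ the mechanism of the $L^2$ estimate: using $\curl\,H=0$ (hence $\curl\,\pa H=0$) and $\div\,u=0$ (hence $\div\,\pa u=0$), an integration by parts turns the top-order contribution $\ps{M\cdot\nabla\pa H}{\pa u}_{L^2}$ into $-\ps{\pare{\pa u}\cdot\nabla M}{\pa H}_{L^2}$, which is precisely one of the Leibniz pieces arising when the transport term of the $M$-equation is tested against $\pa H$; the two cancel. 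The surviving interaction with the $\omega$-equation is in turn annihilated by $\ps{\pa\pare{M\times H}}{\pa\omega}_{L^2}$ and the $\pare{-M_2,M_1}\,\omega$ coupling, exactly as in the lower-order proofs. The genuinely residual commutator remainders are what Lemma \ref{lem:higher_energy_est_bilinear_term} controls; I expect these to be the delicate terms, requiring the product rule of Lemma \ref{lem:product rule}, the interpolation inequality \eqref{eq:interpolation_inequality}, and repeated use of Lemma \ref{lem:reg_H} to trade $H$ for $M$ and $\cG_F$.

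Collecting all bounds I would reach a differential inequality of the form
\begin{equation*}
\frac{\d}{\d t}\,\cE_k(t)+c\,\cD_k(t)\leqslant g(t)\,\cE_k(t)+h(t),
\end{equation*}
where $\cE_k$ is the $H^k$ energy, $\cD_k$ the associated dissipation, and $g,h\in L^1_{\loc}\pare{\bR_+}$ thanks to the already-established $\Hud$ and lower-order $H^{k-1}$ controls together with $\cG_F\in W^{1,\infty}_{\loc}\pare{\bR_+;H^{k+1}}$. A Grönwall argument on every finite interval then yields $U\in L^\infty_{\loc}\pare{\bR_+;H^k}$ and $\nabla U\in L^2_{\loc}\pare{\bR_+;H^k}$, whence $U\in L^2_{\loc}\pare{\bR_+;H^{k+1}}$; reading $\partial_t U\in L^2_{\loc}\pare{\bR_+;H^{k-1}}$ off the equation, a standard interpolation argument upgrades this to $U\in\cC\pare{\bR_+;H^k}$. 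Uniqueness is inherited from Proposition \ref{prop:existence_global_weak_H12_solutions}, since the strong solution constructed here is in particular a weak solution in the energy space.
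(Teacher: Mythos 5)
Your proposal is correct in its overall architecture and shares the paper's skeleton — induction on $k$ with lower-order bounds $F_j, G_j$, elimination of $H$ via $H=-\cQ M+\cG_F$ (Lemma \ref{lem:reg_H}), the commutator Lemmas \ref{lem:higher_energy_est_transport_term} and \ref{lem:higher_energy_est_bilinear_term}, and a final Gr\"onwall argument — but it diverges from the paper precisely at the point you identify as the main obstacle. The paper does \emph{not} reproduce the $L^2$ cancellation at order $k$: it never tests the $\pa$-differentiated $M$-equation against $\pa H$. Instead, it estimates the Lorentz force $\ps{\pa\pare{M\cdot\nabla H}}{\pa u}_{L^2}$ and the torque $\ps{\pa\pare{M\times H}}{\pa\omega}_{L^2}$ \emph{directly} through Lemmas \ref{lem:higher_energy_est_transport_term} and \ref{lem:higher_energy_est_bilinear_term}, then uses Lemma \ref{lem:reg_H} to replace $\norm{\nabla\pa H}_{L^2}$ by $\norm{\nabla\pa M}_{L^2}+\norm{\nabla\pa\cG_F}_{L^2}$, and absorbs the resulting $\epsilon$-weighted top-order terms into the dissipation $\sigma\norm{\nabla\pa M}_{L^2}^2$ coming from the Bloch--Torrey regularization. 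In other words, the full parabolicity ($\sigma>0$) makes the cancellation you engineer unnecessary; this is exactly what already happens in the $\Hud$ estimate of Lemma \ref{lem:H12_energy_estimates}, where the Lorentz force is handled by the direct bound of Lemma \ref{lem:bound_Lorentz_force} rather than by the $L^2$-type cancellation. Your route buys structural robustness (it mirrors the only mechanism available at the $L^2$ level, where no absorption is possible), at the cost of extra bookkeeping that your sketch glosses over: testing against $\pa H$ forces you to convert $\ps{\pa\partial_t M}{\pa H}_{L^2}$ via the time-differentiated magnetostatic equation, to handle $-\sigma\ps{\Delta\pa M}{\pa H}_{L^2}$ via $\div\pare{M+H}=F$, and — most importantly — the pairwise cancellations you invoke (Lorentz force against the transport term, and $M\times H$ against $M^\perp\omega$) are exact only for the single Leibniz piece in which all derivatives fall on the matching factor; all remaining pieces survive as commutators. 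You do acknowledge that these residuals require Lemma \ref{lem:higher_energy_est_bilinear_term}, so the argument closes, but the phrase ``exactly as in the lower-order proofs'' should be read as ``up to commutator remainders.'' Your concluding step (continuity in time from $\partial_t U\in L^2_{\loc}\pare{\bR_+;H^{k-1}}$ plus interpolation, and uniqueness inherited from Proposition \ref{prop:existence_global_weak_H12_solutions}) matches the paper's intent, and is in fact spelled out slightly more completely than in the paper itself.
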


\begin{rem}
Indeed if $ k>1+\ell + \rho $ where $ \ell \in\bN $ and $ \rho\in\bra{0, 1} $ the space $ H^k\pare{\bR^2} $ embeds continuously  in the H\"older space $ \cC^{\ell, \rho}\pare{\bR^2} $, whence Proposition \ref{prop:propagation_higher_regularity} implies the propagation of smoothness. \fine
\end{rem}

\begin{rem}\label{rem:propagation_HE}
In Proposition \ref{prop:existence_global_weak_H12_solutions} we proved that $ U $ weak solutions of \eqref{eq:Rosensweig2D} is such that
\begin{equation*}
U \in L^2_{\loc} \pare{\bR_+; H^{\frac{3}{2}}\pare{\bR^2}}, 
\end{equation*}
whence considering the embedding $ H^{\frac{3}{2}}\pare{\bR^2} \hra L^\infty \pare{\bR^2} $ we conclude that $ U \in L^2_{\loc} \pare{\bR_+; L^\infty \pare{\bR^2}} $.\\

Let us now consider $ u $ to be a solution of the two-dimensional incompressible \NS\ equations. If we suppose $ u $ to be in the space $ L^2_{\loc} \pare{\bR_+; L^\infty \pare{\bR^2}} $ it is rather easy to deduce that such regularity for the unknown $ u $ is sufficient in order to immediately deduce global propagation of $ H^k\pare{\bR^2}, \ k\in\bN, \ k\geqslant 1 $ regularity for the \NS\ incompressible equations. Being in fact true that
\begin{equation*}
\norm{a \ b}_{H^k} \lesssim \norm{a}_{H^k} \norm{b}_{L^\infty} + \norm{a}_{L^\infty} \norm{b}_{H^k}, 
\end{equation*}
for a proof of which we refer to \cite[Corollary 2.54, p. 90]{BCD}, we can argue that
\begin{equation}
\label{eq:estimate_NSI}
\begin{aligned}
\av{\ps{u\cdot\nabla u}{u}_{H^k}}& \leqslant \av{\ps{u \otimes u}{\nabla u}_{H^k}}, \\
& \leqslant \frac{\nu}{2}\norm{\nabla u}_{H^k}^2 + C \norm{u}_{L^\infty}^2 \norm{u}_{H^k}^2, 
\end{aligned}
\end{equation}
and such kind of term can be absorbed and controlled with standard parabolic energy estimates. \\
\noindent
For the system \eqref{eq:Rosensweig2D} though the global propagation of high-order regularity is not such an immediate deduction. We explain such defect of regularity considering only the perturbations generated by the Lorentz force
\begin{equation*}
\textsf{F}^m = \mu_0 \ M\cdot \nabla H.
\end{equation*} 
Let us recall in fact that in the Lorentz force the vector field $ M $ is \textit{not divergence-free}, hence the commutation of derivatives performed in \eqref{eq:estimate_NSI} cannot be done in such setting. Performing similar computations as the ones above one convinces himself that, setting $ W =\pare{M, H} $, one must have at least $ W\in L^2_{\loc} \pare{\bR_+; L^\infty \pare{\bR^2}} \cap L^1_{\loc}\pare{\bR_+;W^{1, \infty}\pare{\bR^2}} $ (which amounts to require $ W\in L^2_{\loc} \pare{\bR_+; H^{1+\eta} \pare{\bR^2}} \cap L^1_{\loc}\pare{\bR_+; H^{2+\eta}\pare{\bR^2}}, \eta >0 $ in terms of Sobolev regularity\footnote{Sharper criteria could be deduced.}) in order mimic the procedure explained above for the incompressible \NS\ equations.   \fine
\end{rem}
\hspace{5mm}

Indeed in order to prove Proposition \ref{prop:propagation_higher_regularity} it suffice to perform some $ H^k $ energy estimates on the system \eqref{eq:Rosensweig2D}. In this spirit the following auxiliary lemma will be used:

\begin{lemma}\label{lem:higher_energy_est_transport_term}
Let $ a, b, c \in L^\infty_{\loc} \pare{\bR_+; H^k \pare{\bR^2} } $ and $ \nabla a, \nabla b, \nabla c \in L^2_{\loc} \pare{\bR_+; H^k\pare{\bR^2} } $ be such that for each $ j\in\set{0, \ldots, k-1} $ there exists two functions $ \pare{F_j, G_j}\in L^\infty_{\loc}\pare{\bR_+} \times  L^2_{\loc}\pare{\bR_+} $
  such that
 \begin{equation}
 \label{eq:technical_bound}
\begin{aligned}
\norm{\Big. \pare{a, b}\pare{t}}_{H^j} & \leqslant F_j\pare{t},\\
\norm{\Big. \nabla \pare{a, b}\pare{t}}_{H^j} & \leqslant G_j\pare{t},\end{aligned}
 \end{equation}
then for each $ \epsilon > 0 $ and each multi-index $ \alpha $ such that $ \av{\alpha}=k $ the following inequality holds true
\begin{multline}\label{eq:bound_Leibniz_rule}
\av{ \int \pa \pare{a\cdot \nabla b}\ \cdot \ \pa c \ \dx} \leqslant \epsilon \pare{\norm{\nabla \pa  a}_{L^2}^2 + \norm{\nabla \pa  b}_{L^2}^2 + \norm{\nabla \pa  c}_{L^2}^2 }\\
+ \frac{C_k}{\epsilon} \ \pare{\sum_{\ell=0}^{k-1}F_\ell^2 G_\ell^2 
+ G_0^2 } \norm{ \pa  c}_{L^2}^2 +\frac{C_k}{\epsilon}\ G_0^2 \norm{ \pa  a}_{L^2}^2
+ \frac{C_k}{\epsilon} \sum_{\ell=1}^{k-1} G_\ell^2, 
\end{multline}
where $ \pa = \partial_1^{\alpha_1}\partial_2^{\alpha_2} $. 
\end{lemma}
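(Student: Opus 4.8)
The plan is to expand $\pa\pare{a\cdot\nabla b}$ by the Leibniz rule and to redistribute the derivatives so that each of the resulting trilinear integrals against $\pa c$ can be bounded by a product in which \emph{at most one} factor carries $k+1$ derivatives — to be absorbed into the $\epsilon$--reservoir — while the remaining factors are either of order $\leqslant k-1$, hence controlled through \eqref{eq:technical_bound} by $F_\ell,G_\ell$, or are the two borderline quantities $\norm{\pa a}_{L^2}$ and $\norm{\pa c}_{L^2}$ that are allowed to appear explicitly on the right-hand side. Writing $a\cdot\nabla b=\sum_i a_i\partial_i b$ and $\pa\pare{a_i\partial_i b}=\sum_{\beta\leqslant\alpha}\binom{\alpha}{\beta}\partial^\beta a_i\,\partial^{\alpha-\beta}\partial_i b$, I would isolate the two extreme contributions $\beta=0$ (all derivatives on $b$) and $\beta=\alpha$ (all derivatives on $a$), and group the remaining $1\leqslant\av{\beta}\leqslant k-1$ into an intermediate block.

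For the intermediate and the $\beta=\alpha$ blocks I would test against $\pa c$ and use the two-dimensional Hölder splitting $\int fgh\leqslant\norm{f}_{L^4}\norm{g}_{L^4}\norm{h}_{L^2}$, keeping $\pa c$ in $L^2$; the inequality $\norm{v}_{L^4}\lesssim\norm{v}_{L^2}^{\sfrac12}\norm{\nabla v}_{L^2}^{\sfrac12}$ stated earlier then turns each $L^4$ factor into the geometric mean of an $L^2$ norm and a gradient $L^2$ norm. A factor $\partial^\beta a$ of order $\av{\beta}\leqslant k-1$ yields $\norm{\partial^\beta a}_{L^2}^{\sfrac12}\norm{\nabla\partial^\beta a}_{L^2}^{\sfrac12}\leqslant F_{\av\beta}^{\sfrac12}G_{\av\beta}^{\sfrac12}$ by \eqref{eq:technical_bound}, while the $\beta=\alpha$ block produces the borderline factor $\norm{\pa a}_{L^2}^{\sfrac12}\norm{\nabla\pa a}_{L^2}^{\sfrac12}$ together with $\norm{\nabla b}_{L^2}\leqslant G_0$; a final Young inequality absorbs every order-$(k+1)$ norm $\norm{\nabla\pa a}_{L^2},\norm{\nabla\pa b}_{L^2},\norm{\nabla\pa c}_{L^2}$ into the $\epsilon$--term and leaves exactly the contributions $G_0^2\norm{\pa a}_{L^2}^2$, $\pare{\sum_\ell F_\ell^2 G_\ell^2}\norm{\pa c}_{L^2}^2$ and $\sum_{\ell=1}^{k-1}G_\ell^2$. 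The mid-order quantity $\norm{\pa b}_{L^2}$, which is deliberately \emph{not} permitted on the right-hand side, must never be kept as such: whenever it surfaces I would interpolate $\norm{\pa b}_{L^2}^2\lesssim\norm{b}_{H^{k-1}}\norm{\nabla\pa b}_{L^2}\leqslant F_{k-1}\norm{\nabla\pa b}_{L^2}$ and send the gradient back into the $\epsilon$--reservoir (producing an $F_{k-1}^2G_{k-1}^2$ type term).

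The genuinely delicate piece is the top-order transport contribution $\int\pare{a\cdot\nabla\pa b}\cdot\pa c\,\dx$ coming from $\beta=0$ (and its miniature version at the boundary index $\av\beta=1$, where the $b$--factor already reaches order $k$). Contrary to the incompressible Navier–Stokes situation recalled in Remark \ref{rem:propagation_HE}, here $a$ carries \emph{no} divergence-free structure, so this term does not cancel under integration by parts and must be estimated head-on. I would bound it by $\norm{a}_{L^\infty}\norm{\nabla\pa b}_{L^2}\norm{\pa c}_{L^2}$ and tame $\norm{a}_{L^\infty}$ through the interpolation inequality of Lemma \ref{lem:interpolation_inequality} — equivalently through the splitting $\norm{a}_{L^4}\lesssim\norm{a}_{L^2}^{\sfrac12}\norm{\nabla a}_{L^2}^{\sfrac12}\leqslant F_0^{\sfrac12}G_0^{\sfrac12}$ paired with $\norm{\pa c}_{L^4}$ — so that the undifferentiated factor is traded for the admissible low-order datum $F_0^2 G_0^2$ multiplying $\norm{\pa c}_{L^2}^2$. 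Since in this configuration two order-$(k+1)$ factors may appear simultaneously, namely $\norm{\nabla\pa b}_{L^2}$ and (through $\norm{\pa c}_{L^4}$) a half-power of $\norm{\nabla\pa c}_{L^2}$, the absorption requires a slightly more careful iterated Young inequality; the precise power of $\epsilon$ it generates is immaterial, because in the application $\epsilon$ is fixed once and for all against the parabolic dissipation, so that $C_k/\epsilon$ is then a fixed constant.

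The main obstacle is therefore twofold and essentially of a bookkeeping nature. First, the absence of incompressibility forces the direct treatment of the highest-order transport term, where Lemma \ref{lem:interpolation_inequality} is indispensable to exchange the $L^\infty$ norm of the undifferentiated factor for a product of a low-order norm (absorbed by $F,G$) and a high-order norm (absorbed by $\epsilon$); this is precisely the mechanism that fails in the naive commutator approach when $k$ is small. Second, because $\norm{\pa b}_{L^2}$ is excluded from the right-hand side, one must be disciplined in always interpolating it away rather than letting it survive. Once these two points are respected, collecting the finitely many contributions — with a combinatorial constant $C_k$ accounting for the number of multi-indices $\beta\leqslant\alpha$ and for comparing the various order-$(k+1)$ derivatives with $\norm{\nabla\pa a}_{L^2},\norm{\nabla\pa b}_{L^2},\norm{\nabla\pa c}_{L^2}$ — assembles exactly into \eqref{eq:bound_Leibniz_rule}.
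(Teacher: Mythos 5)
You follow essentially the same skeleton as the paper's proof: Leibniz expansion of $\pa\pare{a\cdot\nabla b}$, the three--block case analysis ($\beta=0$, intermediate, $\beta=\alpha$), the two--dimensional interpolation $\norm{v}_{L^4}\lesssim\norm{v}_{L^2}^{\sfrac{1}{2}}\norm{\nabla v}_{L^2}^{\sfrac{1}{2}}$ combined with \eqref{eq:technical_bound}, and Young's inequality to feed all order-$(k+1)$ derivatives into the $\epsilon$--term; in particular your second treatment of the $\beta=0$ term ($\norm{a}_{L^4}\lesssim F_0^{\sfrac{1}{2}}G_0^{\sfrac{1}{2}}$ paired with $\norm{\pa c}_{L^4}$ and an iterated Young inequality) is exactly the paper's estimate, and your diagnosis that the absence of a divergence-free structure forces this head-on treatment is correct.

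However, your announced distribution of Lebesgue exponents — ``keep $\pa c$ in $L^2$ and send the other two factors to $L^4$'' — would fail if followed literally, and the reason is worth making explicit. The only order-$(k+1)$ quantities the $\epsilon$--reservoir can absorb are $\norm{\nabla\pa a}_{L^2}$, $\norm{\nabla\pa b}_{L^2}$, $\norm{\nabla\pa c}_{L^2}$ for the \emph{given} multi-index $\alpha$; a generic derivative of order $k+1$ is not dominated by these. For instance, with $\alpha=\pare{k,0}$ the derivative $\partial_2^2\partial_1^{k-1}b$ is not of the form $\partial_j\pa b$, and no bound $\norm{\partial_2^2\partial_1^{k-1}b}_{L^2}\lesssim\norm{\nabla\pa b}_{L^2}$ holds (compare the Fourier symbols along $\av{\xi_2}\gg\av{\xi_1}$). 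Consequently, placing $\nabla\partial^\gamma b$ in $L^4$ at the borderline index $\av{\gamma}=k-1$ costs a half power of $\norm{\nabla\nabla\partial^\gamma b}_{L^2}$, which is precisely such an inadmissible quantity; the same defect appears in the $\beta=\alpha$ block when $k=1$, where $\norm{\nabla b}_{L^4}$ costs $\norm{\nabla^2 b}_{L^2}^{\sfrac{1}{2}}$. Nor is the $L^\infty$ route through Lemma \ref{lem:interpolation_inequality} ``equivalent'': it requires $\norm{\Lambda^{\sfrac{3}{2}}a}_{L^2}$, which for $k=1$ again involves the full set of second derivatives of $a$. The paper's proof avoids all of this by one uniform choice in every block: the factor carrying the gradient of $b$ stays in $L^2$ (bounded by $G_{\av{\gamma}}$ when $\av{\gamma}\leqslant k-1$, or equal to $\norm{\nabla\pa b}_{L^2}$ and absorbed when $\gamma=\alpha$), while $\partial^\beta a$ and $\pa c$ are the two factors sent to $L^4$ — this is also what your listed factors for the $\beta=\alpha$ block ($\norm{\pa a}_{L^2}^{\sfrac{1}{2}}\norm{\nabla\pa a}_{L^2}^{\sfrac{1}{2}}$ together with $\norm{\nabla b}_{L^2}\leqslant G_0$) implicitly presuppose, contradicting your own ``$\pa c$ in $L^2$'' prescription. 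Adopting the paper's split throughout repairs the argument; note finally that in the genuinely intermediate range your split yields coefficients of the form $F_\ell G_\ell$ and $G_\ell G_{\ell+1}$ rather than the squared ones of \eqref{eq:bound_Leibniz_rule} — harmless for the subsequent Gronwall argument, but not the inequality as stated.
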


The proof of Lemma \ref{lem:higher_energy_est_transport_term} is postponed to Appendix \ref{sec:pf_lem_higher_energy_est_transport_term} for the sake of readability. \\

In an analogous way as we prove Lemma \ref{lem:higher_energy_est_transport_term} we can prove the following result

\begin{lemma}\label{lem:higher_energy_est_bilinear_term}
Let $ a, b, c $ satisfy the same hypothesis as in Lemma \ref{lem:higher_energy_est_transport_term}, then
\begin{multline} \label{eq:bilinear_bound_high_energy}
\av{\int \pa \pare{a \  b} \ \pa c \ \dx}  \leqslant
\epsilon \pare{ \norm{\nabla \pa a}_{L^2}^2 + \norm{\nabla \pa b}_{L^2}^2 + 2\norm{\nabla \pa c}_{L^2}^2 }
\\
+ C_k \pare{ 
\sum_{\ell = 1}^{k-1} G_{\ell-1}^{\sfrac{1}{2}} G_{\ell}^{\sfrac{1}{2}} G_{k-\ell-1}^{\sfrac{1}{2}} G_{k-\ell}^{\sfrac{1}{2}}
} \norm{\pa c}_{L^2} 
+
\frac{C_k}{\epsilon} \ G_{k-1}^2 \pare{\norm{a}_{L^2}^2 + \norm{b}_{L^2}^2 }. 
\end{multline}
\end{lemma}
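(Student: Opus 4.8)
The plan is to mirror the argument announced for Lemma \ref{lem:higher_energy_est_transport_term}: expand $\pa\pare{ab}$ by the Leibniz rule and split the resulting sum into \emph{interior} terms, in which each factor carries at least one derivative, and two \emph{extreme} terms, in which the whole multi-index falls on a single factor. Writing
\[
\pa\pare{ab}=\sum_{\beta\leqslant\alpha}\binom{\alpha}{\beta}\partial^\beta a\ \partial^{\alpha-\beta}b ,
\]
the interior terms are those with $1\leqslant\av{\beta}\leqslant k-1$, while the extreme ones are $\beta=0$ and $\beta=\alpha$. Since $\alpha$ runs over a finite set of multi-indices with $\av{\alpha}=k$ and the binomial coefficients are bounded by a constant $C_k$ depending only on $k$, it is enough to estimate one representative of each type and absorb the number of terms into $C_k$.

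For an interior term with $\av{\beta}=\ell$, $1\leqslant\ell\leqslant k-1$, I would pair $\partial^\beta a\ \partial^{\alpha-\beta}b$ against $\pa c$ by H\"older's inequality in the form $L^4\times L^4\times L^2$, and then use the two-dimensional Ladyzhenskaya (Gagliardo--Nirenberg) inequality recalled above in the guise $\norm{\partial^\gamma f}_{L^4}\leqslant C\norm{\partial^\gamma f}_{L^2}^{\sfrac12}\norm{\nabla\partial^\gamma f}_{L^2}^{\sfrac12}$. Invoking the hypotheses \eqref{eq:technical_bound} to replace $\norm{\partial^\beta a}_{L^2}\leqslant G_{\ell-1}$, $\norm{\nabla\partial^\beta a}_{L^2}\leqslant G_{\ell}$ and, symmetrically, $\norm{\partial^{\alpha-\beta}b}_{L^2}\leqslant G_{k-\ell-1}$, $\norm{\nabla\partial^{\alpha-\beta}b}_{L^2}\leqslant G_{k-\ell}$ (all indices lying in the admissible range $0,\dots,k-1$ precisely because $1\leqslant\ell\leqslant k-1$), each such term is bounded by $C_k\,G_{\ell-1}^{\sfrac12}G_{\ell}^{\sfrac12}G_{k-\ell-1}^{\sfrac12}G_{k-\ell}^{\sfrac12}\norm{\pa c}_{L^2}$. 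Summing over $\ell$ yields exactly the middle group of terms in \eqref{eq:bilinear_bound_high_energy}.

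The genuinely delicate part, and the main obstacle, is the pair of extreme terms $\int a\ \pa b\ \pa c\ \dx$ and $\int \pare{\pa a}\, b\ \pa c\ \dx$. In contrast with the transport situation of Lemma \ref{lem:higher_energy_est_transport_term}, neither $a$ nor $b$ is divergence free, so there is no algebraic cancellation to exploit and one is left with a genuine top-order derivative sitting on a factor other than $c$. The idea is to integrate by parts a single derivative: writing $\pa=\partial_i\partial^{\alpha-e_i}$ and shifting $\partial_i$ off the highest factor, one trades $\int a\ \pa b\ \pa c\ \dx$ for a term of interior type (already handled) plus a term of the shape $\int a\ \partial^{\alpha-e_i}b\ \partial_i\pa c\ \dx$, in which the full gradient $\nabla\pa c$ now appears. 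In the latter I would apply H\"older together with the interpolation inequality of Lemma \ref{lem:interpolation_inequality} to control the middle, order-$(k-1)$ factor in $L^\infty$, and then a Young inequality to absorb $\norm{\nabla\pa c}_{L^2}^2$, and symmetrically $\norm{\nabla\pa a}_{L^2}^2$ and $\norm{\nabla\pa b}_{L^2}^2$, into the $\epsilon$-terms, the factor $2$ in front of $\norm{\nabla\pa c}_{L^2}^2$ accounting for the two extreme contributions. The remaining lowest-order factors are then estimated purely through the time function $G_{k-1}$ and the norms $\norm{a}_{L^2}$, $\norm{b}_{L^2}$, producing the forcing term $\frac{C_k}{\epsilon}\,G_{k-1}^2\pare{\norm{a}_{L^2}^2+\norm{b}_{L^2}^2}$. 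Collecting the interior and extreme contributions and using the finiteness of the Leibniz expansion gives \eqref{eq:bilinear_bound_high_energy}; as for Lemma \ref{lem:higher_energy_est_transport_term}, the detailed bookkeeping of the interpolation exponents in the extreme terms is the step that must be carried out with care and would naturally be deferred to the appendix.
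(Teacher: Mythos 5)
Your estimate of the interior terms ($\av{\beta},\av{\gamma}\geqslant 1$) coincides with the paper's: H\"older in $L^4\times L^4\times L^2$, the Ladyzhenskaya inequality, and the bounds \eqref{eq:technical_bound}; that part is correct. The gap lies exactly in the two extreme terms, whose bookkeeping you defer. First, your motivation rests on a false premise: the paper's proof of Lemma \ref{lem:higher_energy_est_transport_term} nowhere uses a divergence-free cancellation (none is assumed there), so there is no structure that needs to be "replaced" by an integration by parts. Second, and decisively, the route you sketch does not produce \eqref{eq:bilinear_bound_high_energy}. After writing $\int a\,\pa b\,\pa c\ \dx=-\int \partial_i a\ \partial^{\alpha-e_i}b\ \pa c\ \dx-\int a\ \partial^{\alpha-e_i}b\ \partial_i\pa c\ \dx$ (with $e_i$ the $i$-th coordinate multi-index) and putting $\partial^{\alpha-e_i}b$ in $L^\infty$ via Lemma \ref{lem:interpolation_inequality}, you must control $\norm{\Lambda^{\sfrac{3}{2}}\partial^{\alpha-e_i}b}_{L^2}$, i.e. derivatives of $b$ of order $k+\sfrac{1}{2}$; interpolating this down forces a factor $\norm{\Lambda^{2}\partial^{\alpha-e_i}b}_{L^2}^{\sfrac{1}{2}}$, and $\Lambda^{2}\partial^{\alpha-e_i}b$ contains derivatives of order $k+1$ (for instance $\partial_2^2\partial_1^{k-1}b$ when $\alpha=(k,0)$) which are \emph{not} of the form $\nabla\pa b$ for the fixed multi-index $\alpha$, hence cannot be absorbed into the term $\epsilon\norm{\nabla\pa b}_{L^2}^2$ of the statement. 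Even setting this aside, the Young bookkeeping comes out wrong: the top-order factors appear as $\norm{\nabla\pa b}_{L^2}^{\sfrac{1}{4}}\norm{\nabla\pa c}_{L^2}$, and absorbing them (Young exponents $8$, $2$ and $\sfrac{8}{3}$) leaves a remainder of the shape $\epsilon^{-\sfrac{5}{3}}\norm{a}_{L^2}^{\sfrac{8}{3}}G_{k-2}^{\sfrac{2}{3}}G_{k-1}^{\sfrac{4}{3}}$, which is not the claimed $\frac{C_k}{\epsilon}\,G_{k-1}^2\pare{\norm{a}_{L^2}^2+\norm{b}_{L^2}^2}$.

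What you are missing is that the extreme terms are in fact the easy ones, because the hypothesis already controls the top-order $L^2$ norms: since $\av{\alpha}=k$, one has $\norm{\pa a}_{L^2}\leqslant\norm{\nabla a}_{H^{k-1}}\leqslant G_{k-1}$ and likewise $\norm{\pa c}_{L^2}\leqslant G_{k-1}$. So no derivative needs to be moved at all; the paper estimates directly
\begin{equation*}
\av{\int \pa a\ b\ \pa c\ \dx}\leqslant \norm{\pa a}_{L^4}\norm{b}_{L^2}\norm{\pa c}_{L^4}\leqslant \norm{\pa a}_{L^2}^{\sfrac{1}{2}}\norm{\nabla\pa a}_{L^2}^{\sfrac{1}{2}}\norm{\pa c}_{L^2}^{\sfrac{1}{2}}\norm{\nabla\pa c}_{L^2}^{\sfrac{1}{2}}\norm{b}_{L^2},
\end{equation*}
and a single Young inequality sends $\norm{\nabla\pa a}_{L^2}^{\sfrac{1}{2}}\norm{\nabla\pa c}_{L^2}^{\sfrac{1}{2}}$ into $\tilde{\epsilon}\pare{\norm{\nabla\pa a}_{L^2}^2+\norm{\nabla\pa c}_{L^2}^2}$, while the complementary factor squares to $\norm{\pa a}_{L^2}\norm{\pa c}_{L^2}\norm{b}_{L^2}^2\leqslant G_{k-1}^2\norm{b}_{L^2}^2$, which is exactly the stated forcing term; the symmetric term gives $G_{k-1}^2\norm{a}_{L^2}^2$, and the two appearances of $\norm{\nabla\pa c}_{L^2}^2$ account for the factor $2$ in \eqref{eq:bilinear_bound_high_energy}. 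Your interior-term estimate combined with this direct H\"older--Ladyzhenskaya--Young argument is the whole proof.
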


The proof is postponed in Appendix \ref{sec:pf_lem_higher_energy_est_bilinear_term}. \\

\textit{Proof of Proposition \ref{prop:propagation_higher_regularity} :} 
If $ k=0 $ Proposition \ref{prop:existence_global_weak_H12_solutions} proves the claim, hence without loss of generality we can assume that $ k>0 $. \\

We  prove the claim with an inductive hypothesis on $ k $, since, as explained above, we can assume that $ k \geqslant 1 $. We assume that:

\begin{hyp}\label{hyp:induction}
For each $ j\in \set{0, \ldots, k-1}  $ we suppose  $ U\in \cC\pare{\bR_+; H^j \pare{\bR^2} } $ and $ \nabla U \in L^2_{\loc} \pare{\bR_+; H^j\pare{\bR^2} } $. Moreover for each $ j\in \set{0, \ldots, k-1}  $ there exist two functions $ F_j $ and $ G_j $ which are respectively $ L^\infty_\loc\pare{\bR_+} $ and $ L^2_{\loc} \pare{\bR_+} $ such that, for each $ t\geqslant 0 $
\begin{align*}
\norm{U\pare{t}}_{H^j } \leqslant F_j\pare{t}, && \norm{\nabla U\pare{t}}_{H^j } \leqslant G_j\pare{t}. 
\end{align*}
\fine
\end{hyp}

Let us hence consider a multi-index $ \alpha =\pare{\alpha_1, \alpha_2} $ such that $ \av{\alpha}=k $, let us apply the operator $ \partial^\alpha $ to the equation of $ u $ in \eqref{eq:Rosensweig2D}, let us multiply the resulting equation for $ \partial^\alpha u $ and let integrate in space obtaining the inequality
\begin{multline}\label{eq:bu1}
\frac{\rho_0}{2}\ \frac{\d}{\d t} \norm{\partial^\alpha u }_{L^2}^2 + \pare{\eta + \zeta} \norm{\nabla \partial^\alpha u }_{L^2}^2 
\\
\leqslant \av{\ps{\pa   \pare{u\cdot \nabla u}}{\pa u}_{L^2}} + \mu_0 \av{\ps{\pa \pare{M\cdot \nabla H}}{u}_{L^2}} +  2\zeta \av{\ps{\pare{\begin{array}{c}
\partial_2 \pa \omega\\ -\partial_1\pa \omega
\end{array}}}{\pa u}_{L^2}}. 
\end{multline}

We hence apply the inequality \eqref{eq:bound_Leibniz_rule} to the terms $ \av{\ps{\pa   \pare{u\cdot \nabla u}}{\pa u}_{L^2}} $ and $ \mu_0 \av{\ps{\pa \pare{M\cdot \nabla H}}{u}_{L^2}} $ obtaining the bounds
\begin{align}
\label{eq:bu2}
\av{\ps{\pa   \pare{u\cdot \nabla u}}{\pa u}_{L^2}} & \leqslant \epsilon \norm{\nabla \pa u}_{L^2}^2 + \frac{C}{\epsilon} \pare{\sum_{\ell=0}^{k-1}F_\ell^2 G_\ell^2 
+ G_0^2 } \norm{ \pa  u}_{L^2}^2 + \sum_{\ell=1}^{k-1} G_\ell^2, \\
\mu_0 \av{\ps{\pa \pare{M\cdot \nabla H}}{u}_{L^2}} & \leqslant  \epsilon \pare{\norm{\nabla \pa  M}_{L^2}^2 + \norm{\nabla \pa  H}_{L^2}^2 + \norm{\nabla \pa  u}_{L^2}^2 }\nonumber \\
& + \frac{C}{\epsilon}\bra{ \ \pare{\sum_{\ell=0}^{k-1}F_\ell^2 G_\ell^2 
+  G_0^2 } \norm{ \pa  u}_{L^2}^2 +\ G_0^2 \norm{ \pa  M}_{L^2}^2}
+ \frac{C}{\epsilon} \sum_{\ell=1}^{k-1} G_\ell^2.\nonumber
\end{align}

But since $ H = -\cQ M +\cG_F $ we can apply Lemma \ref{lem:reg_H} to deduce that
\begin{equation*}
\norm{\nabla \pa  H}_{L^2}^2 \lesssim \norm{\nabla \pa  M}_{L^2}^2 + \norm{\nabla \pa  \cG_F}_{L^2}^2, 
\end{equation*}
whence
\begin{multline}\label{eq:bu3}
\mu_0 \av{\ps{\pa \pare{M\cdot \nabla H}}{u}_{L^2}}  \leqslant
\epsilon \pare{2\norm{\nabla \pa  M}_{L^2}^2  + \norm{\nabla \pa  u}_{L^2}^2 } \\
+ \frac{C}{\epsilon}\bra{ \ \pare{\sum_{\ell=0}^{k-1}F_\ell^2 G_\ell^2 
+  G_0^2 } \norm{ \pa  u}_{L^2}^2 +\ G_0^2 \norm{ \pa  M}_{L^2}^2} +
\bra{
\frac{C}{\epsilon} \sum_{\ell=1}^{k-1} G_\ell^2 + \epsilon \norm{\nabla \pa  \cG_F}_{L^2}^2
}. 
\end{multline}

Moreover a simple Cauchy-Schwartz inequality imply
\begin{equation*}
\av{\ps{\pare{\begin{array}{c}
\partial_2 \pa \omega\\ -\partial_1\pa \omega
\end{array}}}{\pa u}_{L^2}}\leqslant \epsilon \norm{\nabla\pa  \omega}_{L^2}^2 + \frac{C}{\epsilon} \norm{\pa u}_{L^2}^2, 
\end{equation*}
but indeed, considering the inductive hypothesis, Hypothesis \ref{hyp:induction}, 
\begin{equation*}
\norm{\pa u}_{L^2}^2 \leqslant\norm{\nabla u }_{H^{k-1}}^2 \leqslant G_{k-1}^2 ,
\end{equation*}
whence
\begin{equation}\label{eq:bu4}
\av{\ps{\pare{\begin{array}{c}
\partial_2 \pa \omega\\ -\partial_1\pa \omega
\end{array}}}{\pa u}_{L^2}}
\leqslant
\epsilon \norm{\nabla\pa  \omega}_{L^2}^2 + \frac{C}{\epsilon} \ G_{k-1}^2 . 
\end{equation}
Whence inserting the bounds \eqref{eq:bu2}, \eqref{eq:bu3}, \eqref{eq:bu4} in \eqref{eq:bu1} we deduce the inequality

\begin{multline} \label{eq:HE_bound_u}
\frac{\rho_0}{2}\ \frac{\d}{\d t} \norm{\partial^\alpha u }_{L^2}^2 + \pare{\eta + \zeta} \norm{\nabla \partial^\alpha u }_{L^2}^2 
\leqslant 
\epsilon \pare{ \norm{\nabla\pa  \omega}_{L^2}^2 + 2\norm{\nabla \pa  M}_{L^2}^2  + \norm{\nabla \pa  u}_{L^2}^2 } \\
+ \frac{C}{\epsilon}\bra{ \ \pare{\sum_{\ell=0}^{k-1}F_\ell^2 G_\ell^2 
+  G_0^2 } \norm{ \pa  u}_{L^2}^2 +\ G_0^2 \norm{ \pa  M}_{L^2}^2} +
\bra{
\frac{C}{\epsilon} \sum_{\ell=1}^{k-1} G_\ell^2 + \epsilon \norm{\nabla \pa  \cG_F}_{L^2}^2 
}.
\end{multline}

Let us now perform the same kind of procedure on the equation describing the evolution for $ \omega  $ in \eqref{eq:Rosensweig2D}, we deduce
\begin{multline}\label{eq:bo1}
\frac{\rho_0\kappa}{2}\ \frac{\d}{\d t} \norm{\pa \omega}_{L^2}^2 + \eta' \norm{\nabla \pa \omega}_{L^2}^2 \leqslant \rho_0\kappa\av{\ps{ \pa \pare{u\cdot\nabla \omega}}{\pa \omega}_{L^2}} + \mu_0 \av{\ps{\pa \pare{M\times H}}{\pa \omega}_{L^2}}\\
+2\zeta\av{\ps{\pa\curl\ u}{\pa \omega}_{L^2}} - 4\zeta \norm{\pa \omega}_{L^2}^2. 
\end{multline}

Using the estimates \eqref{eq:bound_Leibniz_rule} and \eqref{eq:bilinear_bound_high_energy} respectively we can estimate the following terms
\begin{align}
&\begin{multlined}
\rho_0\kappa\av{\ps{ \pa \pare{u\cdot\nabla \omega}}{\pa \omega}_{L^2}} \leqslant
\frac{\epsilon}{8} \ \pare{\norm{\nabla \pa  u}_{L^2}^2 + 2 \norm{\nabla \pa  \omega}_{L^2}^2 }\\[5mm]
+ \frac{C}{\epsilon} \ \pare{\sum_{\ell=0}^{k-1}F_\ell^2 G_\ell^2 
+ G_0^2 } \norm{ \pa  \omega}_{L^2}^2 +\frac{C}{\epsilon}\ G_0^2 \norm{ \pa  u}_{L^2}^2
+ \frac{C}{\epsilon} \sum_{\ell=1}^{k-1} G_\ell^2, 
\end{multlined} \label{eq:bo2} \\[5mm]
& \begin{multlined}
\mu_0 \av{\ps{\pa \pare{M\times H}}{\pa \omega}_{L^2}} 
\leqslant 
\epsilon \pare{ \norm{\nabla \pa M}_{L^2}^2 + \norm{\nabla \pa H}_{L^2}^2 + \norm{\nabla \pa \omega}_{L^2}^2 }
\\[5mm]
+ C \pare{ 
\sum_{\ell = 1}^{k-1} G_{\ell-1}^{\sfrac{1}{2}} G_{\ell}^{\sfrac{1}{2}} G_{k-\ell-1}^{\sfrac{1}{2}} G_{k-\ell}^{\sfrac{1}{2}}
} \norm{\pa \omega}_{L^2} 
+
\frac{C}{\varepsilon} \ G_{k-1}^2 \pare{\norm{M}_{L^2}^2 + \norm{H}_{L^2}^2 }. 
\end{multlined}\nonumber
\end{align}

Using Lemma \ref{lem:reg_H} we can state that

\begin{align*}
\norm{\nabla \pa  H}_{L^2}^2 & \lesssim \norm{\nabla \pa  M}_{L^2}^2 + \norm{\nabla \pa  \cG_F}_{L^2}^2,\\
G_{k-1}^2 \norm{H}_{L^2}^2 & \lesssim G_{k-1}^2 \norm{M}_{L^2}^2 + G_{k-1}^2 \norm{\cG_F}_{L^2}^2, 
\end{align*}
whence
\begin{multline} \label{eq:bo3}
\mu_0 \av{\ps{\pa \pare{M\times H}}{\pa \omega}_{L^2}} 
\leqslant 
\frac{\epsilon}{8} \  \pare{ 2 \norm{\nabla \pa M}_{L^2}^2  + 2 \norm{\nabla \pa \omega}_{L^2}^2 }
\\
\hspace{2cm}
+ C \pare{ 
\sum_{\ell = 1}^{k-1} G_{\ell-1}^{\sfrac{1}{2}} G_{\ell}^{\sfrac{1}{2}} G_{k-\ell-1}^{\sfrac{1}{2}} G_{k-\ell}^{\sfrac{1}{2}}
} \norm{\pa \omega}_{L^2} 
+
\frac{C}{\varepsilon} \ G_{k-1}^2 \pare{2\norm{M}_{L^2}^2 + \norm{\cG_F}_{L^2}^2 }  + \epsilon \norm{\nabla \pa  \cG_F}_{L^2}^2. 
\end{multline}
While
\begin{equation}\label{eq:bo4}
2\zeta\av{\ps{\pa\curl\ u}{\pa \omega}_{L^2}} \leqslant \epsilon \norm{\nabla\pa  u }_{L^2}^2 + \frac{C}{\epsilon} \ G_{k-1}^2, 
\end{equation}
as it was argued in order to prove \eqref{eq:bu4}. Whence the bounds \eqref{eq:bo2}, \eqref{eq:bo3} and \eqref{eq:bo4} transform \eqref{eq:bo1} in

\begin{multline} \label{eq:HE_bound_omega}
\frac{\rho_0\kappa}{2}\ \frac{\d}{\d t} \norm{\pa \omega}_{L^2}^2 + \eta' \norm{\nabla \pa \omega}_{L^2}^2 
+ 4\zeta \norm{\pa \omega}_{L^2}^2 \leqslant  
{\tilde{\epsilon}} \ \pare{\norm{\nabla \pa  u}_{L^2}^2 + \norm{\nabla \pa  M}_{L^2}^2 + \norm{\nabla \pa  \omega}_{L^2}^2 } \\
+ \frac{C}{\tilde{\epsilon}} \ \pare{\sum_{\ell=0}^{k-1}F_\ell^2 G_\ell^2 
+ G_0^2 } \norm{ \pa  \omega}_{L^2}^2 +\frac{C}{\tilde{\epsilon}}\ G_0^2 \norm{ \pa  u}_{L^2}^2
+ C \pare{ 
\sum_{\ell = 1}^{k-1} G_{\ell-1}^{\sfrac{1}{2}} G_{\ell}^{\sfrac{1}{2}} G_{k-\ell-1}^{\sfrac{1}{2}} G_{k-\ell}^{\sfrac{1}{2}}
} \norm{\pa \omega}_{L^2} 
\\
+ \frac{C}{\tilde{\epsilon}} \pare{ \sum_{\ell=1}^{k-1} G_\ell^2 + G_{k-1}^2 \pare{2\norm{M}_{L^2}^2 + \norm{\cG_F}_{L^2}^2 }  + \tilde{\epsilon} \norm{\nabla \pa  \cG_F}_{L^2}^2},
\end{multline}
where $ \tilde{\epsilon}<\sfrac{\epsilon}{16} $.\\

\noindent
Applying the operator $ \pa $ to the equation of $ M $ in \eqref{eq:Rosensweig2D}, multiplying for $ \pa M $ and integrating in the variable $ x\in\bR^2 $ give us instead
\begin{multline}
\label{eq:bM1}
\frac{1}{2}\ \frac{\d}{\d t} \norm{\pa M}_{L^2}^2 + \sigma \norm{\nabla M}_{L^2}^2 + \frac{1}{\tau} \norm{\pa M}_{L^2}^2 \leqslant \av{ \ps{\pa \pare{u\cdot\nabla M}}{\pa M}_{L^2}} \\
+ \av{\ps{\pa \pare{ \pare{
\begin{array}{c}
-M_2\\ M_1
\end{array}
}\omega}}{\pa M}_{L^2}}
+ \frac{\chi_0}{\tau} {\ps{\pa H}{\pa M}_{L^2}}. 
\end{multline}

Using the estimate \eqref{eq:bound_Leibniz_rule} and \eqref{eq:bilinear_bound_high_energy} we deduce the bounds
\begin{align}
& \begin{multlined}
\rho_0\kappa\av{\ps{ \pa \pare{u\cdot\nabla M}}{\pa M }_{L^2}} \leqslant
\frac{\epsilon}{8} \ \pare{\norm{\nabla \pa  u}_{L^2}^2 + 2 \norm{\nabla \pa  M }_{L^2}^2 }\\[5mm]
\hspace{2cm}
+ \frac{C}{\epsilon} \ \pare{\sum_{\ell=0}^{k-1}F_\ell^2 G_\ell^2 
+ G_0^2 } \norm{ \pa  M }_{L^2}^2 +\frac{C}{\epsilon}\ G_0^2 \norm{ \pa  u}_{L^2}^2
+ \frac{C}{\epsilon} \sum_{\ell=1}^{k-1} G_\ell^2, 
\end{multlined} 
\label{eq:bM2}
\\[5mm]
& \begin{multlined}
\av{\ps{\pa \pare{ \pare{
\begin{array}{c}
-M_2\\ M_1
\end{array}
}\omega}}{\pa M}_{L^2}}
\leqslant 
\frac{\epsilon}{8} \ \pare{ 2 \norm{\nabla \pa M}_{L^2}^2  + \norm{\nabla \pa \omega}_{L^2}^2 }
\\[5mm]
+ C \pare{ 
\sum_{\ell = 1}^{k-1} G_{\ell-1}^{\sfrac{1}{2}} G_{\ell}^{\sfrac{1}{2}} G_{k-\ell-1}^{\sfrac{1}{2}} G_{k-\ell}^{\sfrac{1}{2}}
} \norm{\pa M }_{L^2} 
+
\frac{C}{\varepsilon} \ G_{k-1}^2 \pare{\norm{M}_{L^2}^2 + \norm{\omega}_{L^2}^2 }. 
\end{multlined}
\label{eq:bM3}
\end{align}

Using the identity \eqref{eq:H_in_fz_di_M} and the fact that the operator $ \cQ $ commutes with the operator $ \pa $
\begin{equation}
\label{eq:bM4}
\begin{aligned}
\frac{\chi_0}{\tau} {\ps{\pa H}{\pa M}_{L^2}} & = - \frac{\chi_0}{\tau} {\ps{\pa \cQ M}{\pa M}_{L^2}} + \frac{\chi_0}{\tau} {\ps{\pa \cG_F}{\pa M}_{L^2}}, \\
&=  - \frac{ \chi_0}{2 \tau} \norm{\pa \cQ M}_{L^2}^2 +  \frac{C}{\tau} \norm{\pa \cG_F}_{L^2}^2. 
\end{aligned}
\end{equation}

Whence inserting \eqref{eq:bM2}, \eqref{eq:bM3} and \eqref{eq:bM4} in \eqref{eq:bM1} we deduce
\begin{multline}\label{eq:HE_bound_M}
\frac{1}{2}\ \frac{\d}{\d t} \norm{\pa M}_{L^2}^2 + \sigma \norm{\nabla M}_{L^2}^2 + \frac{1}{\tau} \pare{ \norm{\pa M}_{L^2}^2 + \frac{ \chi_0}{2 } \norm{\pa \cQ M}_{L^2}^2}\\
 \leqslant 
 {\tilde{\epsilon}} \ \pare{\norm{\nabla \pa  u}_{L^2}^2 + \norm{\nabla \pa  M }_{L^2}^2 + \norm{\nabla \pa  \omega}_{L^2}^2} 
 +
 \frac{C}{\tilde{\epsilon}} \ \pare{\sum_{\ell=0}^{k-1}F_\ell^2 G_\ell^2 
+ G_0^2 } \norm{ \pa  M }_{L^2}^2 +\frac{C}{\tilde{\epsilon}}\ G_0^2 \norm{ \pa  u}_{L^2}^2
\\
\pare{ 
\sum_{\ell = 1}^{k-1} G_{\ell-1}^{\sfrac{1}{2}} G_{\ell}^{\sfrac{1}{2}} G_{k-\ell-1}^{\sfrac{1}{2}} G_{k-\ell}^{\sfrac{1}{2}}
} \norm{\pa M }_{L^2} 
+ C \set{\frac{1}{\tilde{\epsilon}} \bra{ \sum_{\ell=1}^{k-1} G_\ell^2 +G_{k-1}^2 \pare{\norm{M}_{L^2}^2 + \norm{\omega}_{L^2}^2 }}+\frac{1}{\tau} \norm{\pa \cG_F}_{L^2}^2}, 
\end{multline}
where $ \tilde{\epsilon}<\sfrac{\epsilon}{16} $.\\

\noindent
Whence adding \eqref{eq:HE_bound_u}, \eqref{eq:HE_bound_omega} and \eqref{eq:HE_bound_M}, and denoting
\begin{equation}
\label{eq:def_fis}
\begin{aligned}
& f_2  = \pare{\sum_{\ell=0}^{k-1}F_\ell^2 G_\ell^2 
+ G_0^2 } , \\
& f_1 = \sum_{\ell = 1}^{k-1} G_{\ell-1}^{\sfrac{1}{2}} G_{\ell}^{\sfrac{1}{2}} G_{k-\ell-1}^{\sfrac{1}{2}} G_{k-\ell}^{\sfrac{1}{2}}, \\
& f_0 = { \sum_{\ell=1}^{k-1} G_\ell^2 +G_{k-1}^2 \pare{ \norm{U}_{L^2}^2 + \norm{\cG_F }_{L^2}^2  }}+\frac{1}{\tau} \norm{ \cG_F}_{H^k }^2 + \norm{\nabla  \cG_F}_{H^k}^2, 
\end{aligned}
\end{equation}
where as usual $ U=\pare{u, \omega, M, H} $
we recover the inequality
\begin{multline*}
\frac{1}{2}\ \frac{\d}{\d t} \pare{\rho_0 \norm{\pa u}_{L^2}^2 + \rho_0 \kappa \norm{\pa \omega}^2 + \norm{\pa M}_{L^2}^2}\\
 + \pare{ \min\set{\eta+\zeta, \eta', \sigma \Big. }-\epsilon} \pare{\norm{\nabla \pa  u}_{L^2}^2 
 + \norm{\nabla \pa  \omega}_{L^2}^2 + \norm{\nabla \pa  M }_{L^2}^2}\\
  \leqslant \frac{C}{\epsilon} \ f_2 \  \pare{\rho_0 \norm{\pa u}_{L^2}^2 + \rho_0 \kappa \norm{\pa \omega}^2 + \norm{\pa M}_{L^2}^2} + C\  f_1 \pare{\Big. \rho_0 \norm{\pa u}_{L^2} + \rho_0 \kappa \norm{\pa \omega} + \norm{\pa M}_{L^2}}
  +\frac{C}{\epsilon} \ f_0. 
\end{multline*}

Setting hence $ \epsilon $ sufficiently small so that
\begin{equation*}
\min\set{\eta+\zeta, \eta', \sigma \Big. }-\epsilon \geqslant c > 0, 
\end{equation*}
and applying a Gronwall inequality we deduce hence that
\begin{multline}\label{eq:HE_Gronwall_applied_1}
\rho_0 \norm{\pa u \pare{t}}_{L^2}^2 + \rho_0 \kappa \norm{\pa \omega \pare{t}}^2 + \norm{\pa M \pare{t}}_{L^2}^2
\\
+ c \int_0^t \bra{  \norm{\nabla \pa  u\pare{t'}}_{L^2}^2 
 + \norm{\nabla \pa  \omega\pare{t'}}_{L^2}^2 + \norm{\nabla \pa  M\pare{t'} }_{L^2}^2 }\exp\set{C \int _{t'}^t f_2\pare{t''} + f_1\pare{t''}\d t''}\d t'
 \\
 \leqslant
 C \pare{\rho_0 \norm{\pa u _0}_{L^2}^2 + \rho_0 \kappa \norm{\pa \omega_0}^2 + \norm{\pa M_0}_{L^2}^2} \exp\set{C \int_0^t f_2\pare{t'} + f_1\pare{t'}\d t'}\\
 + C\int_0^t \bra{\Big. f_1\pare{t'} + f_0\pare{t'}} \exp\set{C \int _{t'}^t f_2\pare{t''} + f_1\pare{t''}\d t''} \d t'. 
\end{multline}

\noindent The right-hand side of \eqref{eq:HE_Gronwall_applied_1} is bounded in compact sets of $ \bR_+ $ considering the definition of the $ f_i $'s functions given in \eqref{eq:def_fis} and the regularity of the functions $ F_i, G_i $ set in the inductive hypothesis, Hypothesis \ref{hyp:induction}. Setting hence 
\begin{equation*}
\delta <\frac{c}{\min\set{\Big. \rho_0, \rho_0 \kappa, 1}}, 
\end{equation*}
and since for each $ 0\leqslant t'\leqslant t \leqslant T $
\begin{equation*}
\exp\set{C \int _{t'}^t f_2\pare{t''} + f_1\pare{t''}\d t''} \geqslant 1, 
\end{equation*}
we can transform \eqref{eq:HE_Gronwall_applied_1} into
\begin{multline}\label{eq:HE_Gronwall_applied_2}
\pare{\norm{\pa u \pare{t}}_{L^2}^2 +  \norm{\pa \omega \pare{t}}^2 + \norm{\pa M \pare{t}}_{L^2}^2}\\
+ \delta \int_0^t \bra{  \norm{\nabla \pa  u\pare{t'}}_{L^2}^2 
 + \norm{\nabla \pa  \omega\pare{t'}}_{L^2}^2 + \norm{\nabla \pa  M\pare{t'} }_{L^2}^2 }\d t'
 \\
 \leqslant
 C \pare{\rho_0 \norm{\pa u _0}_{L^2}^2 + \rho_0 \kappa \norm{\pa \omega_0}^2 + \norm{\pa M_0}_{L^2}^2} \exp\set{C \int_0^t f_2\pare{t'} + f_1\pare{t'}\d t'}\\
 + C\int_0^t \bra{\Big. f_1\pare{t'} + f_0\pare{t'}} \exp\set{C \int _{t'}^t f_2\pare{t''} + f_1\pare{t''}\d t''} \d t'. 
\end{multline}

Since the functions $ f_i $'s defined in \eqref{eq:def_fis} are independent of the choice of the multi-index $ \alpha $ we can sum each inequality derived in \eqref{eq:HE_Gronwall_applied_2} on the set of multi-indexes $ \alpha $ of length $ k $ deriving the inequality (here we denote $ V =\pare{u, \omega, M} $)
\begin{multline*}
\norm{V \pare{t}}_{H^k}^2 + \delta \int_0^t \norm{\nabla V \pare{t'}}^2_{H^k}\d t' \leqslant C \norm{U_0}_{H^k}^2\exp\set{C \int_0^t f_2\pare{t'} + f_1\pare{t'}\d t'}\\
 + C\int_0^t \bra{\Big. f_1\pare{t'} + f_0\pare{t'}} \exp\set{C \int _{t'}^t f_2\pare{t''} + f_1\pare{t''}\d t''} \d t'. 
\end{multline*}

We can hence set
\begin{align*}
& \begin{multlined}
\widetilde{F}_k^2\pare{t} = C \norm{U_0}_{H^k}^2\exp\set{C \int_0^t f_2\pare{t'} + f_1\pare{t'}\d t'}\\
\hspace{2cm} + C\int_0^t \bra{\Big. f_1\pare{t'} + f_0\pare{t'}} \exp\set{C \int _{t'}^t f_2\pare{t''} + f_1\pare{t''}\d t''} \d t', 
\end{multlined}\\
& \widetilde{G}_k^2\pare{t} = \frac{1}{\delta}\ \frac{\d}{\d t} \ \bra{ \widetilde{F}_k^2 \pare{t}}, 
\end{align*}
while using the identity \eqref{eq:H_in_fz_di_M} we argue that
\begin{equation}\label{eq:def_FkGk}
\begin{aligned}
\norm{H}_{H^k\pare{t}} & \leqslant C \pare{\norm{M\pare{t}}_{H^k} + \norm{\cG_F\pare{t}}_{H^k}}, \\
& \leqslant C \pare{\widetilde{F}_k\pare{t} + \norm{\cG_F }_{L^\infty \pare{ \bra{0, T};  H^k}}} && \overset{\text{def}}{ =} \frac{1}{2}\ F_k\pare{t}, \\
\norm{ \nabla H}_{H^k\pare{t}} & \leqslant C \pare{\norm{\nabla M\pare{t}}_{H^k} + \norm{ \nabla \cG_F\pare{t}}_{H^k}}, \\
& \leqslant C \pare{\widetilde{G}_k\pare{t} + \norm{\nabla\cG_F }_{L^\infty \pare{ \bra{0, T};  H^k}}} && \overset{\text{def}}{ =} \frac{1}{2}\ G_k\pare{t}.
\end{aligned}
\end{equation}

\noindent By definition $ F_k\geqslant \widetilde{F}_k $ and $ G_k\geqslant \widetilde{G}_k $, hence $ U=\pare{u, \omega, M, H} $ satisfies the Hypothesis \ref{hyp:induction}  for $ k $ when $ F_k $ and $ G_k $ are defined as in \eqref{eq:def_FkGk}, proving hence the induction and concluding the proof. 
\hfill $ \Box $

\appendix

\section{Technical estimates}\label{appendix_technical_estimates}

\subsection{Proof of Lemma \ref{lem:L2_energy_estimates}:} \label{pf:L2_bounds}
Let us multiply the equation of  $ u $ in \eqref{eq:Rosensweig2D} for $ u $ and let us integrate in $ \mathbb{R}^2 $, obtaining
\begin{equation}\label{eq:L2est0}
\frac{\rho_0}{2} \frac{\d}{\d t} \norm{u}^2_{L^2} + \pare{\eta + \zeta} \norm{\nabla u}^2_{L^2} = \mu_0\ps{M \cdot\nabla H}{u}_{L^2} + 2\zeta \ps{\pare{\begin{array}{c}
\partial_2 \omega\\ -\partial_1\omega
\end{array}}}{u}_{L^2}, 
\end{equation}
Since $ \curl \ H =0 $ we can assert that there exists a scalar function $ \phi_H $ such that $ H=\nabla \phi_H $. Hence we can deduce the identity $ \mu_0\ps{M \cdot\nabla H}{u}_{L^2}= -\mu_0 \ps{u\cdot\nabla M}{H}_{L^2} $. Multiplying the equation describing the evolution of $ M $ in \eqref{eq:Rosensweig2D} for $ H $ and integrating in space  we deduce that
\begin{equation*}
\ps{\partial_t M}{H}_{L^2} + \ps{u\cdot\nabla M}{H}_{L^2} - \sigma\ps{\Delta M}{H}_{L^2} = \ps{\pare{
\begin{array}{c}
-M_2\\ M_1
\end{array}
}\omega}{H}_{L^2} - \frac{1}{\tau} \ps{M-\chi_0 H}{H}_{L^2}, 
\end{equation*}
which combined with \eqref{eq:L2est0} implies the following equality
\begin{multline}\label{eq:L2est1}
\frac{\rho_0}{2} \frac{\d}{\d t} \norm{u}^2_{L^2} + \pare{\eta + \zeta} \norm{\nabla u}^2_{L^2}
 =   \mu_0 \ps{\partial_t M}{H}_{L^2}  - \mu_0 \sigma\ps{\Delta M}{H}_{L^2} \\
 - \mu_0\ps{\pare{
\begin{array}{c}
-M_2\\ M_1
\end{array}
}\omega}{H}_{L^2} + \frac{\mu_0}{\tau} \ps{M-\chi_0 H}{H}_{L^2}
+ 2\zeta \ps{\pare{\begin{array}{c}
\partial_2 \omega\\ -\partial_1\omega
\end{array}}}{u}_{L^2}
.
\end{multline}
Multiplying the fourth equation of \eqref{eq:Rosensweig2D} for $ \phi_H $, integrating by parts, considering that $ H=\nabla \phi_H $ and integrating we obtain the equation
\begin{equation}\label{eq:psMH}
\begin{aligned}
-\ps{M}{H}_{L^2}& = \norm{H}_{L^2}^2 + \int \div\nabla \Delta^{-1}F \cdot \phi_H \ \dx, \\
 & = \norm{H}_{L^2}^2 - \ps{\cG_F}{H}_{L^2},\end{aligned} 
\end{equation}
from which we deduce 
\begin{align*}
\frac{\mu_0}{\tau} \ps{M-\chi_0 H}{H}_{L^2} & = -  \frac{\mu_0}{\tau} \pare{1+\chi_0}\norm{H}_{L^2}^2 + \frac{\mu_0}{\tau} \ps{\cG_F}{H}_{L^2}, \\
& \leqslant -  \frac{\mu_0}{\tau} \pare{\frac{3}{4}+\chi_0}\norm{H}_{L^2}^2 + \frac{C}{\tau}\norm{\cG_F}_{L^2}^2. 
\end{align*}

While differentiating in time the magnetostating equation, multiplying for $ \phi_H $ and integrating in space we deduce
\begin{align*}
\mu_0 \ps{\partial_t M}{H}_{L^2} & = -\frac{\mu_0}{2} \  \frac{\d}{\d t} \norm{H}^2_{L^2} + \mu_0 \ps{\partial_t \cG_F}{H}_{L^2}, \\
& \leqslant -\frac{\mu_0}{2} \  \frac{\d}{\d t} \norm{H}^2_{L^2} +  \frac{\mu_0}{4\tau} \norm{H}_{L^2}^2 + C\mu_0 \tau \norm{\partial_t \cG_F}^2. 
\end{align*}

Taking in consideration the magnetostatic equation $ \div\pare{M+H}=F $, recalling that $ \Delta M = \div\nabla M $,  integrating by parts,  using the identity \eqref{eq:H_in_fz_di_M} and a Young inequality we derive 
\begin{align*}
- \mu_0 \sigma\ps{\Delta M}{H}_{L^2} & = -\mu_0 \sigma\norm{\div\ M}_{L^2}^2 + \mu_0 \sigma\int \div \ M \ F \dx, \\
& \leqslant -\frac{\mu_0 \sigma}{2} \norm{\div\ M}_{L^2}^2 + C \norm{F}_{L^2}^2
. 
\end{align*} 

Next we consider the identity
\begin{equation*}
2\zeta \ps{\pare{\begin{array}{c}
\partial_2 \omega\\ -\partial_1\omega
\end{array}}}{u}_{L^2} = 2\zeta \ps{\curl \ u}{\omega}_{L^2},
\end{equation*}
which holds true with a simple integration by parts,  these considerations transform \eqref{eq:L2est1} in
\begin{multline}\label{eq:L2est2}
\frac{1}{2}\ \frac{\d}{\d t}\pare{{\rho_0}\norm{u}^2_{L^2} + {\mu_0} \   \norm{H}^2_{L^2}} + \pare{\eta + \zeta} \norm{\nabla u}^2_{L^2} + \frac{\mu_0 \sigma}{2} \norm{\div\ M}_{L^2}^2 +\frac{\mu_0}{\tau} \pare{\frac{1}{2}+\chi_0}\norm{H}_{L^2}^2
\\
 \leqslant  C\mu_0 \tau \norm{\partial_t \cG_F} ^2_{L^2}
 - \mu_0\ps{\pare{
\begin{array}{c}
-M_2\\ M_1
\end{array}
}\omega}{H}_{L^2}
+ 2\zeta \ps{\curl \ u}{\omega}_{L^2} +\frac{C}{\tau}\norm{\cG_F}_{L^2}^2 + C \norm{F}_{L^2}^2
.
\end{multline}

Let us now perform an $ L^2 $ energy estimate on the equation for $ \omega $ in \eqref{eq:Rosensweig2D}:

\begin{equation}\label{eq:L2est3}
\frac{\rho_0 \kappa}{2} \ \frac{\d}{\d t} \norm{\omega}_{L^2}^2 + \eta'\norm{\nabla\omega}^2_{L^2} = \mu_0 \ps{M\times H}{\omega}_{L^2}  + 2\zeta \ps{\curl \ u -2\omega}{\omega}_{L^2}. 
\end{equation}
The following algebraic identity is immediate
\begin{align*}
\mu_0 \ps{M\times H}{\omega}_{L^2} & = \mu_0\ps{\pare{
\begin{array}{c}
-M_2\\ M_1
\end{array}
}\omega}{H}_{L^2},
\end{align*}
whence we can add \eqref{eq:L2est2} and \eqref{eq:L2est3} to deduce the following inequality
\begin{multline}\label{eq:L2est4}
\frac{1}{2}\ \frac{\d}{\d t}\pare{{\rho_0}\norm{u}^2_{L^2} + {\mu_0} \   \norm{H}^2_{L^2} + \rho_0 \kappa  \norm{\omega}_{L^2}^2 } \\
 + \pare{\eta + \zeta} \norm{\nabla u}^2_{L^2} + \eta'\norm{\nabla\omega}^2_{L^2} + \frac{\mu_0 \sigma}{2} \norm{\div\ M}_{L^2}^2 +\frac{\mu_0}{\tau} \pare{\frac{1}{2}+\chi_0}\norm{H}_{L^2}^2
\\
\leqslant C\mu_0 \tau \norm{\partial_t \cG_F} ^2_{L^2}
 +2\zeta \ps{\curl \ u}{\omega}_{L^2} - 4\zeta\norm{\omega}_{L^2}^2
 +\frac{C}{\tau}\norm{\cG_F}_{L^2}^2 + C \norm{F}_{L^2}^2. 
\end{multline}
Since $ \norm{\nabla u}_{L^2}^2 = \sum_{i,j=1}^2\int \av{\partial_i u_j}^2 \dx $ an application of Cauchy-Schwartz inequality shows that
\begin{equation*}
\zeta \norm{\nabla u}_{L^2}^2  + 4 \zeta\norm{\omega}_{L^2}^2 -\zeta \ps{\curl \ u}{2 \omega}_{L^2} \geqslant 0,
\end{equation*}
whence we can improve the bound in \eqref{eq:L2est4} with
\begin{multline}
\label{eq:L2est5}
\frac{1}{2}\ \frac{\d}{\d t}\pare{{\rho_0}\norm{u}^2_{L^2} + {\mu_0} \   \norm{H}^2_{L^2} + \rho_0 \kappa  \norm{\omega}_{L^2}^2 } \\
 + \eta \norm{\nabla u}^2_{L^2} + \eta'\norm{\nabla\omega}^2_{L^2} + \frac{\mu_0 \sigma}{2} \norm{\div\ M}_{L^2}^2 +\frac{\mu_0}{\tau} \pare{\frac{1}{2}+\chi_0}\norm{H}_{L^2}^2
\\
\leqslant  C\mu_0 \tau \norm{\partial_t \cG_F}_{L^2}^2 
 +\frac{C}{\tau}\norm{\cG_F}_{L^2}^2 + C \norm{F}_{L^2}^2.
\end{multline}

At last we perform an $ L^2 $ energy estimate on the equation for $ M $ and we deduce
\begin{equation*}
\frac{1}{2} \ \frac{\d}{\d t} \norm{M}_{L^2}^2 + \sigma \norm{\nabla M}_{L^2}^2 = -\frac{1}{\tau}\norm{M}_{L^2}^2 + \frac{\chi_0}{\tau} \ps{H}{M}_{L^2}, 
\end{equation*}
whence  with the identity \eqref{eq:psMH}
\begin{equation}
\label{eq:L2est6}
\begin{aligned}
\frac{1}{2} \ \frac{\d}{\d t} \norm{M}_{L^2}^2 + \sigma \norm{\nabla M}_{L^2}^2 +\frac{1}{\tau}\norm{M}_{L^2}^2 & = - 
\frac{\chi_0}{\tau} \pare{  \norm{H}_{L^2}^2 + \ps{\cG_F}{H}_{L^2}}, \\
& \leqslant -\frac{\chi_0}{2\tau}\norm{H}_{L^2}^2 + C\norm{\cG_F}_{L^2}^2,
\end{aligned}
\end{equation}
whence adding \eqref{eq:L2est5} and \eqref{eq:L2est6} we deduce the final inequality
\begin{multline}
\label{eq:L2est7}
\frac{1}{2}\ \frac{\d}{\d t}\pare{{\rho_0}\norm{u}^2_{L^2} + {\mu_0} \   \norm{H}^2_{L^2} + \rho_0 \kappa  \norm{\omega}_{L^2}^2 + \norm{M}_{L^2}^2} \\
 + \eta \norm{\nabla u}^2_{L^2} + \eta'\norm{\nabla\omega}^2_{L^2} + \sigma \norm{\nabla M}_{L^2}^2 + \frac{\mu_0 \sigma}{2} \norm{\div\ M}_{L^2}^2 +\frac{1}{ \tau} \pare{ \frac{\mu_0}{2} + \chi_0\pare{\mu_0+\frac{1}{2}}}\norm{H}_{L^2}^2 +\frac{1}{\tau}\norm{M}_{L^2}^2
\\
\leqslant  C\mu_0 \tau \norm{\partial_t \cG_F} _{L^2}^2
 +\frac{1}{\tau}\norm{\cG_F}_{L^2}^2 +  \norm{F}_{L^2}^2.
\end{multline}

We can hence reformulate  \eqref{eq:L2est7} with the quantities defined in \eqref{eq:ctilde}--\eqref{eq:cFtau}, with such and an integration in time we deduce the integral inequality
\begin{equation*}
\frac{1}{2}\cE\pare{t} + \tilde{c}\int_0^t \cE_d  \pare{t'}\d t' \leqslant \frac{1}{2} \cE\pare{0} + C \int_0^t f_\tau \pare{t'}\d t',
\end{equation*}
which concludes the proof. 
\hfill $ \Box $

\subsection{Proof of Lemma \ref{lem:H12_energy_estimates}: }\label{appendix:H12_energy_estimates}
The proof of Lemma \ref{lem:H12_energy_estimates} consists in performing some $ \Hud $ energy estimates on the system \eqref{eq:Rosensweig2D} and to check that, as long as the estimates of Lemma \ref{lem:L2_energy_estimates} hold as well, we can obtain a global control for the $ \Hud $ regularity of the solutions of \eqref{eq:Rosensweig2D}. Let us hence multiplying the equation describing the evolution of $ u $ for $ \Lambda u $ and integrating in space we deduce the following energy equality
\begin{multline*}
\frac{\rho_0}{2}\ \frac{\d}{\d t} \norm{\Lambda^{\sfrac{1}{2}} u}_{L^2}^2 + \pare{\eta + \zeta}\norm{\Lambda^{\sfrac{1}{2}}\nabla u}_{L^2}
 \leqslant \av{\ps{\Lambda^{\sfrac{1}{2}}\pare{u\cdot\nabla u}}{\Lambda^{\sfrac{1}{2}}u}_{L^2}}\\
  + \mu_0 \av{\ps{\Lambda^{\sfrac{1}{2}}\pare{M\cdot \nabla H}}{\Lambda^{\sfrac{1}{2}} u}_{L^2}}
 + 2\zeta\av{\ps{\Lambda^{\sfrac{1}{2}}\pare{\begin{array}{c}
\partial_2 \omega\\ -\partial_1\omega
\end{array}}}{\Lambda^{\sfrac{1}{2}}u}_{L^2}}, 
\end{multline*}
hence using repeatedly integration by parts and Lemma \ref{lem:prod_rules_Sobolev_2D} 
\begin{align*}
\av{\ps{\Lambda^{\sfrac{1}{2}}\pare{u\cdot\nabla u}}{\Lambda^{\sfrac{1}{2}}u}_{L^2}} &  \leqslant \norm{u\cdot \nabla u}_{L^2} \norm{\nabla u}_{L^2}, \\
& \leqslant \norm{\nabla u}_{L^2}\norm{ \Lambda^{\sfrac{1}{2}} u}_{L^2}\norm{\Lambda^{\sfrac{1}{2}}\nabla u}_{L^2}, \\
& \leqslant \frac{\eta+\zeta}{2} \norm{\Lambda^{\sfrac{1}{2}}\nabla u}_{L^2}^2 + C \norm{\nabla u}_{L^2}^2\norm{ \Lambda^{\sfrac{1}{2}} u}_{L^2}^2, \\
\av{\ps{\Lambda^{\sfrac{1}{2}}\pare{\begin{array}{c}
\partial_2 \omega\\ -\partial_1\omega
\end{array}}}{\Lambda^{\sfrac{1}{2}} u}_{L^2}} & \leqslant \norm{\nabla \omega}_{L^2}\norm{\nabla u}_{L^2}, 
\end{align*}
and hence considering as well the estimate given in Lemma \ref{lem:bound_Lorentz_force} we deduced the following inequality applying repeatedly the convexity inequality $ ab\leqslant \frac{a^2}{2}+\frac{b^2}{2} $:
\begin{multline}\label{eq:H12u}
\frac{\rho_0}{2}\ \frac{\d}{\d t} \norm{\Lambda^{\sfrac{1}{2}} u}_{L^2}^2 + \frac{\eta+\zeta}{2} \norm{\Lambda^{\sfrac{1}{2}}\nabla u}_{L^2} \leqslant
\frac{\sigma}{8} \norm{\Lambda^{\sfrac{1}{2}}\nabla M}_{L^2}^2 +\\
C \norm{\nabla u}_{L^2}^2\norm{\Lambda^{\sfrac{1}{2}} u}_{L^2}^2 +
\norm{\nabla \omega}_{L^2}\norm{\nabla u}_{L^2} 
+ C \norm{\nabla u}^2_{L^2} \norm{\Lambda^{\sfrac{1}{2}} M}^2_{L^2} + \norm{\Lambda^{\sfrac{1}{2}}\nabla \cG_F}_{L^2}^2. 
\end{multline}

Multiplying the equation describing the evolution of $ \omega $ in \eqref{eq:Rosensweig2D} for $ \Lambda \omega $ and integrating in space we deduce instead
\begin{multline*}
\frac{\rho_0\kappa}{2} \ \frac{\d}{\d t}\norm{\Lambda^{\sfrac{1}{2}}\omega}_{L^2}^2 +\eta' \norm{\Lambda^{\sfrac{1}{2}}\nabla \omega}_{L^2}^2 + 4\zeta \norm{\Lambda^{\sfrac{1}{2}} \omega}_{L^2}^2 \\
 \leqslant \av{\ps{\Lambda^{\sfrac{1}{2}}\pare{M\times H}}{\Lambda^{\sfrac{1}{2}}\omega}_{L^2}} +2\zeta \av{\ps{\Lambda^{\sfrac{1}{2}}\curl\ u}{\Lambda^{\sfrac{1}{2}}\omega}_{L^2}} , 
\end{multline*}
and
\begin{align*}
\av{\ps{\Lambda^{\sfrac{1}{2}}\pare{M\times H}}{\Lambda^{\sfrac{1}{2}}\omega}_{L^2}} & \leqslant \norm{\Lambda^{\sfrac{1}{2}}\nabla\omega}_{L^2}\norm{\Lambda^{-\sfrac{1}{2}}\pare{M\times H}}_{L^2}, \\
& \leqslant C \norm{\Lambda^{\sfrac{1}{2}}\nabla\omega}_{L^2} \norm{\Lambda^{\sfrac{1}{2}} M}_{L^2}\norm{H}_{L^2}, \\
& \leqslant \frac{\eta'}{4}\norm{\Lambda^{\sfrac{1}{2}}\nabla\omega}_{L^2}^2 + C \norm{H}_{L^2}^2 \norm{\Lambda^{\sfrac{1}{2}} M}_{L^2}^2, \\
\av{\ps{\Lambda^{\sfrac{1}{2}}\curl\ u}{\Lambda^{\sfrac{1}{2}}\omega}_{L^2}} & \leqslant \norm{\nabla \omega}_{L^2}\norm{\nabla u}_{L^2} . 
\end{align*}
Whence we deduced the inequality
\begin{equation}\label{eq:H12omega}
\frac{\rho_0\kappa}{2} \ \frac{\d}{\d t}\norm{\Lambda^{\sfrac{1}{2}}\omega}_{L^2}^2 +\frac{\eta'}{2} \norm{\Lambda^{\sfrac{1}{2}}\nabla \omega}_{L^2}^2 + 4\zeta \norm{\Lambda^{\sfrac{1}{2}}\omega}_{L^2}^2 \leqslant C \norm{H}_{L^2}^2 \norm{\Lambda^{\sfrac{1}{2}} M}_{L^2}^2 + \norm{\nabla \omega}_{L^2}\norm{\nabla u}_{L^2}. 
\end{equation}

\noindent We perform the same procedure onto the equation for $ M $ deducing hence
\begin{multline*}
\frac{1}{2} \ \frac{\d}{\d t} \norm{\Lambda^{\sfrac{1}{2}} M}_{L^2}^2 + \sigma\norm{\Lambda^{\sfrac{1}{2}} \nabla M}_{L^2}^2 \leqslant \av{\ps{\Lambda^{\sfrac{1}{2}}\pare{u\cdot\nabla M}}{\Lambda^{\sfrac{1}{2}} M}_{L^2}}\\
 + \av{\ps{\Lambda^{\sfrac{1}{2}}\pare{ \pare{
\begin{array}{c}
-M_2\\ M_1
\end{array}
}\omega}}{\Lambda^{\sfrac{1}{2}} M}_{L^2}} -\frac{1}{\tau}\ps{\Lambda^{\sfrac{1}{2}}M-\chi_0\Lambda^{\sfrac{1}{2}} H}{\Lambda^{\sfrac{1}{2}}M}_{L^2}.
\end{multline*}

\noindent Straightforward calculations prove the following bounds
\begin{align*}
\av{\ps{\Lambda^{\sfrac{1}{2}}\pare{u\cdot\nabla M}}{\Lambda^{\sfrac{1}{2}} M}_{L^2}} & \leqslant \frac{\sigma}{8}\norm{\Lambda^{\sfrac{1}{2}}\nabla M}_{L^2}^2 + \norm{\nabla M}_{L^2}^2 \norm{\Lambda^{\sfrac{1}{2}} u}_{L^2}^2, \\
\av{\ps{\Lambda^{\sfrac{1}{2}} \pare{ \pare{
\begin{array}{c}
-M_2\\ M_1
\end{array}
}\omega}}{\Lambda^{\sfrac{1}{2}} M}_{L^2}} & \leqslant \norm{M}_{L^2} \norm{\Lambda^{\sfrac{1}{2}}\omega}_{L^2} \norm{\Lambda^{\sfrac{1}{2}}\nabla M}_{L^2}, \\
& \leqslant \frac{\sigma}{8} \norm{\Lambda^{\sfrac{1}{2}} \nabla M}_{L^2}^2 + C \norm{M}_{L^2}^2\norm{\Lambda^{\sfrac{1}{2}} \omega}_{L^2}^2.
\end{align*}

\noindent For the last term it suffice to notice that
\begin{equation*}
-\frac{1}{\tau}\ps{\Lambda^{\sfrac{1}{2}}M-\chi_0\Lambda^{\sfrac{1}{2}} H}{\Lambda^{\sfrac{1}{2}}M}_{L^2} = -\frac{1}{\tau}\pare{ \norm{\Lambda^{\sfrac{1}{2}}M}_{L^2}^2 + \chi_0\ps{\Lambda^{\sfrac{1}{2}}H}{\Lambda^{\sfrac{1}{2}}M}_{L^2}}, 
\end{equation*}
and moreover since $ H=-\cQ M +\cG_F $ as it was explained in \eqref{eq:H_in_fz_di_M} we deduce that
\begin{equation*}
\ps{\Lambda^{\sfrac{1}{2}} H}{\Lambda^{\sfrac{1}{2}} M}_{L^2}= -\norm{\Lambda^{\sfrac{1}{2}}\cQ M}_{L^2}^2 + \ps{\Lambda^{\sfrac{1}{2}}\cG_F}{\Lambda^{\sfrac{1}{2}}M}_{L^2}. 
\end{equation*}
Since $ \cG_F = \nabla\Delta^{-1}F $ we immediately deduce that $$ \ps{\Lambda^{\sfrac{1}{2}}\cG_F}{\Lambda^{\sfrac{1}{2}}M}_{L^2}= \ps{\Lambda^{\sfrac{1}{2}}\cG_F}{\Lambda^{\sfrac{1}{2}}\cQ M}_{L^2}\leqslant  \frac{1}{2}\norm{\Lambda^{\sfrac{1}{2}}\cQ M}_{L^2}^2 + C \norm{\Lambda^{\sfrac{1}{2}}\cG_F}_{L^2}^2 ,  $$ whence we conclude that
\begin{equation*}
-\frac{1}{\tau}\ps{\Lambda^{\sfrac{1}{2}}M-\chi_0\Lambda^{\sfrac{1}{2}} H}{\Lambda^{\sfrac{1}{2}}M}_{L^2}\leqslant -\frac{1}{\tau}\pare{  \norm{\Lambda^{\sfrac{1}{2}} M}_{L^2}^2 + \frac{\chi_0}{2}\norm{\Lambda^{\sfrac{1}{2}}\cQ M}_{L^2}^2} + \frac{C}{\tau}\norm{\Lambda^{\sfrac{1}{2}} \cG_F}_{L^2}^2. 
\end{equation*}

\noindent We recover hence the final $ \Hud $ energy inequality for $ M $: 
\begin{multline}\label{eq:H12M}
\frac{1}{2} \ \frac{\d}{\d t} \norm{\Lambda^{\sfrac{1}{2}} M}_{L^2}^2 + \frac{\sigma}{2}\norm{\Lambda^{\sfrac{1}{2}} \nabla M}_{L^2}^2 + \frac{1}{\tau}\pare{\norm{\Lambda^{\sfrac{1}{2}}M}_{L^2}^2 + \frac{\chi_0}{2}\norm{\Lambda^{\sfrac{1}{2}}\cQ M}_{L^2}^2}\\
 \leqslant  C  \pare{\norm{\nabla M}_{L^2}^2 \norm{\Lambda^{\sfrac{1}{2}}u}_{L^2}^2 +\norm{M}_{L^2}^2\norm{\Lambda^{\sfrac{1}{2}}\omega}_{L^2}^2} + \frac{C}{\tau}\norm{\Lambda^{\sfrac{1}{2}} \cG_F}_{L^2}^2. 
\end{multline}

Adding the inequalities \eqref{eq:H12u}, \eqref{eq:H12omega} and \eqref{eq:H12M} we recover the differential inequality
\begin{multline}\label{eq:H12est_almostfinal}
\frac{1}{2}\ \frac{\d}{\d t} \pare{ \rho_0 \norm{\Lambda^{\sfrac{1}{2}} u}_{L^2}^2 + \rho_0\kappa \norm{\Lambda^{\sfrac{1}{2}}\omega}_{L^2}^2 + \norm{\Lambda^{\sfrac{1}{2}} M}_{L^2}^2}\\
 + \left[ \frac{\eta+\zeta}{2} \norm{\Lambda^{\sfrac{1}{2}}\nabla u}_{L^2} +\frac{\eta'}{2} \norm{\Lambda^{\sfrac{1}{2}} \nabla \omega}_{L^2}^2 + 4\zeta \norm{\Lambda^{\sfrac{1}{2}} \omega}_{L^2}^2 \right. \\
  + \left.  \frac{\sigma}{2}\norm{\Lambda^{\sfrac{1}{2}} \nabla M}_{L^2}^2 + \frac{1}{\tau}\pare{\norm{\Lambda^{\sfrac{1}{2}} M}_{L^2}^2 + \frac{\chi_0}{2}\norm{\Lambda^{\sfrac{1}{2}} \cQ M}_{L^2}^2} \right]\\
 \leqslant C \left[ \norm{\nabla u}_{L^2}^2  \norm{\Lambda^{\sfrac{1}{2}} u}_{L^2}^2 
+  \norm{\nabla u}^2_{L^2} \norm{\Lambda^{\sfrac{1}{2}} M}^2_{L^2}  \right.  +
\left. \norm{H}_{L^2}^2 \norm{\Lambda^{\sfrac{1}{2}} M}_{L^2}^2 + \norm{\nabla M}_{L^2}^2 \norm{\Lambda^{\sfrac{1}{2}} u}_{L^2}^2 +\norm{M}_{L^2}^2\norm{\Lambda^{\sfrac{1}{2}} \omega}_{L^2}^2\right] \\
 + \bra{ \norm{\Lambda^{\sfrac{1}{2}} \nabla \cG_F}_{L^2}^2 + \frac{C}{\tau}\norm{\Lambda^{\sfrac{1}{2}} \cG_F}_{L^2}^2+
\norm{\nabla \omega}_{L^2}\norm{\nabla u}_{L^2}} . 
\end{multline}

\noindent With the quantities defined in \eqref{eq:def_c} we can deduce from  equation \eqref{eq:H12est_almostfinal} the following differential inequality
\begin{equation*}
\frac{1}{2} \ \frac{\d}{\d t} \ \cF\pare{t} + c \ \cF_d\pare{t} \leqslant C \cE_d\pare{t} \ \cF\pare{t} + \Phi_\tau \pare{t} +C \cE_d\pare{t} . 
\end{equation*}
where $ \cE_d $ is defined in \eqref{eq:def_Ed} which is an $ L^1 $ function in time thanks to the bounds provided in Lemma \ref{lem:L2_energy_estimates}. An application of Gronwall inequality allows hence to deduce the inequality
\begin{multline}\label{eq:H12_energy_eq_almost_final}
\cF\pare{t} + 2c \int_0^t \exp\set{\int _{t'}^t \cE_d\pare{t''}\d t''} \cF_d \pare{t'} \d t' \leqslant C \  \cF\pare{0} \exp\set{C \int_0^t\cE_d\pare{t'}\d t'} \\
+C \ \int_0^t \exp\set{ C \int _{t'}^t \cE_{d}\pare{t''} \d t''} \bra{ \Big. \Phi_\tau \pare{t'} + \cE_d\pare{t'}}\d t'. 
\end{multline} 

Whence considering the $ L^2 $ energy bound \eqref{eq:L2_energy_bound} we can argue that, for each $ 0\leqslant t'\leqslant t\leqslant T $
\begin{equation*}
1 \leqslant \int _{t'}^t \cE_d\pare{t''}\d t'' \leqslant \frac{1}{\tilde{c}} \ \Psi\pare{U_0, F, \cG_F}, 
\end{equation*}
which in turn implies that
\begin{multline*}
\cF \pare{t} + 2c \int_0^t  \cF_d \pare{t'} \d t'\\ 
 \leqslant C \  \cF\pare{0}\exp\set{\frac{C}{\tilde{c}} \ \Psi\pare{U_0, F, \cG_F}} + C \ \exp\set{\frac{C}{\tilde{c}} \ \Psi\pare{U_0, F, \cG_F}} \norm{\cG_F}_{L^2\pare{\bra{0, T}; H^{\frac{3}{2}}}} \\
 + \frac{C}{\tilde{c}} \ \exp\set{\frac{C}{\tilde{c}} \ \Psi\pare{U_0, F, \cG_F}} \ \Psi\pare{U_0, F, \cG_F}. 
\end{multline*}
In order to deduce \eqref{eq:H_in_Eud} it suffice to apply Lemma \ref{lem:reg_H} and to consider that $ \cG_F $ was considered to be in $ W^{1, \infty}_{\loc}\pare{\bR_+; H^{\sfrac{3}{2}}} $. 
\hfill $ \Box $

\subsection{Proof of the uniqueness statement in Proposition \ref{prop:existence_global_weak_H12_solutions}:} \label{sec:uniquess_weak}
We want to prove here that the solutions constructed in Section \ref{sec:weak_solutions} are unique in the  energy space $ L^\infty_{\loc}\pare{ \bR_+; H^{\sfrac{1}{2}}}\cap L^2_{\loc} \pare{ \bR_+;  H^{\sfrac{3}{2}}} $. In order to do so let us hence denote  $ V=\pare{u, \omega, M} $
 as above and let us write the system \eqref{eq:Rosensweig2D_limit} in the compact form
 \begin{equation}\label{eq:Rosensweig2D_limit_compact}
 \left\lbrace
 \begin{aligned}
 & \partial_t V -\mathcal{L} \ V = B_2\pare{V, V} + B_1\pare{V, V} + L \ V + f_{\text{ext}}, \\
 & \left. V\right|_{t=0} =V_0, 
 \end{aligned}
 \right.
 \end{equation}
 where respectively
 \begin{align*}
 \cL \ V  = \pare{\begin{array}{c}
 \pare{\eta +\zeta} \Delta u \\ \eta' \Delta \omega \\ \sigma \Delta M
 \end{array} }, && 
 L\ V & = \pare{
\begin{array}{c}
\mu_0 \cP \pare{ M\cdot \nabla \cG_F} -2\zeta \ \cP \nabla^\perp \omega\\
\mu_0 \ M\times \cG_F + 2\zeta \pare{ \curl \ \cP u -2\omega}\\
-\frac{1}{\tau}\pare{1+\chi_0 \cQ }M
\end{array} 
}, 
 &&
f_{\text{ext}} = \pare{
\begin{array}{c}
0\\ 0 \\-\frac{1}{\tau}\cG_F
\end{array}
} , 
 \end{align*}
 while the bilinear interactions $ B_2 $ and $ B_1 $ are defined as
 \begin{align*}
 B_2\pare{V, V} & = \pare{ \begin{array}{c} 
- \rho_0\cP  \pare{\cP u \cdot \nabla \cP u}   -\mu_0 \cP \pare{M\cdot \nabla \cQ M} \\
-\rho_0\kappa \cP u\cdot\nabla \omega \\
-\cP u\cdot \nabla M
  \end{array}
 }, &
 B_1\pare{V, V} & = \pare{
\begin{array}{c}
0 \\
- \mu_0 \ M\times \cQ M \\
M^\perp \omega
\end{array} 
 }. 
\end{align*}
And let $ V_i, \ i=1, 2 $ be a global weak solutions of the following Cauchy problem
\begin{equation*}
 \left\lbrace
 \begin{aligned}
 & \partial_t V_i -\mathcal{L} \ V_i = B_2\pare{V_i, V_i} + B_1\pare{V_i, V_i} + L \ V_i + f_{\text{ext}}, \\
 & \left. V_i\right|_{t=0} =V_{i, 0}, 
 \end{aligned}
 \right.
 \end{equation*}
 where $ V_{i, 0}, \ i=1,2  $ belong to $ H^{\sfrac{1}{2}} $. As explained above 
 \begin{align*}
 V_i\in L^\infty_{\loc}\pare{\bR_+; \Hud}, 
 &&
 \nabla V_i\in L^2_{\loc}\pare{\bR_+; \Hud}. 
 \end{align*}
 Let us hence define $ \delta V = V_1-V_2 $ and $ \delta V_0=V_{1, 0}-V_{2, 0} $, then $ \delta V $ solves weakly
 \begin{equation}\label{eq:Rosensweig2D_difference_compact_form}
 \left\lbrace
 \begin{aligned}
 & \partial_t \delta V -\mathcal{L} \ \delta V = B_2\pare{V_1, \delta V} + B_2 \pare{\delta V, V_2} + B_1\pare{V_1, \delta V} + B_1 \pare{\delta V, V_2} + L \ \delta V , \\
 & \left. \delta V\right|_{t=0} =\delta V_0.
 \end{aligned}
 \right.
 \end{equation}
 
 The method we will adopt in order to prove the uniqueness of solutions of \eqref{eq:Rosensweig2D_limit_compact} in $ L^\infty_{\loc}\pare{\bR_+; H^{\sfrac{1}{2}}} \cap L^2_{\loc}\pare{\bR_+; H^{\sfrac{3}{2}}} $ is rather standard and it develops in the following way
 \begin{enumerate}
 \item[$ \diamond $] We perform some $ L^2 $ energy estimates on the system \eqref{eq:Rosensweig2D_difference_compact_form} in order to deduce a bound of the following form
 \begin{equation*}
 \norm{\delta V}_{L^\infty\pare{\bra{0, t}; L^2} }^2  + \norm{\nabla \delta V}_{L^2\pare{\bra{0, t};  L^2} }^2  \leqslant \norm{\delta V_0}_{L^2} f\pare{t},
 \end{equation*}
 for any $ t>0 $ and some $ f\in L^\infty_{\loc}\pare{\bR_+} $. 
 
  \item[$ \diamond $] We perform next some $ \Lambda^{\sfrac{1}{2}} L^2 $ energy estimates always on the system \eqref{eq:Rosensweig2D_difference_compact_form} in order to deduce the energy inequality
  \begin{equation*}
   \norm{\delta V}_{L^\infty\pare{\bra{0, t}; \Lambda^{\sfrac{1}{2}} L^2} }^2 + \norm{\nabla \delta V}_{L^2\pare{\bra{0, t}; \Lambda^{\sfrac{1}{2}} L^2} }^2  \leqslant \norm{\delta V_0}_{\Lambda^{\sfrac{1}{2}} L^2} g\pare{t},
  \end{equation*}
  for any $ t>0 $ and some $ g\in L^\infty_{\loc}\pare{\bR_+} $.
  
  \item[$ \diamond $] Interpolating the two inequalities here above and setting $ \delta V = 0 $ in $ H^{\sfrac{1}{2}} $ we obtain hence that $ \delta V $ has to be identically nil in the space $ L^\infty_{\loc}\pare{\bR_+; H^{\sfrac{1}{2}}} \cap L^2_{\loc}\pare{\bR_+; H^{\sfrac{3}{2}}} $. 
 \end{enumerate}
 
 From now on we use the notation $ \norm{\cdot}_{s} = \norm{\Lambda^s \cdot}_{L^2}, \ \ps{\Lambda^s\cdot}{\Lambda^s\cdot}_{L^2}=\ps{\cdot}{\cdot}_{s} $ for any $ s\in \bR $.

 \subsubsection{Step 1: The $  L^2 $ energy bound :}
 the bound we provide in this first step are relatively simple, hence we will sometimes omit to specify in detail every step required in order to prove them.\\
 
 We multiply the equation \eqref{eq:Rosensweig2D_difference_compact_form} for $ \delta V $ and we integrate in space in order to deduce the differential inequality
 \begin{multline}\label{eq:ineq_unicity_-3}
 \frac{1}{2}\ \frac{\d}{\d t} \norm{\delta V}_{L^2}^2 + c\norm{\nabla\delta V}_{L^2}^2 \leqslant
 \av{\ps{ B_2\pare{V_1, \delta V}}{\delta V}_{L^2}}
 + \av{\ps{ B_2\pare{\delta V, V_2}}{\delta V}_{L^2}}\\
 + \av{\ps{ B_1\pare{V_1, \delta V}}{\delta V}_{L^2}}
 + \av{\ps{ B_1\pare{\delta V, V_2}}{\delta V}_{L^2}}
 +\av{\ps{L \ \delta V}{\delta V}_{L^2}}. 
\end{multline} 

The following estimates are immediate to deduce
\begin{equation}\label{eq:ineq_unicity_-2}
\begin{aligned}
\av{\ps{ B_2\pare{V_1, \delta V}}{\delta V}_{L^2}} & \leqslant  \norm{V_1}_{L^\infty} \norm{ \delta V}_{L^2}\norm{\nabla \delta V}_{L^2} , \\
& \leqslant \alpha \norm{\nabla \delta V}_{L^2}^2 + \frac{C}{\alpha}    \norm{V_1}_{H^{\sfrac{3}{2}}}^2 \norm{ \delta V}_{L^2}^2, \\
\av{\ps{ B_2\pare{\delta V, V_2}}{\delta V}_{L^2}} & \leqslant \norm{ \nabla V_2}_{L^2}\norm{\delta V}_{L^4}^2, \\
& \leqslant \alpha \norm{\nabla \delta V}_{L^2}^2 + \frac{C}{\alpha}  \norm{\nabla V_2}_{L^2}^2\norm{\delta V}_{L^2}^2, \\
\av{\ps{ B_1\pare{V_1, \delta V}}{\delta V}_{L^2}}
 + \av{\ps{ B_1\pare{\delta V, V_2}}{\delta V}_{L^2}} & \leqslant \pare{\norm{V_1}_{L^2} + \norm{V_2}_{L^2}}\norm{\delta V}_{L^4}^2, \\
 & \leqslant \alpha \norm{\nabla\delta V}_{L^2}^2 + \frac{C}{\alpha} \pare{\norm{V_1}_{L^2}^2 + \norm{V_2}_{L^2}^2 } \norm{\delta V}_{L^2}^2, \\
 \av{\ps{L \ \delta V}{\delta V}_{L^2}} & \leqslant \alpha \norm{\nabla\delta V}_{L^2}^2 + \frac{C}{\alpha}\norm{\delta V}_{L^2}^2. 
\end{aligned}
\end{equation}

Whence considering the estimates \eqref{eq:ineq_unicity_-2} in \eqref{eq:ineq_unicity_-3}, selecting an $0<4 \alpha<\sfrac{c}{2} $ and applying a Gronwall inequality we can deduce the following rather crude bound, for any $ t>0 $
\begin{equation*}
\norm{\delta V\pare{t}}_{L^2}^2 + \frac{c}{2} \int_0^t \norm{\nabla \delta V\pare{\tau}}_{L^2}^2\d \tau \leqslant C \norm{\delta V_0}_{L^2}^2 \exp\set{C \int_0^t \phi\pare{\tau}\d \tau},
\end{equation*}
where
\begin{equation*}
\phi = \pare{1+ \norm{V_1}_{L^2}^2 + \norm{V_2}_{L^2}^2} \pare{1+ \norm{  V_1}_{H^{\sfrac{3}{2}}}^2 + \norm{  V_2}_{H^{\sfrac{3}{2}}}^2 }.
\end{equation*}

\noindent The function $ \phi \in L^1_{\loc}\pare{\bR_+} $ thanks to the results proved in Section \ref{sec:weak_solutions}, whence we conclude the proof of the first step.

 \subsubsection{Step 2: The $ \Lambda^{\sfrac{1}{2}} L^2 $ energy bound :}
 With a procedure which is now familiar we multiply \eqref{eq:Rosensweig2D_difference_compact_form} for $ \Lambda \delta V $ and integrate in space in order to deduce the energy inequality 
 \begin{multline}\label{eq:ineq_unicity_0}
 \frac{1}{2}\ \frac{\d}{\d t} \norm{\delta V}_{\sfrac{1}{2}}^2 + c\norm{\nabla\delta V}_{\sfrac{1}{2}}^2 \leqslant
 \av{\ps{ B_2\pare{V_1, \delta V}}{\delta V}_{\sfrac{1}{2}}}
 + \av{\ps{ B_2\pare{\delta V, V_2}}{\delta V}_{\sfrac{1}{2}}}\\
 + \av{\ps{ B_1\pare{V_1, \delta V}}{\delta V}_{\sfrac{1}{2}}}
 + \av{\ps{ B_1\pare{\delta V, V_2}}{\delta V}_{\sfrac{1}{2}}}
 +\av{\ps{L \ \delta V}{\delta V}_{\sfrac{1}{2}}}. 
\end{multline}  
Since as far as concerns energy estimates we can identify the bilinear form $ B_2 $ with the transport form up to a constant we will do so in order to simplify the notation of the proof. We consider at firs the term $ \av{ \ps{ B_2\pare{V_1, \delta V}}{\delta V}_{\sfrac{1}{2}}} $ which, as explained we identify with $ \av{ \ps{ {V_1\cdot \nabla \delta V}}{\delta V}_{\sfrac{1}{2}}} $. With a standard integration by parts we argue that
\begin{equation*}
\av{ \ps{ {V_1\cdot \nabla \delta V}}{\delta V}_{\sfrac{1}{2}}} \leqslant \av{ \ps{ {\nabla  V_1 \otimes  \delta V}}{\delta V}_{\sfrac{1}{2}}} + \av{ \ps{ { V_1 \otimes \delta V}}{\nabla \delta V}_{\sfrac{1}{2}}}.
\end{equation*}

\noindent We analyze at first the term $ \av{ \ps{ {\nabla \ V_1 \otimes  \delta V}}{\delta V}_{\sfrac{1}{2}}}  $, since the operator $ \Lambda^{\sfrac{1}{2}} $ is self-adjoint in $ L^2 $ we argue that
\begin{align*}
\av{ \ps{ {\nabla \ V_1 \otimes  \delta V}}{\delta V}_{\sfrac{1}{2}}} & = 
\av{ \int \nabla \ V_1 \otimes \delta V \ \Lambda \delta V \dx }, \\
&\leqslant \norm{ \nabla \ V_1}_{L^2} \norm{\delta V}_{L^4} \norm{\nabla \delta V}_{L^4}, 
\end{align*}
while since $ \Lambda^{\sfrac{1}{2}} L^2 $ embeds continuously in $ L^4 $ we deduce that
\begin{align}
\label{eq:ineq_unicity_0}
\av{ \ps{ {\nabla \ V_1 \otimes  \delta V}}{\delta V}_{\sfrac{1}{2}}} &\leqslant  \alpha \norm{\nabla\delta V}_{\sfrac{1}{2}}^2 + \frac{C}{\alpha}\norm{\nabla V_1}_{L^2}^2\norm{\delta V}_{\sfrac{1}{2}}^2. 
\end{align}

Using Lemma \ref{lem:product rule} instead
\begin{equation}
\label{eq:ineq_unicity_1}
\begin{aligned}
\av{ \ps{ { V_1 \otimes \delta V}}{\nabla \delta V}_{\sfrac{1}{2}}} & \lesssim \norm{V_1 \otimes \delta V}_{\sfrac{1}{2}} \norm{\nabla \delta V}_{\sfrac{1}{2}}, \\
& \lesssim \pare{\norm{V_1}_{L^\infty}\norm{\delta V}_{\sfrac{1}{2}} + \norm{V_1}_{\sfrac{1}{2}}\norm{\delta V}_{L^\infty} }\norm{\nabla \delta V}_{\sfrac{1}{2}}.
\end{aligned}
\end{equation}
Since $ H^{\sfrac{3}{2}}\hra L^\infty $ we deduce that
\begin{equation}
\label{eq:ineq_unicity_2}
\norm{V_1}_{L^\infty} \lesssim \norm{V_1}_{H^{\sfrac{3}{2}}}, 
\end{equation}
while using \eqref{eq:interpolation_inequality} and the inequality $ \norm{v}_{\sfrac{1}{2}}\leqslant C \norm{v}_{L^2}^{\sfrac{1}{2}} \norm{ \nabla v}_{L^2}^{\sfrac{1}{2}} $ we deduce 
\begin{equation}
\label{eq:ineq_unicity_3}
\norm{V_1}_{\sfrac{1}{2}}\norm{\delta V}_{L^\infty} \leqslant C \norm{V_1}_{L^2}^{\sfrac{1}{2}} \norm{ \nabla V_1}_{L^2}^{\sfrac{1}{2}} \norm{\delta V}_{\sfrac{1}{2}}^{\sfrac{1}{2}}\norm{\nabla\delta V}_{\sfrac{1}{2}}^{\sfrac{1}{2}}. 
\end{equation}
Whence with \eqref{eq:ineq_unicity_2} and \eqref{eq:ineq_unicity_3} the inequality \eqref{eq:ineq_unicity_1} becomes
\begin{equation}\label{eq:ineq_unicity_init}
\begin{aligned}
\av{ \ps{ { V_1 \otimes \delta V}}{\nabla \delta V}_{\sfrac{1}{2}}} & \leqslant C \pare{\norm{V_1}_{H^{\sfrac{3}{2}}}\norm{\delta V}_{\sfrac{1}{2}} + \norm{V_1}_{L^2}^{\sfrac{1}{2}} \norm{ \nabla V_1}_{L^2}^{\sfrac{1}{2}} \norm{\delta V}_{\sfrac{1}{2}}^{\sfrac{1}{2}}\norm{\nabla\delta V}_{\sfrac{1}{2}}^{\sfrac{1}{2}} }\norm{\nabla \delta V}_{\sfrac{1}{2}}, \\
&\leqslant \alpha \norm{\nabla \delta V}_{\sfrac{1}{2}}^2 +\frac{C}{\alpha} \pare{\norm{V_1}_{H^{\sfrac{3}{2}}}^2 + \norm{V_1}_{L^2}^{2} \norm{ \nabla V_1}_{L^2}^{2}} \norm{\delta V}_{\sfrac{1}{2}}^{2}. 
\end{aligned}
\end{equation}

The term $ {\ps{ B_2\pare{\delta V, V_2}}{\delta V}_{\sfrac{1}{2}}} $, on which we perform the identification  $ B_2\pare{\delta V, V_2}\sim \delta V\cdot \nabla V_2 $, can be bounded as
\begin{equation}
\begin{aligned}
\av{\ps{\delta V\cdot \nabla V_2}{\delta V}_{\sfrac{1}{2}}} & \leqslant C \norm{\delta V\cdot \nabla V_2}_{-\sfrac{1}{2}} \norm{\nabla\delta V}_{\sfrac{1}{2}}, \\
& \leqslant C \norm{\nabla V_2}_{L^2} \norm{\delta V}_{\sfrac{1}{2}}\norm{\nabla\delta V}_{\sfrac{1}{2}}, \\
& \leqslant \alpha \norm{\nabla\delta V}_{\sfrac{1}{2}}^2 +\frac{C}{\alpha} \norm{\nabla V_2}_{L^2}^2 \norm{\delta V}_{\sfrac{1}{2}}^2. 
\end{aligned}
\end{equation}

Next we consider the bilinear interactions generated by $ B_1 $ (which we identify as $ B_1\pare{A, B}\sim A\otimes B $ for the energy estimates). The following estimates are immediate
\begin{equation}
\begin{aligned}
\av{\ps{V_1\otimes \delta V}{\delta V}_{\sfrac{1}{2}}} & \leqslant \norm{V_1\otimes \delta V}_{- \sfrac{1}{2}} \norm{\nabla\delta V}_{\sfrac{1}{2}} , \\
& \leqslant \alpha \norm{\nabla\delta V}_{\sfrac{1}{2}}^2 + \frac{C}{\alpha} \norm{V_1}_{L^2}^2 \norm{\delta V}_{\sfrac{1}{2}}^2, \\
\av{\ps{\delta V\otimes \delta V_2}{\delta V}_{\sfrac{1}{2}}} & \leqslant \alpha \norm{\nabla\delta V}_{\sfrac{1}{2}}^2 + \frac{C}{\alpha} \norm{V_1}_{L^2}^2 \norm{\delta V}_{\sfrac{1}{2}}^2. 
\end{aligned}
\end{equation}

Lastly we assert that we can bound
\begin{equation}\label{eq:ineq_unicity_fin}
\av{\ps{L\ \delta V}{\delta V}_{\sfrac{1}{2}}} \leqslant \alpha \norm{\nabla \delta V}_{\sfrac{1}{2}}^2 + \frac{ C}{\alpha} \norm{ \delta V}_{\sfrac{1}{2}}^2. 
\end{equation}

At this point considering the bounds \eqref{eq:ineq_unicity_0},  \eqref{eq:ineq_unicity_init}--\eqref{eq:ineq_unicity_fin} in \eqref{eq:ineq_unicity_0} and selecting $ \alpha > 0 $ sufficiently small so that $ c-5\alpha \geqslant \hat{c} > 0 $ we can deduce the inequality
\begin{equation*}
\frac{1}{2} \ \frac{\d}{\d t} \ \norm{ \delta V\pare{t}}_{\sfrac{1}{2}}^2 + \hat{c} \norm{\nabla \delta  V\pare{t}}_{\sfrac{1}{2}}^2 \leqslant C_{\alpha} \ h\pare{t} \norm{\delta V\pare{t}}_{\sfrac{1}{2}}^2, 
\end{equation*}
where
\begin{equation*}
h\pare{t} =  \pare{1 +\norm{V_1\pare{t}}_{L^2}^2 + \norm{V_2\pare{t}}_{L^2}^2 } \pare{1 + \norm{V_1\pare{t}}_{H^{\sfrac{3}{2}}}^2 + \norm{V_2\pare{t}}_{H^{\sfrac{3}{2}}}^2 }, 
\end{equation*}
whence applying a Gronwall inequality we deduce
\begin{equation*}
\norm{\delta V \pare{t}}_{\sfrac{1}{2}}^2 + 2\hat{c} \int_0^t \norm{\nabla\delta  V \pare{t'}}_{\sfrac{1}{2}}^2 \d t' \leqslant C_\alpha \norm{\delta V_0}_{\sfrac{1}{2}}^2 \exp \set{\int_0^t h \pare{t'}\d t' }.  
\end{equation*}

Thanks to the results proved in Section \ref{sec:weak_solutions} we know hence that $ h\in L^1_{\loc}\pare{\bR_+} $, whence we deduce the uniqueness of the weak solutions in the energy space $ L^\infty_{\loc}\pare{ \bR_+; H^{\sfrac{1}{2}}}\cap L^2_{\loc} \pare{ \bR_+;  H^{\sfrac{3}{2}}} $, concluding. \hfill $ \Box $

\subsection{Proof of Lemma \ref{lem:higher_energy_est_transport_term}}\label{sec:pf_lem_higher_energy_est_transport_term}
Let us recall that the following Leibniz rule applies
\begin{equation*}
\partial^\alpha\pare{a\cdot \nabla b} = \sum_{{\beta}+ {\gamma} = {\alpha}} \kappa_{ \beta, \gamma}\ \partial^\beta a \cdot \nabla \partial^\gamma b, 
\end{equation*}
where the $ \kappa_{\beta, \gamma} $ are positive and finite and their explicit value is irrelevant in our context. \\
Whence
\begin{equation}\label{eq:Leibniz_rule}
\av{ \int \pa \pare{a\cdot \nabla b} \ \pa c \ \dx} \leqslant 
\sum_{{\beta}+ {\gamma} = {\alpha}} \kappa_{ \beta, \gamma}\ 
\av{ \int \partial^\beta a \cdot \nabla \partial^\gamma b \ \cdot \ \pa c \ \dx}. 
\end{equation}

We divide now the right hand side of the above inequality in three cases
\begin{itemize}
\item If $ \av{\beta}, \av{\gamma} \geqslant 1 $ we can bound the right hand side of \eqref{eq:Leibniz_rule}, restricted on such set,  as 
\begin{multline*}
\sum_{\substack{{\beta}+ {\gamma} = {\alpha}\\ \av{\beta}, \av{\gamma}\geqslant 1}} \kappa_{\alpha, \beta}\ 
\av{ \int \partial^\beta a \cdot \nabla \partial^\gamma b \ \cdot \ \pa c \ \dx}\\
\begin{aligned}
&\leqslant
 \sum_{\substack{{\beta}+ {\gamma} = {\alpha}\\ \av{\beta}, \av{\gamma}\geqslant 1}} \kappa_{ \beta, \gamma} \norm{\partial^\beta a}_{L^2}^{\sfrac{1}{2}}\norm{\nabla\partial^\beta a}_{L^2}^{\sfrac{1}{2}}
\norm{\partial^\alpha c}_{L^2}^{\sfrac{1}{2}}\norm{\nabla\partial^\alpha c}_{L^2}^{\sfrac{1}{2}}\norm{\nabla\partial^\gamma b}_{L^2}, \\
& \leqslant \sum_{\substack{{\beta}+ {\gamma} = {\alpha}\\ \av{\beta}, \av{\gamma}\geqslant 1}} \pare{
\kappa_{\beta, \gamma}^4 \tilde{\epsilon} \norm{\nabla\partial^\alpha c}_{L^2}^{2}
+ \frac{C}{\tilde{\epsilon}}
\norm{\partial^\beta a}_{L^2}^{2}\norm{\nabla\partial^\beta a}_{L^2}^{2}
\norm{\partial^\alpha c}_{L^2}^{2}
+ \frac{C}{\tilde{\epsilon}}
\norm{\nabla\partial^\gamma b}_{L^2}^2
}, \\
&
\begin{multlined}
\leqslant \pare{\sum_{\substack{{\beta}+ {\gamma} = {\alpha}\\ \av{\beta}, \av{\gamma}\geqslant 1}}\kappa_{\beta, \gamma}^4} \tilde{\epsilon}\norm{\nabla\partial^\alpha c}_{L^2}^{2}  
+\frac{C}{\tilde{\epsilon}}
\pare{\sum_{\substack{1\leqslant \av{\beta}< k}}\norm{\partial^\beta a}_{L^2}^{2}\norm{\nabla\partial^\beta a}_{L^2}^{2}
}\norm{ \partial^\alpha c}_{L^2}^{2}  
\\
+\frac{C}{\tilde{\epsilon}}
\pare{ \sum_{\substack{1\leqslant \av{\gamma}< k}} \norm{\nabla\partial^\gamma b}_{L^2}^2}. 
\end{multlined}
\end{aligned}
\end{multline*}

Accordingly to the hypothesis \eqref{eq:technical_bound} we can assert that there exist a constant $ c_k $ depending only on the length of the multi-index $  \alpha $ such that
\begin{align*}
\sum_{\substack{1\leqslant \av{\beta}< k}}\norm{\partial^\beta a}_{L^2}^{2}\norm{\nabla\partial^\beta a}_{L^2}^{2} & \leqslant c_k \sum_{\ell=1}^{k-1}F_\ell^2 G_\ell^2 , \\
\sum_{\substack{1\leqslant \av{\gamma}< k}} \norm{\nabla\partial^\gamma b}_{L^2}^2 & \leqslant c_k \sum_{\ell=1}^{k-1} G_\ell^2. 
\end{align*}

Whence since the values $ \kappa_{\beta, \gamma} $ are finite, for any $ \epsilon > 0 $ we can select a $  \tilde{\epsilon} $ sufficiently small, which again depends on $ k $ only,  so that
\begin{equation*}
\pare{\sum_{\substack{{\beta}+ {\gamma} = {\alpha}\\ \av{\beta}, \av{\gamma}\geqslant 1}}\kappa_{\beta, \gamma}^4} \tilde{\epsilon} < \frac{\epsilon}{8},
\end{equation*}

We hence proved that
\begin{equation} \label{eq:tecnical_bound_1}
\sum_{\substack{\av{\beta}+ \av{\gamma} = \av{\alpha}\\ \av{\beta}, \av{\gamma}\geqslant 1}} \kappa_{\alpha, \beta}\ 
\av{ \int \partial^\beta a \cdot \nabla \partial^\gamma b \ \cdot \ \pa c \ \dx} 
\leqslant  \frac{\epsilon}{8} \ \norm{\nabla\partial^\alpha c}_{L^2}^{2}  + \frac{C c_k}{\epsilon} \pare{\sum_{\ell=1}^{k-1}F_\ell^2 G_\ell^2} \norm{ \partial^\alpha c}_{L^2}^{2}
+ \frac{C c_k}{\epsilon} \sum_{\ell=1}^{k-1} G_\ell^2. 
\end{equation}

\item If $ \beta = 0 $ and $ \gamma = \alpha $ the right-hand side of \eqref{eq:Leibniz_rule} restricted in such set can be bounded as
\begin{equation}
\label{eq:bound_0alpha_1}
\begin{aligned}
\kappa_{0, \alpha} \av{\int a \cdot \nabla \pa b \ \cdot \ \pa c \ \dx} & \leqslant \kappa_{0, \alpha} \norm{a}_{L^2}^{\sfrac{1}{2}} \norm{\nabla a}_{L^2}^{\sfrac{1}{2}} \norm{\nabla \pa b}_{L^2}  \norm{\pa c}_{L^2}^{\sfrac{1}{2}} \norm{\nabla \pa c}_{L^2}^{\sfrac{1}{2}} , \\
& \leqslant \pare{ \kappa_{0, \alpha}^4 \tilde{\epsilon} \norm{\nabla \pa c}_{L^2}^2 + \tilde{\epsilon}\norm{\nabla \pa b}_{L^2}^2} + \frac{C}{\tilde{\epsilon}} \norm{a}_{L^2}^{2} \norm{\nabla a}_{L^2}^{2} \norm{\pa c}_{L^2}^{2}. 
\end{aligned}
\end{equation}

Thanks to the hypothesis \eqref{eq:technical_bound} we assert that
\begin{equation*}
\norm{a}_{L^2}^{2} \norm{\nabla a}_{L^2}^{2}\leqslant F_0^2 G_0^2, 
\end{equation*}
and moreover we can choose $ \tilde{\epsilon} $ sufficiently small so that $ 2\max \set{\kappa_{0, \alpha}^4, 1}\tilde{\epsilon}<\dfrac{\epsilon}{8} $, whence \eqref{eq:bound_0alpha_1} becomes
\begin{equation}\label{eq:tecnical_bound_2}
\kappa_{0, \alpha} \av{\int a \cdot \nabla \pa b \ \cdot \ \pa c \ \dx} \leqslant \frac{\epsilon}{8} \pare{ \norm{\nabla \pa c}_{L^2}^2 + \norm{\nabla \pa b}_{L^2}} + \frac{C}{{\epsilon}} F_0^2 G_0^2 \norm{\pa c}_{L^2}^{2}.
\end{equation}

\item If $ \beta =\alpha $ and $ \gamma =0 $ the right hand side of \eqref{eq:Leibniz_rule} becomes
\begin{equation*}
\begin{aligned}
\kappa_{ \alpha , 0 } \av{\int \pa a \cdot \nabla  b \ \cdot \ \pa c \ \dx} & \leqslant \norm{\pa a}_{L^2}^{\sfrac{1}{2}} \norm{\nabla \pa a}_{L^2}^{\sfrac{1}{2}} \norm{\nabla b}_{L^2} \norm{\pa c}_{L^2}^{\sfrac{1}{2}} \norm{\nabla \pa c}_{L^2}^{\sfrac{1}{2}}, \\
& \leqslant
 \tilde{\epsilon} \pare{ \norm{\nabla \pa a}_{L^2}^{2} + \norm{\nabla \pa c}_{L^2}^{2} } + \frac{C}{\tilde{\epsilon}}\norm{\nabla b}_{L^2}^2
 \pare{ \norm{ \pa a}_{L^2}^{2} + \norm{ \pa c}_{L^2}^{2} }, 
\end{aligned}
\end{equation*}
Whence considering the hypothesis \eqref{eq:technical_bound} and selecting a $ \tilde{\epsilon} $ sufficiently small we deduce
\begin{equation}\label{eq:tecnical_bound_3}
\kappa_{ \alpha , 0 } \av{\int \pa a \cdot \nabla  b \ \cdot \ \pa c \ \dx}
\leqslant 
\frac{\epsilon}{8} \pare{ \norm{\nabla \pa a}_{L^2}^{2} + \norm{\nabla \pa c}_{L^2}^{2} } + \frac{C}{{\epsilon}} \ G_0^2
 \pare{ \norm{ \pa a}_{L^2}^{2} + \norm{ \pa c}_{L^2}^{2} }. 
\end{equation}

\end{itemize}

Summing hence \eqref{eq:tecnical_bound_1}, \eqref{eq:tecnical_bound_2} and \eqref{eq:tecnical_bound_3}, and setting $ C_k \gg C c_k $ we deduce the inequality \eqref{eq:bound_Leibniz_rule}. 
\hfill $ \Box $

\subsection{Proof of Lemma \ref{lem:higher_energy_est_bilinear_term}}
\label{sec:pf_lem_higher_energy_est_bilinear_term}
Again as in the proof of Lemma \ref{lem:higher_energy_est_transport_term} we exploit Leibniz formula
\begin{equation} \label{eq:bilinear_bound_high_energy_1}
\av{\int \pa \pare{a \  b} \ \pa c \ \dx}  = 
\sum_{{\beta} +\gamma = \alpha} \kappa_{\beta, \gamma} \av{\int \partial^\beta a \ \partial^\gamma b \ \pa c \ \dx}, 
\end{equation}
and we divide the proof in three cases:
\begin{itemize}
\item Suppose $ \av{\beta}, \av{\gamma} \geqslant 1 $, whence the right hand side of \eqref{eq:bilinear_bound_high_energy_1} restricted on such set can be bounded as
\begin{align*}
\sum_{{\beta} +\gamma = \alpha} \kappa_{\beta, \gamma} \av{\int \partial^\beta a \ \partial^\gamma b \ \pa c \ \dx} & \leqslant  \sum_{{\beta} +\gamma = \alpha} \kappa_{\beta, \gamma} \norm{\partial^\beta a}_{L^4} \norm{\partial^\gamma b}_{L^4} \norm{\pa c}_{L^2}, \\
& \leqslant \sum_{{\beta} +\gamma = \alpha} \kappa_{\beta, \gamma} \norm{\partial^\beta a}_{L^2}^{\sfrac{1}{2}} \norm{\nabla \partial^\beta a}_{L^2}^{\sfrac{1}{2}} \norm{\partial^\gamma b}_{L^2}^{\sfrac{1}{2}}\norm{\nabla\partial^\gamma b}_{L^2}^{\sfrac{1}{2}} \norm{\pa c}_{L^2},
\end{align*}
while using the hypothesis \ref{eq:technical_bound} we deduce
\begin{equation}\label{eq:bilinear_bound_high_energy_2}
\sum_{{\beta} +\gamma = \alpha} \kappa_{\beta, \gamma} \av{\int \partial^\beta a \ \partial^\gamma b \ \pa c \ \dx} 
\leqslant
C_k 
\pare{ 
\sum_{\ell = 1}^{k-1} G_{\ell-1}^{\sfrac{1}{2}} G_{\ell}^{\sfrac{1}{2}} G_{k-\ell-1}^{\sfrac{1}{2}} G_{k-\ell}^{\sfrac{1}{2}}
} \norm{\pa c}_{L^2} . 
\end{equation}

\item If $ \beta =\alpha $ and $ \gamma=0 $ then
\begin{align*}
\av{\int \pa a \ b \ \pa c \ \dx} & \leqslant \norm{\pa a}_{L^4} \norm{b}_{L^2} \norm{\pa c}_{L^4},  \\
& \leqslant \frac{C}{\tilde{\epsilon}} \  \norm{\pa a}_{L^2} \norm{\pa c}_{L^2} \norm{b}_{L^2}^2+ \tilde{\epsilon} \pare{\norm{\nabla \pa a}_{L^2}^{2} + \norm{\nabla \pa c}_{L^2}^{2}} ,
\end{align*}
and since
\begin{equation*}
\norm{\pa a}_{L^2} \norm{\pa c}_{L^2} \leqslant \norm{\nabla a}_{H^{k-1}} \norm{\nabla c}_{H^{k-1}} \leqslant G_{k-1}^2, 
\end{equation*}
we deduce
\begin{equation}\label{eq:bilinear_bound_high_energy_3}
\av{\int \pa a \ b \ \pa c \ \dx} \leqslant
\tilde{\epsilon} \pare{\norm{\nabla \pa a}_{L^2}^{2} + \norm{\nabla \pa c}_{L^2}^{2}}
+
\frac{C}{\tilde{\epsilon}} \  G_{k-1}^2 \norm{b}_{L^2}^2. 
\end{equation}

\item If $ \beta =0 $ and $ \gamma=\alpha $ then the bound we look for is the following
\begin{equation}\label{eq:bilinear_bound_high_energy_4}
\av{\int  a \ \pa b \ \pa c \ \dx} \leqslant
\tilde{\epsilon} \pare{\norm{\nabla \pa b}_{L^2}^{2} + \norm{\nabla \pa c}_{L^2}^{2}}
+
\frac{C}{\tilde{\epsilon}} \  G_{k-1}^2 \norm{a}_{L^2}^2, 
\end{equation}
and whose proof is identical but symmetric of the one performed above, and hence omitted. 
\end{itemize}

Inserting the results given in \eqref{eq:bilinear_bound_high_energy_2}, \eqref{eq:bilinear_bound_high_energy_3} and \eqref{eq:bilinear_bound_high_energy_4} in \eqref{eq:bilinear_bound_high_energy_1}, selecting $ \tilde{\epsilon} $ sufficiently small and $ C $ sufficiently large we deduce \eqref{eq:bilinear_bound_high_energy}. 
\hfill $ \Box $

\footnotesize{
\providecommand{\bysame}{\leavevmode\hbox to3em{\hrulefill}\thinspace}
\providecommand{\MR}{\relax\ifhmode\unskip\space\fi MR }
\providecommand{\MRhref}[2]{%
  \href{http://www.ams.org/mathscinet-getitem?mr=#1}{#2}
}
\providecommand{\href}[2]{#2}

}

\end{document}